\documentclass[11pt,a4paper]{amsart}
\usepackage[utf8]{inputenc}
\usepackage[english]{babel}
\usepackage[left=2cm,right=2cm,top=2cm,bottom=2cm]{geometry}
\usepackage{graphicx}
\usepackage{amssymb}
\usepackage{amsmath}
\usepackage{amsthm}
\usepackage{algorithm}
\usepackage{algorithmic}
\usepackage{comment}
\usepackage{pdfpages}
\usepackage{appendix}
\usepackage{stmaryrd}
\usepackage{tikz}
\usepackage{tikz-cd}
\usetikzlibrary{positioning}
\usepackage{makecell}
\usepackage{mathtools}
\usepackage{hyperref}

\usepackage{bbm}
\usepackage[maxbibnames=50, style=alphabetic,backend=bibtex]{biblatex}

\usepackage{xcolor}
\definecolor{darkgreen}{RGB}{0,100,0}
\definecolor{pslblue}{RGB}{36, 56, 141}
\definecolor{bordeau}{rgb}{0.5,0,0}

\usepackage{csquotes}
\usepackage{hyperref}
\hypersetup{
  colorlinks   = true, 
  linkcolor    = pslblue,
  citecolor    = bordeau,
  urlcolor = bordeau      
}

\providecommand{\ed}{\mathrm e}
\providecommand{\diff}{\mathrm d}

\providecommand{\R}{\mathbb R}

\providecommand{\U}{\mathrm U}
\providecommand{\SU}{\mathrm {SU}}
\providecommand{\Vol}{\operatorname{Vol}}
\providecommand{\Leb}{\operatorname{Leb}}
\providecommand{\honey}{\texttt{\textsc{HONEY}}}
\providecommand{\dualhive}{\texttt{\textsc{DH}}}
\providecommand{\indic}{\mathbbm 1} 
\DeclareMathOperator{\interior}{int}

\numberwithin{equation}{section}
\newtheorem{theorem}{Theorem}[section]
\newtheorem{proposition}[theorem]{Proposition}
\newtheorem{corollary}[theorem]{Corollary}
\newtheorem{lemma}[theorem]{Lemma}
\theoremstyle{definition}
\newtheorem{definition}[theorem]{Definition}

\newtheorem{remark}[theorem]{Remark}

\title{A positive formula for volumes of moduli spaces of flat unitary connections on compact surfaces}
\author{Quentin François, David García-Zelada, Thierry Lévy, Pierre Tarrago}

\begin{document}

\begin{abstract}
We provide a manifestly positive expression for the volume of moduli spaces of flat $\U(n)$-valued connections on punctured compact oriented surfaces. This volume is obtained by summing volumes of explicit polytopes describing coloured honeycombs on a polygon, in the spirit of the work of Knutson and Tao describing the spectrum of the sum of two hermitian matrices. As a corollary, we also provide a positive formula for marginals of the $\U(n)$-valued Yang-Mills measure on a compact oriented surface in terms of the probability distribution of an explicit path process.
\end{abstract}

\maketitle

\section{Introduction}

Moduli spaces of flat unitary connections have been introduced by Narasimhan and Seshadri \cite{Narasimhan_Seshadri,donaldson1983new} 
as a way to classify vector bundles on a compact Riemann surface. As highlighted in the work of Atiyah and Bott \cite{Atiyah_Bott}, 
these  moduli spaces are deeply related to the $\U(n)$-valued Yang-Mills measure on the corresponding surface. 
The latter is a random $\U(n)$-valued connection weighted by its curvature and depending on a temperature parameter. As the temperature vanishes, the only surviving connections are the ones with zero curvature and the zero-limit temperature of the Yang-Mills functional yields a natural symplectic volume form on the moduli 
space of flat connections.

Since the paper of Atiyah and Bott, there have been several works pursuing the goal of a better understanding of either the Yang-Mills 
measure or its zero temperature limit, the volume form on flat connections. Let us mention in particular two pioneering directions that appeared around 1990. 
The first one, initiated by Verlinde \cite{Verlinde}, expresses the volume as the limit of properly normalized dimensions of vector spaces of holomorphic sections on the moduli space. 
This expression is clearly positive as a limit of positive numbers. 
There is an explicit formula 
to compute these integers, originally due to Verlinde and later proved by Bismut and Labourie \cite{bismut5symplectic}, involving sums of complex numbers in the case of the three-holed sphere. However, until recently \cite{Buch_Kresch_Purbhoo_Tamvakis}, no manifestly positive formula existed for the dimension of such spaces.

The second work, due to Witten \cite{witten1992two}, reduces the case of an arbitrary compact oriented surface to a three-holed sphere 
by exploiting a decomposition of pants of the surface. 
Such gluing procedure has since been rigorously proved several times, see \cite{jeffrey1998intersection} and \cite{Meinrenken_Woodward}. 
Using this method, Witten successfully obtained a general formula for the volume of flat connections as a series of characters of the unitary group. 
This first result, which is a series of complex numbers, has been improved to an alternating sum of positive numbers by Jeffrey \cite{jeffrey1994extended} 
and then Meinrenken and Woodward \cite{Meinrenken_Woodward}.

In the present paper, we provide a positive formula for the volume of such moduli spaces as  a sum of volumes of explicit finite dimensional polytopes. Denote by $M_{g,p}(\alpha_1,\ldots,\alpha_p)$ the moduli space of flat unitary connections on an oriented punctured compact surface of genus $g$ with $p$ removed points (in a necessarily trivial $\U(n)$-principal bundle), such that the holonomy around the $i$-th point belong to an orbit $\alpha_i\subset \U(n)$ which is supposed to be non-degenerated. Then, our result may be informally stated as follows, see Theorem \ref{th:Z_g_p_0} for a precise statement:
$$\Vol\left[M_{n,p}(\alpha_1,\ldots,\alpha_p)\right]=\left(\frac{2^{(n+1)[2]}(2\pi)^{(n-1)(n-2)}}{n!n}\right)^{2g+p-2}\frac{n}{\prod_{j=1}^{p} \Delta(\alpha_j)}\sum_{G\in \mathcal{G}^{(g,p)}}\Vol\left[P^G_{\alpha_1,\ldots,\alpha_p}\right],$$
where $\Delta$ is an explicit function involving a Vandermonde determinant and $\{P^G, G\in \mathcal{G}^{(g,p)}\}$ is a collection of explicit polytopes indexed by a family of graphs $\mathcal{G}^{(g,p)}$ and whose boundaries depend on the values of $\alpha_1,\ldots,\alpha_p$. These polytopes consist of coordinates of a collection of segments on a polygonal surface, see Figure \ref{fig:ex_g2_p_3} for a typical example of such configurations. The surface itself simply reflects the pants decomposition 
of the original punctured oriented surface, while the collection of segments is a colored generalization of the honeycomb model introduced by 
Knutson and Tao \cite{knutson2004honeycomb} to describe the spectrum of the sum of two hermitian matrices with prescribed spectra. 
The fact that our model is a generalization of \cite{knutson2004honeycomb} should not be a surprise, since the solution of the eigenvalue problem 
for the sum of hermitian matrices yields a particular subexample of a flat connection on a three-holed sphere, see \cite{Klyachko, belkale2008quantum}. 
Actually, this subexample has been used to get the previously mentioned signed formulas for the volume of the moduli spaces on the three-holed sphere, see \cite{Meinrenken_Woodward}.

The proof of the present result is mainly based on the positive formula given in \cite{françois2024positiveformulaproductconjugacy} in the case of the three-holed sphere. 
Roughly speaking, this formula had been obtained by following the initial method of Verlinde together with the recent results on the computation 
of the structure constants of the quantum cohomology of the Grassmannians \cite{Buch_Kresch_Purbhoo_Tamvakis}. 
In \cite{françois2024positiveformulaproductconjugacy}, the formula was given in terms of volumes of a so-called coloured hive model, 
which consists of a union of some explicit but degenerate polytopes. 

Note that volumes are always defined up to a constant and when polytopes of a Euclidean space are degenerate, there are several ways 
to choose this constant from the volume measure of the ambient space: one can either consider the volume induced by the restriction of 
the scalar product or consider the pull-back of the canonical Lebesgue measure by a bijective projection on a subset of a canonical basis. 
Both definitions agree in the case of a convex body (that is, a convex polytope of the same dimension as the ambient space), but they start to differ when degeneracies appear. 
The choice of a volume form was relatively inconsequential in the case of \cite{knutson2004honeycomb}, since there was only one polytope 
considered and the change of volume form only amounted to a change by an overall constant. 
However, we are considering here a sum of volumes of polytopes which are degenerate and lying each of them in different subspaces of the ambient space, so that we must ensure that the chosen volumes are coherent from one polytope to the other. 
In \cite{françois2024positiveformulaproductconjugacy}, the correct choice was to use the pull-back corresponding to particular bijective projections. 

It appears that the formulation of \cite{françois2024positiveformulaproductconjugacy} in terms of hives does not fit easily in the surgery formulas of Witten. 
In order to generalize the case of the three-holed sphere to arbitrary punctured compact oriented surfaces, our first task has been to reformulate 
the latter hive model in terms of a colored version of honeycombs. As a byproduct of this reformulation, 
we also obtain a new volume formula in the case of the three-holed sphere which is coordinate-free : we provide an explicit relation between 
the volume form induced by the scalar product of the ambient space and any volume induced by a bijective projection onto some coordinates. 
This relation is given by a simple combinatorial number, the number of spanning trees of a graph, in a similar spirit to \cite{kassel2022determinantal}. 
Thanks to the new honeycomb model, the surgery formulas of \cite{witten1992two} to compute the volume of flat connections easily translate into a patching of honeycombs, 
and we thus get the desired positive formula for surfaces of arbitrary genus.

Note that this formula suggests a simple geometric description of a flat $\U(n)$-connection on a compact oriented surface, 
 up to an overall conjugation by a unitary matrix, by the corresponding honeycomb arrangement. It would be very interesting 
 to provide an explicit measure preserving bijection between the two models. However up to now, such a bijection has been out of reach already 
 for the simplest particular case of the sum of hermitian matrices.

As suggested by \cite{Atiyah_Bott}, volumes of flat connections are the building blocks for the computations of the partition function 
of the two-dimensional Yang-Mills measure on compact oriented surfaces, see \cite[Section 2]{witten1992two}. 
Using the new positive formulas for flat connections, one can thus easily deduce new formulas for marginals of the Yang-Mills measure 
which are again manifestly positive. Remark that our formulas are only valid for the holonomy on a set of non-crossing curves, 
and this thus gives only a partial description of the full Yang-Mills measure. Once again, our result suggests the existence of a 
measure preserving bijection between a projection of the Yang-Mills measure and random path processes consisting of Dyson Brownian motions 
on the circle and certain frozen path configurations in polygons, see Remark \ref{rmk:Yang-Mills_volume_conditioned_process}.

\section{Notations and statement of the main result}
\subsection{Conjugacy classes of the unitary group}

Throughout this paper, we fix an integer 
$n \geqslant 3$ and denote
$\mathcal{H}=(\mathbb{R}/\mathbb{Z})^n/S_n$,
where $S_n$ is the symmetric
group of degree $n$ acting on
$(\mathbb R/\mathbb Z)^n$ by permutations
of the coordinates.
As a set, we identify $\mathcal H$ with $\{\theta=(\theta_1,\dots,\theta_n) 
\in [0,1[^n \colon \theta_1 \geqslant \dots \geqslant \theta_n\}$ in the usual way.
For our purposes, $\mathcal H$ represents
the set of conjugacy classes
of $\U(n)$, where
the conjugacy class of
$\theta \in \mathcal H$ is
\begin{equation*}
	\mathcal{O}(\theta) = 
	\left\lbrace Ue^{2\pi i\theta}U^{-1} \mid U\in \U(n)\right\rbrace
	\quad \text{ with } \quad e^{2\pi i\theta}=
	\begin{pmatrix} 
		e^{2\pi i \theta_1}&0&\dots& 0
		\\0&e^{2\pi i\theta_2}&& \vdots\\ 
		\vdots&&\ddots &\\
		0 & \hdots &&e^{2\pi i\theta_n}
	\end{pmatrix}.
\end{equation*}

\noindent
Let us use the notation $\mathcal{H}_{reg}=
\{\theta\in \mathcal{H} \mid
\theta_1>\theta_2>\ldots>\theta_n\}$ which
represents the set of regular 
conjugacy classes of $\U(n)$, namely the ones of maximal 
dimension in $\U(n)$. Finally, let $\mathcal{H}_{reg}^0$ denote the subset of $\mathcal{H}_{reg}$ corresponding to the regular conjugacy classes of $\SU(n)$. Namely,
$$\mathcal{H}_{reg}^0=\left\{\theta\in \mathcal{H} \; 
\mid  \; 
\theta_1>\theta_2>\ldots>\theta_n \text{ and }\sum_{i=1}^n \theta_i\in\mathbb{N}\right\}.$$
Consider the moduli space $\mathcal{M}_{g, n}(\alpha_1, \dots, \alpha_p)$ of flat $\U(n)$-valued connections 
on a compact oriented surface with genus $g$ having $p$ 
boundary components $\mathcal{L}_1,\ldots,\mathcal{L}_p$, for which the holonomy around 
each $\mathcal{L}$, $1\leqslant i\leqslant p$, belongs to 
$\mathcal{O}_{\alpha_i}$. Then, there is a symplectic structure on the quotient space $M_{g, n}(\alpha_1, \dots, \alpha_p)=\mathcal{M}_{g, n}(\alpha_1, \dots, \alpha_p)/\mathcal{G}_n$, where $\mathcal{G}_n$ is the $\U(n)$-valued gauge group on the surface, see \cite{meinrenken1999moduli}. The space $M_{g, n}(\alpha_1, \dots, \alpha_p)$ inherits thus a canonical volume form from the symplectic form. 

The main goal of the present paper is to provide an explicit and positive formula for the corresponding volume of $M_{g, n}(\alpha_1, \dots, \alpha_p)$. This formula is given in terms of certain configurations of lines, called honeycomb, which are a generalization of the honeycomb model introduced in \cite{knutson2004honeycomb} and are described in the next two paragraphs.

\subsection{Triangular honeycomb}\label{subsec:triangular_honey} Let us a first define a triangular honeycomb, which will contribute to the volume of the moduli space in the case $g=0,p=3$, see Theorem \ref{th:volume_flat_connection_0_3}. 

\noindent
Consider $S\subset \mathbb{R}^2$, a convex subdomain of the plane. Recall that a segment of $S$ is any set of the form $\{xv+(1-x)v', 0\leq x\leq 1\}$ for some $v,v'\in S$. For any such segment $e=\{xv+(1-x)v', 0\leq x\leq 1\}$, we then set $\partial e=\{v,v'\}$ 
and denote by $\overset{\circ}{e} =e\setminus \{v,v'\}$ its interior. A segment is then called non-trivial if $\overset{\circ}{e}\not=\emptyset$. Denote by  $\mathbb{G}$ the set of non-trivial segments of $S$.

If $e,e'$ are two non-trivial segments, we define the angle from $e$ to $e'$ as 
$$\widehat{(e,e')}=\arccos(\langle u,u'\rangle) \ ,$$
where $u,u'$ are unit tangent vectors of respectively $e$ and $e'$ such that $\det(u,u')>0$.

\begin{definition}[Non-degenerate honeycomb]
\label{def:toric_honeycomb}
A \textit{honeycomb} $h=(\mathcal{E},c)$ is a finite subset $\mathcal{E} \subset \mathbb{G}$ 
together with a color map $c:\mathcal{E}\rightarrow \{0,1,3\}$ such that :
\begin{enumerate}
\item $\overset{\circ}{e}\cap \overset{\circ}{e}'\not=\emptyset$ 
is only possible if, up to a transposition of $e$ and $e'$, 
$c(e)=0, \,c(e')=1$ and $\widehat{(e,e')}=\pi/3$,
\item if $\partial e\cap \partial e'=\{v\}$, 
then $(c(e),c(e'))\in\{(0,0),(1,1),(0,1),(1,3),(3,0)\}$ and $\widehat{(e,e')}=2 \pi/3$.
\end{enumerate}
\end{definition}
\noindent
From the definition that $\mathcal{E}$ is unambiguously defined from the representation $\bigcup_{e\in \mathcal{E}}e$ of $h$ as a subset of $S$. In the sequel, we set 
 $$t(h)=\bigcup_{e\in \mathcal{E}}e.$$
\\
\noindent
Figure \ref{fig:segments} lists the possible angles between segments in non-degenerate 
honeycombs of Definition \ref{def:toric_honeycomb}.

\begin{figure}[ht]
    \centering
    \includegraphics[scale=1]{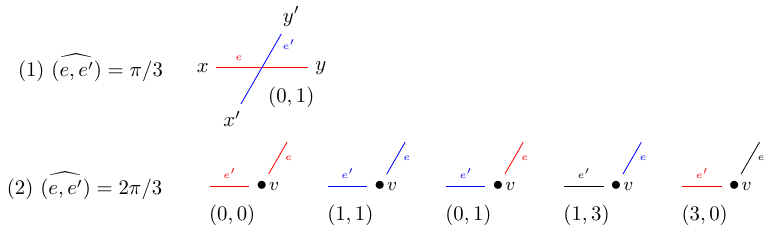}
    \caption{The possible angles between segments. The colors of the 
    edges are given in the couple below the configuration.}
    \label{fig:segments}
\end{figure}

\noindent

Remark that our choice of colors is coherent with the colors coming from \cite{Buch_Kresch_Purbhoo_Tamvakis} and \cite{françois2024positiveformulaproductconjugacy} : the colors $0$ and $1$ play a symmetric role that differs from the one played by the color $3$.

\begin{definition}
The \textit{structure graph} of a honeycomb 
$h$ is the finite graph $G(h)=(V,E)$ with colored edges such that $V=\bigcup_{e\in \mathcal{E}}\partial e$, $E=\{\partial e, e\in \mathcal{E}\}$, and the color map $c:E\rightarrow\{0,1,3\}$ is defined by $c(v,v')=c(e)$ if $\partial e=\{v,v'\}$.
\end{definition}

\noindent
By the angle condition, $G = G(h)$ has only vertices of degree less than $3$ 
and the sequence of colors around any trivalent 
(resp. bivalent) vertex of $G(h)$ belongs to 
$\{(0,0,0),(1,1,1),(0,3,1)\}$ (resp. is equal to $(0,1)$) 
in the clockwise order. 
\\
The \textit{color number} $c(G)$ of a colored graph $G$ is defined as 
the number of edges 
colored $1$ and adjacent to a univalent vertex. By abuse of definition, we speak of edges and vertices 
of a honeycomb to denote edges and vertices of its 
canonical structure graph. 
\\
\\
Let us denote by 
$T \coloneqq \{x+ye^{i\pi/3} \mid 0\leqslant x,y\leqslant 1, \ x+y\leqslant 1\} 
\subset \mathbb{C}$ the equilateral triangle with vertices $0, 1$ 
and $e^{i\pi/3}$. To each point $v \in T$ we associate the 
triple $(v_0,v_1,v_2)$ such that $v=v_1+v_2e^{i\pi/3}$ 
and $v_0=1-v_1-v_2$.
Then, the boundary $\partial T$ can be decomposed as
$$\partial T=\bigsqcup_{i\in\{0,1,2\}}\partial_iT \text{ , where } 
\partial_iT=\{v\in T \mid \,v_i=0\} \ .$$

\begin{definition}[Triangular honeycomb]
    \label{def:triangular_honey}
    For $n\geq 1$, a \textit{triangular honeycomb} $h$ of size $n$ is a non-degenerate 
    honeycomb $h=(\mathcal{E},c)$ on the surface $T$ endowed 
    with the Euclidian metric such that
    \begin{enumerate}
    \item $G(h)$ has only univalent and trivalent vertices, 
    and $t(h)\cap \partial T=\partial V$, where 
    $\partial V$ denotes the set of univalent vertices in $G(h)$,
    \item if $e\in \mathcal{E}$, then $e \subset \left\{x+\mathbb{R} \ed^{2\pi i(\ell(e) +1) / 3}\right\} $ 
    for some $\ell(e)\in\{0,1,2\}$ and $x\in T$. 
    The integer $\ell(e)$ is then called the \textit{type} of $e$ 
    and $L(e)=x_{\ell(e)}$ is called the \textit{height} of $e$ 
    which is independent of the choice of $x$, 
    \item for $0\leqslant i\leqslant 2$, $\#t(h)\cap \partial_i T=n$ and if $e$ 
    is adjacent to a boundary vertex belonging to $\partial_i T$, 
    then either $c(e)=0$ and $\ell(e)=i+1$ or 
    $c(e)=1$ and $\ell(e)=i+2$. Moreover, 
    the color is increasing along each edge : 
    namely, if $e^1$ (resp. $e^2$) meets 
    $\partial_iT$ at $x^1$ (resp. $x^2$) with 
    $x_{i+1}^2>x_{i+1}^1$, then $c(e^2)\geqslant c(e^1)$. 
    \end{enumerate}
\end{definition}

\noindent
A triangular honeycomb 
has always $n^2+3n$ vertices and $\frac{3n(n+1)}{2}$ edges, 
see Remark \ref{rmk:graph_vertices_edges}.
This definition implies that $G(h)=(V,E)$ has only trivalent 
vertices in $\overset{\circ} T$. Moreover, the condition on 
the boundaries yields a natural choice of root 
$v^0$ of $G$, corresponding to the univalent vertex lying on 
$\partial_0T$ and whose coordinate $v^1_1$ is maximal. 
Then, the cyclic counter-clockwise order on the boundary vertices 
given by the orientation of $T$ yields an order on the $3n$ boundary vertices 
$\{v^1,\ldots,v^{3n}\}$ of $\partial V$, with the vertices $\{v^{ni+j},1\leqslant j\leqslant n\}$ located on $\partial_iT$.
\\
In the particular case where all edges have the same color, 
our definition is the original definition of a generic honeycomb 
from \autocite{Knutson_Tao_honeycomb}. Remark that, besides the coloring, 
our definition of triangular honeycombs differs slightly from the original 
one since we impose vertices to be trivalent inside $\interior(T)$. 
The set of honeycombs in their original definition can then be seen as the closure 
of the ones from the present manuscript (in the case where all colors are the same).
\\
\\
The \textit{boundary}, or the \textit{boundary values}, 
of a triangular honeycomb $h$ of size $n \geqslant 3$ 
is the $3n$ tuple 
\begin{equation}
    \label{eq:boundary_honeycomb_order}
    \partial h \coloneqq 
    \left((\alpha^0_n \leqslant \dots\leqslant  \alpha^0_1), 
    (\alpha^1_n\leqslant \dots\leqslant \alpha^1_1), 
    ( \alpha^2_n\leqslant \dots\leqslant \alpha^2_1)\right),
\end{equation}
where $(\alpha^i_n\leqslant \dots\leqslant \alpha^i_1)$ is the ordered 
tuple of the $i+1$-coordinates of boundary points of 
$t(h)$ on $\partial_iT$. Hence, 
$\alpha^i_j=v^{ni+j}_{i+1}$ for $0\leqslant i\leqslant 2$ and $1\leqslant j\leqslant n$.
\\
\\
For $\alpha,\beta,\gamma\in \mathcal{H}_{reg}$ let us 
denote by $\honey_{n, d}(\alpha, \beta, \gamma)$ 
the set of triangular honeycombs $h$ having boundary 
values $ \partial h = (\beta, \alpha, \gamma)$, 
see Figure \ref{fig:boundary_toric_honey}, and such that 
there are $d$ edges colored $1$ meeting one (and thus each) 
boundary component of $T$. 
For any colored graph $G$ with an order on the boundary vertices, let us denote by 
\begin{equation*}
    \honey_{n,d}^G({\alpha,\beta,\gamma})
\end{equation*}
the set of triangular honeycombs of $\honey_{d, n}(\alpha, \beta, \gamma)$ with canonical graph 
structure isomorphic to $G$ as colored graph with ordered boundary. Then, if $\honey_{n,d}^G({\alpha,\beta,\gamma})$ is non-empty,  necessarily the graph $G$ has $3d$ univalent vertices adjacent to edges colored $1$. We then define the color of $G$ as $c(G)=d$.
Let us denote by $\mathcal{G}_d$ the set of isomorphism classes of colored 
graphs with ordered boundary appearing in $\{G(h) \mid h\in\honey_{n,d}\}$.

\begin{figure}[h]
    \centering
    \includegraphics[scale=0.6]{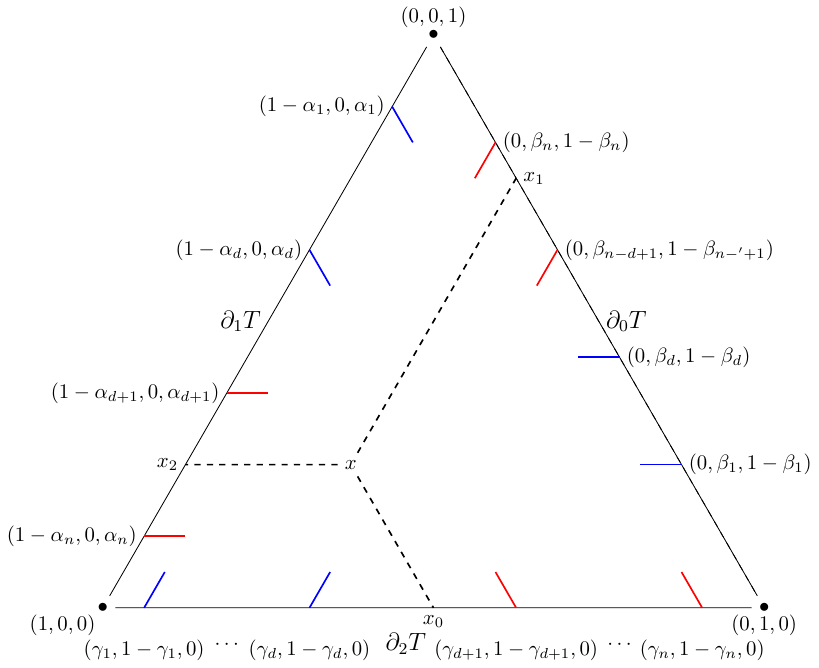}
    \caption{Boundaries of a honeycomb in $\honey_{n, d}(\alpha, \beta, \gamma)$.}
    \label{fig:boundary_toric_honey}
\end{figure}

\subsection{ \texorpdfstring{$(g,p)-$}{(g, p)-}honeycombs}\label{subsec:(g,p)_honey} We next turn to a generalization of triangular honeycombs which will provide a formula for arbitrary surfaces. Let $g,p\geqslant 0$ be integers 
such that $N=p + 2g-2 \geqslant 1$ and let $S$ be a 
connected compact orientable surface of genus $g$ with 
$p$ boundary components $\mathcal{L}_1, \ldots, \mathcal{L}_{p}$. 
Let $\mathcal{M}=(M_1\ldots, M_{N})$ be a pant decomposition 
of this surface. 
We build from $\mathcal{M}$ a surface $\mathcal{T}$ as follows: 
\begin{itemize}
    \item take $N$ oriented equilateral triangles 
    $(T^1,\ldots,T^{N})$, each $T^i$ having three oriented 
    boundaries $\partial_{j}T^i$ of size $1$. 
    \item For each pair of boundary components $\partial_jM^i, 
    \partial_{j'}M^{i'}$ which are identified in the pair of pants 
    decomposition, write $(i,j)\sim(i',j')$ and identify the boundaries $\partial_jT^i$ and 
    $\partial_{j'}T^{i'}$ in an isometric and orientation reversing way. 
\end{itemize}
Then, the resulting surface $\mathcal{T}$ is an oriented surface 
with $p$ boundaries edges denoted by
$L_1,\ldots,L_p$. If $L_i$ a boundary component, 
there exists a triangle $T_{j_i}$ and $\ell_i\in\{0,1,2\}$ 
such that $L_i=\partial_{\ell_i} T^{j_i}$. 

For each $1\leq i\leq p+2g-2$, let $f_i:T\rightarrow \mathcal{T}$ be an orientation preserving isometry such that $f_i(T)=T^i$ (where $T^i$ is seen as a subset of $\mathcal{T}$ by the above construction). For a triangular honeycomb $h=(\mathcal{E},c)$, set $t_i(h)=\bigcup_{e\in\mathcal{E}}f_i(e)$. 
\begin{definition}\label{def:(g,p)_honey}
A \textit{$(g,p)$-honeycomb }is a tuple $(h^i)_{1\leq i\leq N}$ of triangular honeycombs such that\\ $t_i(h^i)_{\vert \partial_j T^i}=t_{i'}(h^{i'})_{\vert \partial_{j'}T^{i'}}$ for any $(i,j)\sim(i',j')$.
\end{definition} 
See Figure \ref{fig:ex_g2_p_3} for the representation of a $(g,p)$-honeycomb for $g=2$ and $p=3$.
\begin{remark}
The natural euclidean metric on each equilateral 
triangle yields a metric on $\mathcal{T}$ which is flat 
except at the vertices of the triangulation belonging to 
the interior of $\mathcal{T}$. Hence,  writing $h^i=(\mathcal{E}^i,c^i)$ for $1\leq i\leq 2p+g+2$, $\bigcup_{i=1}^{N}\mathcal{E}^i$ defines again a non-degenerate honeycomb on $\mathcal{T}$, where we extended Definition \ref{def:toric_honeycomb} from a subdomain of $\mathbb{R}^2$ to any locally flat Riemannian oriented surface.
\end{remark}

\noindent
Let us denote by $\honey^{(g, p)}$ the set of 
$(g,p)$-honeycombs. For $h=(h^i)_{1\leqslant i\leqslant N}\in\honey^{(g,p)}$ and $1 \leqslant i \leqslant N$, let $G(h^i)=(V^i, E^i)$ be 
the structure graph of the honeycomb $h_i$ with corresponding color map $c_i:E^i\rightarrow \{0,1,3\}$. 
We then define the structure graph of $h$ as the graph $G(h) = (V, E)$ where
$$V = \bigcup_{1\leqslant i\leqslant N}f_i(V^i) \ \text{ and } \ 
E = \bigcup_{1\leqslant i\leqslant N} \{f_i(e), e\in E^i\} \ .$$
We define a coloring of the edges of this graph by setting $c(f_i(e))=c_i(e_i)$ if $e\in E^i$.

\noindent
If $h$ is a $(g,p)$ honeycomb and $1\leq i\leq p$, we set $\partial_i h = t(h)\cap L_i$. Each element $v$ of $t(h)\cap L_i$ belongs to a unique triangle $T^{j_i}$ and inherits a triple of coordinate $(v_0,v_1,v_2)$ from this triangle.  Moreover, as in the triangular case, the orientation of $S$ yields an orientation on each boundary component. 
For $h\in \honey^{(g,p)}$, order the boundary points so that 
$$t(h)\cap \bigcup_{1\leqslant i\leqslant p}L_i=\{v^{(i-1)n+j},1\leqslant i\leqslant p,j\leqslant n\},$$ 
where  $\partial_ih=\{v^{(i-1)n+j},\, 1\leqslant j\leqslant n\}$ with $v^{(i-1)n+j}_{\ell_i+1}>  v^{(i-1)n+j'}_{\ell_i+1}$ if $1\leqslant j<j'\leqslant n$. Hence, the structure graph $G(h)$ inherits the previous order on its boundary points. 
\\
We denote by $\honey^{(g,p)} \left(\alpha^1,\ldots,\alpha^p\right)$ 
the set of $(g,p)$-honeycombs with boundary components 
$\partial_ih=\alpha^i$.
\\
Like in the triangular case, we denote by $\honey^{G}$  
the subset of $\honey^{(g,p)}$ 
of honeycombs $h$ with structure graph $G(h)$ isomorphic to $G$ as colored graph with ordered boundary vertices. 
We also denote by $\mathcal{G}^{(p, g)}$ the set of isomorphism classes of 
colored graphs with ordered boundary appearing as structure graph of elements of $\honey^{(g,p)}$.
For $G\in\mathcal{G}^{(g,p)}$, let us set 
$$c(G)=\sum_{i=1}^Nc(G_i) \ ,$$
where $c(G_i)$ is the color number of the structure graph $G_i$ as defined at the end of Section \ref{subsec:triangular_honey}. Let us denote by $\mathcal{G}^{(g,p)}_{d}\subset \mathcal{G}^{(g,p)}$ 
the subset of graphs $G$ such that $c(G)=d$.

\begin{figure}[h!]
    \centering
    \includegraphics[width=\textwidth]{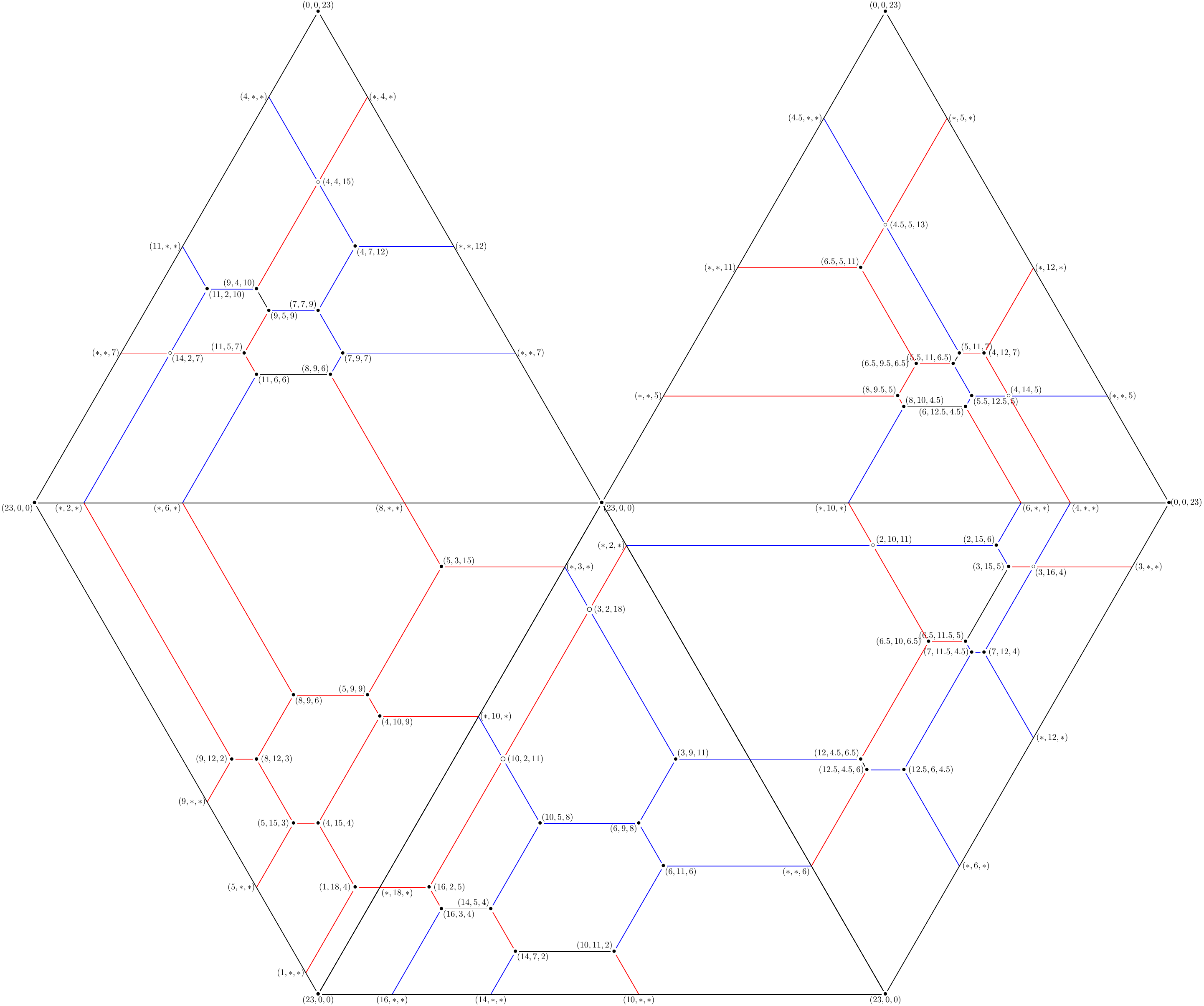}
    \caption{A $(2, 3)$ honeycomb for a surface with genus 2 and 3 boundaries. On each upper triangle, the endpoints of the segments of the honeycomb coincide on the left and on the right edge.}
    \label{fig:ex_g2_p_3}
\end{figure}

\noindent
For $G = (V, E) \in\mathcal{G}^{(p, g)}$, there is a natural parametrization of 
$\honey^G$ constructed as follows. For any edge $e\in E$, there exists $1\leqslant i\leqslant N$ such that 
$e=f_i(\tilde{e})$ with $\tilde{e}\in E^i$. For $h\in\honey^G$, let us set 
$$P[h](e)=L(\tilde{e}) \ ,$$ 
where $L$ is the height map defined on $T$ from 
Definition \ref{def:triangular_honey}. 
Then, introduce the map
\begin{equation}
    \label{eq:parametrization_P}
    P: \honey^{G}\rightarrow \mathbb{R}^{E} \ ,
\end{equation}
sending $h\in \honey^{G}$ to $(P[h](e))_{e\in E}$. 
This map is clearly non-surjective 
since $P \left(\honey^G\right)$ is a bounded subset of 
$\left(\mathbb{R}^{3n(n+1)/2}\right)^N$. 
Moreover, there are several relations among the 
values of $((P[h](e))_{e\in G_h^i})_{1\leqslant i\leqslant N}$
which imply that $P$ is an over-parametrization of $\honey^{G}$. 
However, it will follow from 
Proposition \ref{prop:parametrization_triangular_honey_first} that $P$ is injective. 
In the sequel, we identify $\honey^G$ with its image through $P$. 
\\
\\
As a consequence of the decomposition described in Section \ref{sec:sieving_honeycombs} below, 
all graphs of $\mathcal{G}^{(g,p)}$ have the same number $n_{e} = \frac{3Nn(n+1)}{2}$ 
of edges and $n_{v} = Nn^2+\frac{(3N - (p+2g))n}{2} $ of vertices. 
Hence, up to identifying edges and vertices of these graph, one can assume that there 
is a unique set $E^{(g,p)}$ of edges (resp. set $V^{(g,p)}$ of vertices) 
common to all graphs of $\mathcal{G}^{(g,p)}$ and that the graph structure of 
$G\in \mathcal{G}^{(g,p)}$ is encoded in the map $\partial:E^{(g,p)}\rightarrow 
\mathcal{P}(V^{(g,p)})$ which associates to an edge its endpoints.

\subsection{Volume formula for the moduli space of flat connections}

For $g,p\geqslant 0$, Set $\diff \Vol\in\Omega^{n_{g,p}}\left(\mathbb{R}^{E^{(g,p)}}\right)$ 
for the volume form on $n_{g,p}$-dimensional subspaces induced by the 
canonical scalar product on $\mathbb{R}^{E^{(g,p)}}$. Hence, 
\begin{equation}
    \label{eq:volume_R_E}
    \diff \Vol = \sum_{1\leqslant i_1<\ldots<i_{n_{g,p}} \leqslant n_{e}}\diff x_{i_1} 
    \wedge\ldots \wedge \diff x_{i_{n_{g,p}}}.
\end{equation}
In the following statement, we write $\Vol(K)$ for the integration of $ \diff \Vol$ on a 
$n_{g,p}$-dimensional submanifold and we let $\Delta(x)=2^{n(n-1)/2}\prod_{i<j}\sin(\pi(x_i-x_j))$ 
denote the absolute value of the Vandermonde determinant of $(e^{ix_j})_{1\leqslant j\leqslant n}$.

Let $Z_{g, p}(\alpha_1, \dots, \alpha_p)$ be the volume function for the moduli space of flat unitary connections on a compact oriented surface with genus $g$ and with $p$ boundary components around which the holonomies belong to $\alpha_1,\ldots,\alpha_p$. Since the product of the determinant of the holonomies around the removed point is the identity, $Z_{g, p}(\alpha_1, \dots, \alpha_p)$ is non-zero only if $\sum_{i=1}^p\vert \alpha_i\vert\in \mathbb{N}$, where $\vert x\vert=\sum_{j=1}^n\vert x_j\vert$ for $x\in \mathbb{R}^n$.
\begin{theorem}[Volume formula for $(g, p)$ partition function]
    \label{th:Z_g_p_0}
    
    For $\alpha_1, \dots, \alpha_p\in\mathcal{H}_{reg}$ such that \\ $\sum_{i=1}^p\vert \alpha_i\vert\in \mathbb{N}$, 
    \begin{equation}
        \label{eq:Z_g_p_0}
        Z_{g, p}(\alpha_1, \dots, \alpha_p) = 
        \frac{c_{0,3}^{N}}{n^{2g+p-3}\prod_{j=1}^{p} \Delta(\alpha_j)}
        \sum_{\substack{ G \in \mathcal{G}^{(g,p)}}}
        \frac{\Vol \left[ \honey^G(\alpha_1, 
        \dots, \alpha_p) \right]}{\sqrt{\# \mbox{ Spanning trees of } G}},
    \end{equation}
   where $c_{0,3}=\frac{2^{(n+1)[2]}(2\pi)^{(n-1)(n-2)}}{n!}$.
\end{theorem}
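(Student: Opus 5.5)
We argue by induction along the pants decomposition $\mathcal{M}=(M_1,\dots,M_N)$, gluing one pair of pants at a time. The base case is $(g,p)=(0,3)$, that is $N=1$. Here the statement should follow from the three-holed sphere formula of \cite{françois2024positiveformulaproductconjugacy}, stated as Theorem \ref{th:volume_flat_connection_0_3}. That formula is written as a sum over coloured hives, with volumes taken as pull-backs of Lebesgue measure under bijective coordinate projections.

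We first translate hives into triangular honeycombs using the parametrization $P$, which is injective by Proposition \ref{prop:parametrization_triangular_honey_first}. The heights of the honeycomb edges are affine functions of the hive coordinates, so this step is a linear change of variables. We then compare the projected volume with the volume induced by the scalar product on $\mathbb{R}^{E}$. The affine span of $\honey^G(\alpha,\beta,\gamma)$ is cut out by the linear relations "heights around each trivalent vertex sum to a constant" together with the boundary constraints. This is a graph-Laplacian/incidence-type system. By Cauchy–Binet, the Gram determinant of a basis of its tangent space, relative to a coordinate basis, equals the number of spanning trees of $G$, in the spirit of \cite{kassel2022determinantal}. This produces the factor $1/\sqrt{\#\text{spanning trees}}$ and fixes the constant $c_{0,3}/\Delta(\alpha)\Delta(\beta)\Delta(\gamma)$.

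For the inductive step we use Witten's gluing formulas \cite{witten1992two}, proved rigorously in \cite{jeffrey1998intersection, Meinrenken_Woodward}. There are two of them:
\begin{itemize}
\item separating gluing: $Z_{g_1+g_2,p_1+p_2-2}=\frac{1}{n}\int_{\mathcal{H}}Z_{g_1,p_1}(\dots,\theta)\,Z_{g_2,p_2}(\dots,\theta^*)\,\Delta(\theta)^2\,d\theta$, up to a normalisation constant to be tracked;
\item self-gluing: a boundary pair $(\theta,\theta^*)$ of the same surface is integrated out, giving $Z_{g+1,p-2}$.
\end{itemize}
When we insert the inductive hypothesis, the factors $\Delta(\theta)^{-1}$ from each side cancel exactly against the weight $\Delta(\theta)^2$, and each gluing contributes one factor $1/n$. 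This accounts for the power $n^{-(2g+p-3)}$, since the number of gluings is $N-1$.

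We then show that integrating over the glued boundary values $\theta$ and summing over pairs of graphs reassembles the sum over $G\in\mathcal{G}^{(g,p)}$. This uses Definition \ref{def:(g,p)_honey}: a $(g,p)$-honeycomb is exactly a pair of honeycombs whose traces agree on the identified edge. So $\honey^G$ fibres over $\theta\in\mathcal{H}_{reg}$, with fibre the product of the two pieces. The condition $\sum|\alpha_i|\in\mathbb{N}$ and the integrality constraints on $\theta$ are handled by summing over the colour degree $d$, since colour-$1$ edges shift $\theta$ by integers.

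The main obstacle is the volume bookkeeping under gluing. We must check that the scalar-product volume on $\honey^G$ decomposes as $\int d\theta$ times the product of fibre volumes, with the spanning-tree factors multiplying correctly: $\sqrt{\tau(G)}$ should equal $\sqrt{\tau(G_1)\tau(G_2)}$ times the Jacobian from the $n$ coordinates of $\theta$. My plan for this is to use the same Cauchy–Binet computation as in the base case, but now on the glued incidence system. The spanning trees of $G$ should biject with pairs of spanning forests of the pieces that are compatible along the $n$ glued boundary edges. If the counts do not factor exactly, I would instead prove the formula directly for all $(g,p)$ by computing the Gram determinant of the full constraint system for $G$ and comparing it with the product of coordinate projections. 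The combinatorial decomposition of Section \ref{sec:sieving_honeycombs} guarantees that all graphs have the same number of edges and vertices, so a uniform dimension $n_{g,p}$ and a single volume form are available.
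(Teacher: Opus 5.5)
Your overall architecture matches the paper's: the three-holed sphere as base case, Meinrenken--Woodward/Witten gluing for the induction, and $\Delta(\theta)^2$ cancelling against $\Delta(\theta)\Delta(\tilde\theta)$. However, the step you yourself call the main obstacle is left open, and the route you sketch for it is not the right one. You want the scalar-product volume to disintegrate over the glued boundary value with $\sqrt{\tau(G)}=\sqrt{\tau(G_1)\tau(G_2)}\cdot J$, where $\tau$ is the number of spanning trees. You propose to prove this by matching spanning trees of $G$ with compatible pairs of spanning forests.

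Such a $J$ exists, but it is a graph-dependent coarea constant. In general $\tau(G)$ is not a fixed multiple of $\tau(G_1)\tau(G_2)$: spanning trees of $G$ may cross between the pieces through several of the glued bivalent vertices, which brings in multi-component spanning forests of the pieces. Computing $J$ that way is the whole difficulty.

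The missing observation is already contained in your base-case computation. By Proposition \ref{prop:divergence_volume}, the measure $\Vol/\sqrt{\tau(G)}$ is exactly Lebesgue measure in the flow coordinates of \emph{any} cotree (complement of a spanning tree) of $G$, whichever cotree is chosen. Cotrees glue: if $S_i$ is a cotree of $G_i$, then $S_1\sqcup S_2\sqcup R$ is a cotree of the glued graph. Here $R$ consists of $n-1$ of the glued boundary edges on one side in the separating case, and all $n$ of them in a self-gluing. Fubini in these coordinates gives Propositions \ref{prop:divergence_product} and \ref{prop:divergence_contraction}, hence \ref{prop:formula_gluing_honey} and \ref{prop:formula_contracting_honey}, with no spanning-tree counting across the gluing. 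The $R$-coordinates are, up to signs and integer shifts, $n-1$ (resp.\ $n$) coordinates of the glued boundary value $u$; that is why they match the measure on $\theta$.

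The constant and measure bookkeeping also needs fixing.
\begin{itemize}
\item A pants decomposition has $3g+p-3$ gluing curves, not $N-1$.
\item A non-separating gluing contributes no factor $1/n$. The $1/\#Z(\SU(n))$ of Meinrenken--Woodward appears only when the cut surface is disconnected. For a non-separating cut, the alcove integral becomes an integral over the full $n$-dimensional $\mathcal{H}_{reg}$ after averaging over the centre (Proposition \ref{prop:volume_formula_flat}). So the exponent $2g+p-3=N-1$ counts only separating gluings.
\item A separating gluing is not an integral over $\mathcal{H}$. It is an integral over the $(n-1)$-dimensional slice $\mathcal{H}^{s}_{reg}$ with the coordinate-projection measure, reached from the $\SU(n)$ statement through \eqref{eq:SU_n_equal_Un} and the shift $\theta\mapsto\theta-s$. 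On the moduli side, it is this transfer, not a sum over colour degrees, that handles the integrality constraint.
\item In the base case, the hive-to-flow change of variables has to be unimodular. The paper obtains this because both the map and its inverse have integer matrices; otherwise a lattice index would appear next to $\sqrt{\tau(G)}$.
\end{itemize}
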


\noindent
The sum on the right hand-side is finite since the set $\mathcal{G}^{(g,p)}$ is finite. 
As it will appear below, for given $\alpha_1, \dots, \alpha_p\in\mathcal{H}_{reg}$ 
the volumes appearing in the sum will be non-zero only for a subset $G\in \mathcal{G}_{d}^{(g,p)}$ 
where $d=\sum_{i=1}^p\sum_{j=1}^n\alpha_j^i-(p-2)n$. 
\\
The formula of Theorem \ref{th:Z_g_p_0} also yields a formula for 
$\SU(n)$-valued connections, since the volume for the $\SU(n)$ case is equal 
to the one of the $\U(n)$ case for $\alpha_1,\ldots,\alpha_p\in\mathcal{H}_{reg}^0$, 
see \eqref{eq:SU_n_equal_Un}. 

There are several other conventions to define the volume of flat connections, depending on the chosen volume on the unitary group (see \cite[Section 4.1]{witten1992two}) and depending on the kind of volume form considered. On the latter convention, remark that we are choosing here the symplectic volume, more adapted to Yang Mills generalizations, rather than the torsion volume introduced in \cite{witten1992two}, see also \cite[Appendix A]{Meinrenken_Woodward} for a symplectic point of view on the relation between both volumes. The volume forms are easily related by the explicit factor $\prod_{j=1}^{p} \Delta(\alpha_j)$, see \cite[Section 4.7]{witten1992two}.

\begin{remark}[Volume and conditioned processes]\label{rmk:_volume_conditioned_process}
For each $G\in\mathcal{G}^{(g,p)}$, choosing a spanning tree $S\subset E^{(g,p)}$ of $G$ yields a natural parametrization of $\honey^G(\alpha_1,\ldots,\alpha_p)$ in terms of  the coordinates $P(e),e\in S$. By construction, each of those coordinates belongs to $[0,1]$. Considering the uniform probability measure on $[0,1]^S$, one can prove that the formula $\frac{\Vol \left[ \honey^G(\alpha_1, 
        \dots, \alpha_p) \right]}{\sqrt{\# \mbox{ Spanning trees of } G}}$ is then exactly the probability that the vector $(L(e))_{e\in S}$ belongs to some convex set $K_G$, see Lemma \ref{lem:differential_gluing_honey} for a definition of $K_G$. After the summation on all graphs $G$, $Z_{g, p}(\alpha_1,\ldots,\alpha_p)$ can be seen as the probability that a random process remains in a union of convex set, up to an explicit constant. Visually, the non-conditioned process would allow segments to exit the domain and to meet at the trivalent vertices with angles being any multiple of $\pi/3$. 
\end{remark}
\subsection{Yang-Mills marginal for disjoint curves}\label{subsec:Yang-mills_result}
As a consequence of Theorem \ref{th:Z_g_p_0}, there is an explicit formula 
for the marginal of $\U(n)$-valued Yang--Mills partition function of an
oriented surface 
of genus $g$ with prescribed non-degenerate holonomies 
(up to conjugation) on a finite set of disjoint loops. This 
formula is given in Corollary \ref{cor:yang_mills_g_p} below.
As it is proven in \cite{levy2003yang}, the partition function 
only depends on the prescribed conjugacy classes and on the 
areas of each connected components delimited by the loops. 
\\
\\
Let $S$ be a connected compact oriented surface of genus $g \geqslant 0$
together with $p$ disjoints Jordan curves 
$\Gamma_1,\ldots,\Gamma_p$ on $S$. For each 
$\Gamma_i, \,1\leqslant i\leqslant p$, let $\alpha_i$ be an element of 
$\mathcal{H}_{reg}$.
\\
We associate to $(S,\Gamma_1,\ldots,\Gamma_p)$ a labeled finite directed graph $\mathcal{T}=(V,E)$ such that vertices are labelled by $\mathbb{R}_{\geqslant 0} \times \mathbb{N}$ and directed edges are labelled by $\mathcal{H}_{reg}$ as follows :
\begin{itemize}
\item the set $V$ of vertices of $\mathcal{T}$ is the set of connected components of $S\setminus \bigcup_{i=1}^p\Gamma_i$. 
Each vertex $v\in V$ is labelled $(A_v,g_v)$ where $A_v$ is the area of the corresponding connected component and $g_v$ is its genus.
\item For $v_1,v_2\in V$, there is a directed edge $e=(v_1,v_2)$ from $v_1$ to $v_2$ 
if $v_1$ and $v_2$ are boundary components of a loop $\Gamma_i$, and we write $v_1=s(e)$. We then label the oriented edge $e$ by $\alpha_e=\alpha_i$ if the loop $\Gamma_i$ is positively oriented on $v_1$ and otherwise by $\alpha_{e}=1-\alpha_e$ (hence, in any case, $\alpha_{(v_1,v_2)}=1-\alpha_{(v_2,v_1)}$).
\end{itemize}

\noindent
We refer to Figure \ref{fig:fig_loops} for an example of such a configuration of loops and its corresponding 
edge-labelled tree. In the following, let us denote by $d_v$ the degree of a vertex $v\in V$.

\begin{figure}[h!]
    \centering
    \begin{minipage}{0.55\textwidth}
        \centering
        \includegraphics[width=\linewidth]{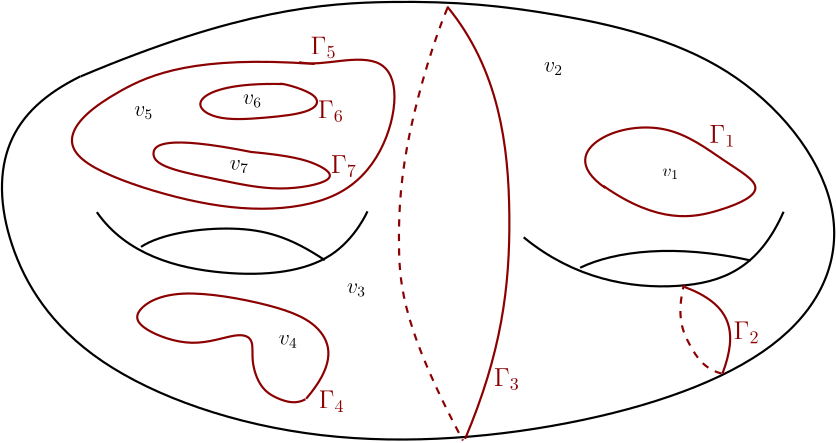}
    \end{minipage}
    \hfill
    \begin{minipage}{0.35\textwidth}
        \centering
    \scalebox{0.7}{\begin{tikzpicture}[
    vertex/.style={minimum size=1cm},
    edge/.style={thick}
]

\node[vertex] (v7) {$v_7$};
\node[vertex, above right=1cm and 0.5cm of v7] (v6) {$v_6$};
\node[vertex, below right=1cm and 0.5cm of v6] (v5) {$v_5$};
\node[vertex, above right=0.5cm and 1cm of v5] (v3) {$v_3$};
\node[vertex, above left=1cm and 0.5cm of v3] (v4) {$v_4$};
\node[vertex, below right=1cm and 0.3cm of v3] (v2) {$v_2$};
\node[vertex, below left=0.5cm and 1cm of v2] (v1) {$v_1$};

\draw[edge] (v2) to [bend right] node[above] {$\Gamma_1$} (v1);
\draw[edge] (v2) edge[loop above] node {$\Gamma_2$} ();
\draw[edge] (v2) to [bend left] node[left] {$\Gamma_3$} (v3);
\draw[edge] (v3) to [bend right] node[above] {$\quad\Gamma_4$} (v4);
\draw[edge] (v3) to [bend left] node[left] {$\Gamma_5\quad$} (v5);
\draw[edge] (v5) to[bend right] node[above] {   $\quad\Gamma_6$} (v6);
\draw[edge] (v5) to[bend left] node[above] {$\Gamma_7$} (v7);

\end{tikzpicture}}
    \end{minipage}
    \caption{A disjoint loops configuration and its corresponding edge-labelled tree}
    \label{fig:fig_loops}
\end{figure}
 For $T \geqslant 0$ and $x,y \in \mathcal
H$, let us denote by $k_T(x,\cdot)$ the transition kernel from $x$ to $y$ of the unitary Dyson Brownian motion on $\mathcal{H}$, which is the projection on $\mathcal{H}$ of the $\U(n)$-valued Brownian motion starting at $\mathcal{O}(x)$, see \eqref{eq:kernel_brownian}.

\begin{corollary}[Yang-Mills partition function]
    \label{cor:yang_mills_g_p}
    The Yang-Mills partition function associated 
    to the data $(S,\Gamma_1,\ldots,\Gamma_p)$ is 
    \begin{equation}
        \label{eq:yang_mills_marginal_g_p}
        \operatorname{YM}(\alpha_1,\ldots,\alpha_p) 
        =\int_{\mathcal{H}^{\sum_{i=1}^{\#V}d_v}} 
        \prod_{v\in V}Z_{g_v, d_v}(\mu_1^v, \dots, \mu_{d_v}^v) 
        \prod_{e\in E, v=s( e)}k_{A_v/d_v}(1-\mu_i^v,\alpha_e) 
        \diff \mu_i^v \ ,
    \end{equation}
where $Z_{0, 1}(\mu)=\delta_{0}$ and $Z_{g, p}$ 
is given in \eqref{eq:Z_g_p_0} otherwise.
\end{corollary}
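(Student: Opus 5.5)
We would derive Corollary \ref{cor:yang_mills_g_p} from the gluing (sewing) property of the Yang--Mills measure and the small-time/zero-area structure of the heat kernel, using Theorem \ref{th:Z_g_p_0} only at the last step to identify the flat pieces. The first step is to recall the character expansion of the Yang--Mills partition function on a surface $\Sigma_v$ of genus $g_v$, area $A_v$, with $d_v$ boundary components of prescribed conjugacy classes $\beta_1,\ldots,\beta_{d_v}$:
\[
\operatorname{YM}_{g_v,d_v,A_v}(\beta_1,\ldots,\beta_{d_v})=\sum_{\lambda}(\dim\lambda)^{2-2g_v-d_v}e^{-c_2(\lambda)A_v/2}\prod_{i=1}^{d_v}\chi_\lambda(\beta_i),
\]
with the normalisation of \cite{levy2003yang}. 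We would then use the Markov/sewing property: the marginal of the holonomies along the disjoint curves $\Gamma_1,\ldots,\Gamma_p$ factorises as a product over the components $v\in V$ of such partition functions. Each edge $e$ of $\mathcal{T}$ carries the class $\alpha_e$ on the side $s(e)$ and $1-\alpha_e$, i.e. the inverse class, on the other side. This is consistent because a loop is positively oriented on exactly one of its two sides.

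The second step splits each component's area. Using $e^{-c_2(\lambda)A_v/2}=\prod_{i=1}^{d_v}e^{-c_2(\lambda)A_v/(2d_v)}$ together with the convolution identity $\chi_\lambda(\alpha)e^{-c_2(\lambda)t/2}=\dim\lambda\int \chi_\lambda(\mu)\,k_t(\mu^{-1},\alpha)\,\diff\mu$, we rewrite $\operatorname{YM}_{g_v,d_v,A_v}$ as a flat (area-zero) partition function glued to $d_v$ collars of area $A_v/d_v$. Concretely, this is an integral over $\mu^v_1,\ldots,\mu^v_{d_v}\in\mathcal{H}$ of $Z_{g_v,d_v}(\mu^v_1,\ldots,\mu^v_{d_v})\prod_i k_{A_v/d_v}(1-\mu_i^v,\alpha_{e_i})$. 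Here the flat piece is the area-zero limit of the character series, which by \cite{witten1992two} is the symplectic volume function, up to the convention factors already fixed in Theorem \ref{th:Z_g_p_0}. Pushing the Haar measure on conjugacy classes down to $\mathcal{H}$ produces Weyl-density factors $\Delta^2$. We would track these so that they are absorbed into the definition of $k_t$ in \eqref{eq:kernel_brownian} and into the $\prod_j\Delta(\alpha_j)^{-1}$ normalisation of $Z_{g,p}$.

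The third step is bookkeeping. Taking the product over $v\in V$ and integrating over all the $\mu_i^v$ gives \eqref{eq:yang_mills_marginal_g_p}, and substituting Theorem \ref{th:Z_g_p_0} yields the positive formula. Degenerate components need separate treatment. A disc ($g_v=0$, $d_v=1$) has flat holonomy equal to the identity, whence $Z_{0,1}=\delta_0$. A component with $2g_v+d_v-2\le 0$ other than the disc, namely the annulus, requires checking that $Z_{0,2}(\mu,\nu)$ is the delta function identifying $\nu$ with $1-\mu$, so that two collars of total area $A_v$ compose by the semigroup property of $k$.

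We expect the main obstacle to be making the area-zero limit and the normalisation constants rigorous and consistent. This means justifying the exchange of the character sum with the integrals in $\mu$, which needs convergence of $\sum(\dim\lambda)^{2-2g-d}$ in the flat regime and is delicate for $2-2g-d=0$. It also means matching the symplectic volume conventions, including the $\Delta$ factors and the $\U(n)$ versus $\SU(n)$ determinant constraint $\sum|\mu_i^v|\in\mathbb{N}$. We would handle this by working with $A_v>0$ throughout, where all series converge absolutely, and applying the flat limit only inside each factor via Witten's formula.
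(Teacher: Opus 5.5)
Your proposal is correct in outline and has the same skeleton as the paper's proof. Both factorise $\operatorname{YM}$ over the components $v\in V$ by the sewing property; the paper invokes \cite[Propositions~5.1.3 and 5.4.3]{levy2003yang} for disconnecting and non-disconnecting cuts. Both then view each component as a zero-area core carrying $Z_{g_v,d_v}$, surrounded by $d_v$ collars of area $A_v/d_v$ carrying Dyson kernels, with Theorem~\ref{th:Z_g_p_0} entering only at the very end.

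The difference is how the collars are split off. The paper never uses characters. It applies the Markov property directly to cut an annulus off along each boundary curve, and uses the lemma $Z_{0,2,T}(\alpha_1,\alpha_2)=k_T(\alpha_1,\alpha_2)/\Delta(\alpha_2)^2$ for each annulus. It then identifies the remaining zero-area partition function with the symplectic volume by quoting \cite{sengupta2003volume}. You instead split algebraically, through $e^{-c_2(\lambda)A_v/2}=\prod_i e^{-c_2(\lambda)A_v/(2d_v)}$ and a heat-kernel convolution, and identify the core through Witten's character formula.

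The paper's route is shorter and sidesteps all convergence questions about character series. Yours makes the mechanism explicit and yields two things the paper leaves implicit:
\begin{itemize}
\item The $\det^k$-twists of $\U(n)$ representations produce the Dirac comb $\sum_{k\in\mathbb{Z}}e^{2\pi i k\sum_i|\mu^v_i|}$. This says precisely how $Z_{g_v,d_v}$, supported on the null set $\sum_i|\mu_i^v|\in\mathbb{N}$, is to be integrated against $\prod\diff\mu_i^v$ in \eqref{eq:yang_mills_marginal_g_p}.
\item It forces you to treat the annulus, where \eqref{eq:Z_g_p_0} is undefined and which the paper does not address.
\end{itemize}

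Your remark about ``working with $A_v>0$ throughout'' needs adjusting, because after the split the core has area exactly zero. To make it rigorous, keep a core of area $\varepsilon>0$, where everything converges absolutely, and let $\varepsilon\to0$. Since the collar kernels are smooth in $\mu$, distributional convergence of the core partition function suffices. This limit is exactly the input the paper takes from \cite{sengupta2003volume}.

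One correction: your convolution identity carries a spurious factor $\dim\lambda$. Take matching normalisations of $c_2$ and of the Brownian motion. Since $\chi_\lambda$ is a Laplacian eigenfunction and $k_t(x,\cdot)$ is the law of the projected Brownian motion started in $\mathcal{O}(x)$, the correct identity is
$e^{-c_2(\lambda)t/2}\chi_\lambda(x)=\int_{\mathcal{H}}\chi_\lambda(y)\,k_t(x,y)\,\diff y$.
Reversing the kernel to the orientation $k_t(y,x)$ used in \eqref{eq:yang_mills_marginal_g_p} costs the Weyl weights $\Delta(y)^2/\Delta(x)^2$, by reversibility $\Delta(x)^2k_t(x,y)=\Delta(y)^2k_t(y,x)$. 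With your extra factor, the $d_v$ substitutions would turn the core into $\sum_\lambda(\dim\lambda)^{2-2g_v}\prod_i\chi_\lambda(\mu_i^v)$, which is not the flat volume.
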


\begin{remark}[Yang-Mills marginals and conditioned processes]\label{rmk:Yang-Mills_volume_conditioned_process}
As a consequence of Theorem \ref{th:Z_g_p_0} and Remark \ref{rmk:_volume_conditioned_process}, the latter formula expresses the partition function $ \operatorname{YM}(\alpha_1,\ldots,\alpha_p)$ as the probability that a random process remains in some domain and has the right boundary values. This random process consists in concatenations of Brownian motions on circle and straight paths in some frozen domains and the domain of restriction corresponds to the condition of being non-crossing for the Brownian motion and yielding $(g,p)-$honeycombs in the frozen domains. It would be interesting to see wether such interpretation allows simulations of a generical process contributing to the formula \eqref{eq:yang_mills_marginal_g_p}.
\end{remark}

\subsection*{Organisation of the paper} 

In Section \ref{subsec:canonical_measure}, 
we introduce a volume form on flows of graphs, together 
with formulas for two operations on graphs which we call  
sieving and contractions in Section \ref{subsec:sieving_contraction}.
Section \ref{sec:three_holed_sphere} is devoted to the proof of 
Theorem \ref{th:Z_g_p_0} in the 
case of the three-holed sphere which corresponds to $(g, p) = (0, 3)$.
We first show that triangular honeycombs can be viewed as flows on 
graphs in Seection \ref{subsec:parametrization_honeycombs}. Section 
\ref{subsec:dual_hive} recalls the \textit{dual hive} model introduced 
in \cite{françois2024positiveformulaproductconjugacy}. 
Sections \ref{subsec:from_dual_hive_to_honey} and \ref{subsec:from_honey_to_dual_hive} 
establish a volume preserving map from dual hives to triangular honeycombs. 
Using the result from \cite{françois2024positiveformulaproductconjugacy} 
which expresses the partition function for the three-holed sphere 
in terms of volume of hives, we prove the volume formula of 
Theorem \ref{th:Z_g_p_0} in terms of triangular honeycombs.
In Section \ref{sec:sieving_honeycombs}, we establish two 
formulas corresponding to Proposition \ref{prop:formula_gluing_honey} 
and Proposition \ref{prop:formula_contracting_honey} which express 
the volume of honeycombs constructed from the operations 
of Section \ref{subsec:sieving_contraction}.
The full proof of Theorem \ref{th:Z_g_p_0} is done in Section 
\ref{sec:proof_of_th_flat_connections}. 
We first give formulas relating the volume of moduli spaces of 
flat connections for surfaces glued together in 
Section \ref{subsec:contraction_formulas_flat_co} before 
proving the theorem in Section \ref{subsec:proof_th_flat_co} 
by relating the previous formulas with the ones of Section 
\ref{sec:sieving_honeycombs}. Finally, we prove Corollary \ref{cor:yang_mills_g_p} 
in Section \ref{sec:proof_positive_area}.

\section{Graphs, divergence of flows and volume measures}
\label{sec:volume_form_flows}

We will see a honeycomb as graph $G$ endowed
with a flow (an antisymmetric function on its
edge set).  The honeycomb
information of $G$, i.e., the precise
drawing on the equilateral triangle of size $1$, 
will be obtained
once we fix the distance of each
edge to the side of the corresponding triangle parallel to it, which are embodied by the map $L$. Is is thus important to give an explicit expression of the volume measure associated to this distances, which is the main objective of this section.

Let us consider a finite graph $G=(V,E)$
and let us denote $\vec E = \{(a,b) \in V\times V \colon
\{a,b\} \in E\}$
its oriented edge set. 
A function $\omega:\vec{E}\to\mathbb{R}$ is called antisymmetric if 
$$\omega(a,b)=-\omega(b,a).$$

Let us denote by $\Omega^0(G)$ 
the vector space of real functions 
on $V$ endowed with the inner product given by
$\langle f_1, f_2 \rangle = \sum_{x \in V} f_1(x) f_2(x)$.
The space of flows is the vector space
$\Omega^1(G)$ of antisymmetric
real functions $\omega$ on $\vec E$ endowed with the inner product
$\langle \omega_1,\omega_2 \rangle
= \sum_{e \in E} \omega_1(e) \omega_2(e) $,
where we are using that, due to the antisymmetry
of $\omega_1$ and $\omega_2$, the product
$\omega_1(e) \omega_2(e)$ does not depend
on the orientating of $e$
as long as we choose the same for both arguments.
\noindent
For any map $\phi: V\rightarrow \mathbb{R}$, set
\begin{equation}
	\label{eq:DivOfLambda}
	\mathcal{F}(\phi) = 
    \left\{\lambda\in\Omega^1(G) \mid \forall v \in V : \sum_{x \sim v} \lambda (v,x) = \phi(v)\right\}.
\end{equation}
We will recall some properties of 
the \emph{divergence} operator
used in the previous equation
and explore the space of solutions
to provide a convenient measure on $\mathcal{F}(\phi)$.

\subsection{Canonical measure}
\label{subsec:canonical_measure}

The analogue of the differential
of a function is the map
$d: \Omega^0(G) \to \Omega^1(G)$,
\[d f (a,b) = f(b) - f(a).\] 
For $x \in V$ define $\delta_{x} \in \Omega^0(G)$ 
that takes the value
 $1$ at $x$ and $0$ at other vertices, and
for $e \in \vec E$ define
$\delta_{e} \in \Omega^1(G)$ that takes the value
$1$ at $e$, $-1$ at the opposite of $e$
and $0$ at other edges. So, we may evaluate
$d$ at the basis
$\{\delta_{x} \}_{x \in V}$ of $\Omega^0(G)$ to obtain
$d \delta_{x} = -\sum_{e \in \vec E,\underline e = x} \delta_{e}$,
where $\underline {(a,b)} = a$. This tells
us that we can write $d$ in terms of 
$\delta_{e}$ and $\delta_{x}$ as
\[d = -\sum_{x \in V} 
\sum_{e \in \vec E, \underline e = x} \delta_{e} \otimes \delta_{x}\]
by identifying
$\Omega^0(G)$ with its dual. Its adjoint $d^*$
would be given by permuting the terms
\[d^* = -\sum_{x \in V} 
\sum_{e \in \vec E, \underline e = x}  \delta_{x} \otimes \delta_{e}\]
or, in a more explicit way, for 
$\omega \in \Omega^1(G)$,
\[d^* \omega (x) = - \sum_{e \in \vec E, \underline e=x}
\omega(e) .\]
We define $\mathrm {div} = -d^*$ so that 
	\eqref{eq:DivOfLambda}
rewrites as
	$\mathrm{div}\lambda = \phi$, yielding to the reformulation of the corresponding affine space 
	 \[\mathcal F(\phi)
	 = 	\{\lambda \in \Omega^1(G):
	  \mathrm{div}\lambda = \phi
	 \}.\]
	 
Let us for now assume that $G$ is connected. Remark first that the solution space may be empty since we
always have, by antisymmetry,
	$\sum_{v \in V} \mathrm{div} \lambda (v)
	= 0$
	which is reminiscent of
	Stokes' theorem. But this is the only
	restriction as explained in Proposition \ref{prop:BijectionTrees}. If
	$\lambda_0 \in \mathcal F(\phi)$ we have
	$\mathcal F(\phi) = 
	\lambda_0 + \mathcal F(0)$ and
	$\mathcal F(0)$ is precisely
	$\mathrm{Ker}(\mathrm{div})$. So, if
	$\mathcal F(\phi)$ is non-empty,
	we can calculate
	\[\mathrm{dim}(\mathcal F(\phi))
	= \mathrm{dim}(\mathrm{Ker}(\mathrm{div}))
	= |E| - \mathrm{dim}(\mathrm{Im}(d))
	= |E| - |V| + 1,\]
	where we used that $\mathrm{Ker}(d) =
	\{\mbox{constant functions}\} $
	 so that
	$ \mathrm{dim}(\mathrm{Im}(d)) = 
	\mathrm{dim}(\Omega^0(G)) - 1 = |V|-1$.

\begin{proposition} \label{prop:BijectionTrees}
	If $G$ is connected, the set
	$\mathcal F(\phi) $ is not empty if and only if
\begin{equation} \label{eq:ExistenceCondition}
\sum_{v \in V} \phi(v)=0.
\end{equation}
	Moreover, if $S \subset E$
	and  $\Omega^1(S)$ denotes the space of antisymmetric
	functions on $\vec S \subset \vec E$, the composition
	\[\varphi_S:\mathcal F(\phi) \xrightarrow[]{
	\mbox{\tiny inclusion}}
	\Omega^1(G) 
	\xrightarrow[]{
		\mbox{\tiny restriction}} \Omega^1(S)\]
	is a bijection if and only if $\mathcal F(\phi)
	\neq \emptyset$
	 and
	$(V,E \setminus S)$ is a spanning tree.
	In case $\varphi_S$ is a bijection, the matrices of $\varphi_S$ and $\varphi_S^{-1}$ in the bases $\{\hat{e},e\in E\},\{\hat{e},e\in S\}$
have integer coefficients.	
\end{proposition}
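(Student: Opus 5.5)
The plan has three parts: existence via the image of $\mathrm{div}$, bijectivity via linear algebra on the complement of $S$, and integrality via total unimodularity of the incidence matrix.

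\emph{Existence.} Since $\mathrm{div}=-d^*$, we have $\mathrm{Im}(\mathrm{div})=\mathrm{Ker}(d)^\perp$ in $\Omega^0(G)$. For $G$ connected, $\mathrm{Ker}(d)$ is the line of constant functions, so $\mathrm{Im}(\mathrm{div})=\{\phi \mid \sum_v\phi(v)=0\}$. Hence $\mathcal F(\phi)\neq\emptyset$ if and only if \eqref{eq:ExistenceCondition} holds; the necessity was already noted through antisymmetry.

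\emph{Bijectivity.} If $\varphi_S$ is a bijection, then $\mathcal F(\phi)$ is nonempty, since $\Omega^1(S)$ is nonempty. Now assume $\mathcal F(\phi)\neq\emptyset$. Because $\varphi_S$ is the restriction of a linear map to an affine space $\lambda_0+\mathrm{Ker}(\mathrm{div})$, it is a bijection exactly when the linear map $\mathrm{Ker}(\mathrm{div})\to\Omega^1(S)$ is an isomorphism. I will show that this holds if and only if $T=E\setminus S$ is a spanning tree.

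\begin{enumerate}
\item \emph{Injectivity.} A flow $\omega\in\mathrm{Ker}(\mathrm{div})$ vanishing on $S$ is a divergence-free flow supported on $(V,T)$. Such flows are spanned by cycles of $(V,T)$, so they are all zero if and only if $(V,T)$ is a forest.
\item \emph{Dimensions.} We have $\dim \mathrm{Ker}(\mathrm{div})=|E|-|V|+1$ and $\dim\Omega^1(S)=|S|$. For an isomorphism these must agree, which forces $|T|=|V|-1$.
\item \emph{Conclusion.} A forest on $V$ with $|V|-1$ edges is a spanning tree. Conversely, if $T$ is a spanning tree, step 1 gives injectivity and step 2 gives equal dimensions, hence bijectivity.
\end{enumerate}

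\emph{Integrality.} Here I interpret $\hat e$ as the basis $\delta_e$, with a fixed orientation for each edge. The forward matrix of $\varphi_S$ is a coordinate projection, so its entries are $0$ or $1$. For the inverse, write $\lambda=\lambda_S+\lambda_T$. The constraint $\mathrm{div}\lambda_T=\phi-\mathrm{div}\lambda_S$ is solved uniquely on the tree by peeling leaves: the value on each tree edge $e$ equals, up to sign, the sum of $(\phi-\mathrm{div}\lambda_S)$ over the component of $T\setminus e$ not containing the root.

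So each $\lambda(e)$ for $e\in T$ is a combination, with coefficients in $\{-1,0,1\}$, of the values $\lambda(s)$ for $s\in S$ and of $\phi$. Equivalently, for every $s\in S$ and $e\in T$, the coefficient is $\pm1$ or $0$ according to whether the fundamental cycle of $s$ in $T\cup\{s\}$ passes through $e$, and with which orientation. This matches the total unimodularity of the incidence matrix. The $\phi$-dependence is only an affine translation, so the linear parts are integer matrices.

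\emph{Main obstacle.} The only delicate point is bookkeeping: fixing orientations and making sure that $\varphi_S^{-1}$ is read as an affine map whose linear part is the fundamental-cycle matrix. I expect this to be routine.
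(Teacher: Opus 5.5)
Your proposal is correct. For existence and for the bijectivity criterion it takes a different route from the paper, and for integrality it takes the same one.

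The paper's argument is constructive throughout. It first solves $\mathrm{div}\lambda=\phi$ on a tree by peeling leaves. On a general graph it picks a spanning tree $T$, prescribes $\lambda$ freely on $S=E\setminus T$, absorbs this into a modified $\phi_{new}$, and solves uniquely on $T$. That single computation gives three things at once: sufficiency of the zero-sum condition, bijectivity of $\varphi_S$ when $E\setminus S$ is a spanning tree, and integrality. For the converse, the paper reduces to $\phi=0$. It kills injectivity with a cycle flow. For a disconnected forest it enlarges $E\setminus S$ to a spanning tree $E\setminus S'$ with $S'\subset S$, so the values on $S\setminus S'$ are forced and surjectivity fails.

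You instead obtain existence from $\mathrm{Im}(d^*)=\mathrm{Ker}(d)^\perp$. You characterize bijectivity by combining two facts: injectivity (no nonzero divergence-free flow supported on $E\setminus S$, i.e. a forest) and the count $\dim\mathrm{Ker}(\mathrm{div})=|E|-|V|+1$. This replaces the paper's case analysis with one clean count. The disconnected-forest case becomes simply $|S|>|E|-|V|+1$.

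Your route still needs leaf-peeling on the tree for the integrality claim, and that is exactly where it coincides with the paper. There your cut formula is sharper than the paper's statement: the value on a tree edge $e$ is the sum of $\phi-\mathrm{div}\lambda_S$ over one side of $T\setminus e$. It shows the linear part of $\varphi_S^{-1}$ has entries in $\{-1,0,1\}$, given by fundamental cycles.

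One small economy: for ``divergence-free flows on $(V,E\setminus S)$ all vanish iff it is a forest'' you do not need the full fact that cycles span such flows. Leaf-peeling (forest $\Rightarrow$ zero) and a single cycle flow (cycle $\Rightarrow$ nonzero) suffice.
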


\begin{proof}[Proof of Proposition
\ref{prop:BijectionTrees}]

We already know that
the condition
	\eqref{eq:ExistenceCondition}
	is necessary.
	Let us show it is sufficient.
	Consider first the case where
	$G=(V,E)$ is a \textbf{tree}. To solve
	$ \mathrm{div} \lambda = \phi$
	we look for
	the edges where $\lambda$
	is most easily determined by the boundary conditions. Let us denote by $\partial V$ the set of leaves of $G$. Then, the divergence condition yields $\lambda(e)=\phi(x)$ if $x\in \partial V$ and $e$ is the unique edge starting from $x$.

	Next, denote by $\interior{V}=V\setminus \partial V$ the vertices of $G$ of degree larger than one. Let us consider edges with an endpoint whose
	other edges connect only to leaves
	or, equivalently, the edges
	in $E\setminus \partial E$ adjacent to leaves
	of $V\setminus \partial V$. Take
	one such leaf $x \in \interior{V}$
	and notice that,
	since $x$ is not a leaf of $G$, the set $V_{1,x}$
	of vertices in $\partial V$ connected to
	$x$ is non-empty.  
	Let us reduce our task to finding
	a solution on $V \setminus V_{1,x}$ as follows.  If $e$ denotes
	the edge of $E \setminus \partial E$
	starting at $x$, using the previous case of $x$ being a leaf yields that a solution should satisfy
	\[ \phi(x)=\bigg(\sum_{y \in V_{1,x}}\lambda(x,y)
	\bigg) +\lambda(e) 
	=-\bigg(\sum_{y \in V_{1,x}}\lambda(y,x)
	\bigg) +\lambda(e)=-\bigg(\sum_{y \in V_{1,x}}\phi(y)
	\bigg) +\lambda(e)\]
	which determines $\lambda$
	at $e$ as linear combinations of different values $\phi$ with integer coefficients. Now, we consider the graph restricted to
	$V\setminus V_{1,x} $, and define
	$\phi(x) =\left(\phi(x)+
	\sum_{y \in V_{1,x}}\phi(y)\right) 
	$
	and notice that
	\eqref{eq:ExistenceCondition}
	is satisfied for $V\setminus V_{1,x}$.
	We may proceed by induction
	until the tree consists solely of leaves
	in which case
	the condition
	\eqref{eq:ExistenceCondition}
	is precisely the
	equation 	 
	$\mathrm{div}\lambda = \phi$. By this procedure
	we have seen that, for a tree, 
	in case there is a solution, it is unique, and its value at $e\in E$ is a linear combination with integer coefficients of the values $\phi(x)$ for $x\in V$.
	
	For a \textbf{general graph} $G$ we
	may take a spanning tree $T$ of $G$
	and try to solve
	the equation for $T$. 
	We may choose
	the values of $\lambda$
	arbitrarily
	at the edges that are not in $T$. More precisely,
	if $S$ is the set of edges not belonging to $T$,
	we may consider any antisymmetric function
	$\widetilde \lambda: \vec S \to \mathbb R $
	and look for a solution
	$\lambda \in \mathcal F(\phi)$
	satisfying $\lambda|_{\vec S} = \widetilde \lambda$.
	To 
	be able to forget the edges in $S$ we change $\phi$ to
	$\phi_{new}:V \to \mathbb R$ given by
		\[\phi_{\tiny new}(x) =\phi(x) -
	\sum_{e \in \vec S, \underline{e} = x}
	\widetilde \lambda(e)\]
	so that if the divergence of $\lambda$
	in $T$ at $x$ is
	$\phi_{new}(x)$, the divergence of the extension
	by $\widetilde \lambda$
	of $\lambda$
	in $G$ at $x$ would be 
	$\phi(x)$.
	Notice that the sum of $\phi_{new}(x)$
	for $x \in V$ is the same as the sum of 
	$\phi(x)$.
	This holds because each edge in $S$ is appears twice in the sum,
	once with each orientation, and thus the contribution to the sum cancels due to the antisymmetry.
	Then 
	\eqref{eq:ExistenceCondition} holds
	for $T$ and $(\phi_{new};\gamma_{new})$, and we may
	find a unique solution $\lambda \in \mathcal F(\phi_{new})$ on $T$ such that $\lambda(x)$ is a linear combination with integer coefficients of the values of $\phi_{new}$, and thus also of the values of $\phi$ and $\tilde{\lambda}$.	We extend $\lambda$ by $\widetilde 
	\lambda$ and remark that it is a solution in
	$\mathcal F(\phi)$. 
	
	On the other hand,
	if $\lambda$ is a solution in $\mathcal F(\phi)$
	its restriction to the directed edge set of $T$
	is a solution in $\mathcal F(\phi_{new})$
	so that there is only 
	one solution $\lambda$ in $\mathcal F(\phi)$
	satisfying $\lambda|_{\vec S} = \widetilde \lambda$.
	Notice that this already
	shows that $\varphi_S$ is a bijection
	when $(V,E\setminus S)$ is a spanning tree of $G$.
	\\
	\\
	It is clear that if $\varphi_S$ is a bijection
	then $\mathcal F(\phi)$ is not empty
	because $\Omega^1(S)$ would be empty which
	is impossible. So, for
	the rest of the proof we may assume that
	$\mathcal F(\phi)$ is not empty.
	By taking any $\lambda_0 \in \mathcal F(\phi)$, using that
	$\mathcal F(\phi)
	= \mathcal F(0) $ 
	and that
	the map
	$\mathcal F(\phi) \to \Omega^1(S)$
	is a composition
	\[\mathcal F(\phi)
	\xrightarrow[]{\tiny \mbox{translation by } - \lambda_0}
	\mathcal F(0)
		\xrightarrow[]{\varphi_S}
	\Omega^1(S) 
		\xrightarrow[]{\tiny \mbox{translation by }  
			\lambda_0|_{\vec S}}
	\Omega^1(S),\]
	we find
	that it is enough to prove the statements
	for $\phi=0$.
	Now, let us show that if $\varphi_S$ is a
	bijection then $(V,E\setminus S)$ is a spanning tree.
	If $(V,E\setminus S)$ had a cycle
	$(v_1,\dots,v_k, v_{k+1})$ with $v_{k+1} = v_1$ we could define
	$\lambda(v_i,v_{i+1}) = - \lambda(v_{i+1},v_i) =1$
	for $i \in \{1,\dots,k\}$ and zero elsewhere.
	Such $\lambda$ would belong to $\mathcal F(0)$
	but its image in $\Omega^1(S)$ would be zero
	so that $\varphi_S$
	would not be injective. To show that
	$(V,E\setminus S)$ is connected we may notice that if
	it were a disconnected forest we could find
	$S' \subset S$ such that
	$(V,E\setminus S')$ is a spanning tree. But this
	would imply that the values at $S'$ determine
	the solution and, thus, determine the
	values at $S \setminus S'$. Then $\varphi_{S}$
	could not be surjective. 
	
\end{proof}
\begin{proposition}
    \label{prop:divergence_volume}
Suppose that $G$ is connected. If $(V,E\setminus S)$ is a spanning tree, then for all $\phi:V\to\mathbb{R}$ satisfying \eqref{eq:ExistenceCondition},
\[(\varphi_S)_* \mathcal Leb_{\mathcal F(\phi)}	= \big(\hspace{-0.5mm} \sqrt{\# \mbox{ Spanning trees of } G}	\big)\,
	\mathcal Leb_{\Omega^1(S)}.\]
\end{proposition}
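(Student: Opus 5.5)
We reduce to $\phi=0$ by translation, as in the proof of Proposition~\ref{prop:BijectionTrees}. Translation preserves Lebesgue measure on both $\mathcal F(\phi)=\lambda_0+\mathcal F(0)$ and $\Omega^1(S)$, so it suffices to treat the linear isomorphism $\varphi_S:\mathrm{Ker}(\mathrm{div})\to\Omega^1(S)$. Here $\mathrm{Ker}(\mathrm{div})$ carries the Lebesgue measure induced by the inner product of $\Omega^1(G)$, and $\Omega^1(S)\cong\mathbb R^S$ carries the standard one (after orienting each edge). The pushforward of Lebesgue measure under a linear isomorphism is $|\det \varphi_S^{-1}|$ times Lebesgue measure, with the determinant computed in orthonormal bases. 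So the claim is $|\det\varphi_S^{-1}|=\sqrt{\#\text{spanning trees}}$, equivalently $\det(\psi^*\psi)=\#\text{spanning trees}$ for $\psi=\varphi_S^{-1}:\mathbb R^S\to \mathrm{Ker}(\mathrm{div})\subset\mathbb R^E$.

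To compute this, we describe $\psi$ explicitly as the fundamental cycle map. For $e\in S$, let $c_e\in\mathrm{Ker}(\mathrm{div})$ be the flow equal to $1$ on $e$ and circulating around the unique cycle formed by $e$ and the tree $T=(V,E\setminus S)$. By uniqueness in Proposition~\ref{prop:BijectionTrees}, $\psi(\delta_e)=c_e$. Let $C$ be the $E\times S$ matrix with columns $c_e$. Its restriction to the rows indexed by $S$ is the identity, and all its entries lie in $\{0,\pm1\}$. Then $\det(\psi^*\psi)=\det(C^TC)$.

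We evaluate $\det(C^TC)$ by Cauchy--Binet: $\det(C^TC)=\sum_{|R|=|S|}\det(C_R)^2$, where $C_R$ is the square submatrix with rows indexed by $R\subset E$. The key claim is that $\det C_R\in\{0,\pm1\}$, and that it is nonzero exactly when $E\setminus R$ is a spanning tree.

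To prove the claim, suppose first that $E\setminus R$ is a spanning tree. Then restriction to $R$ is a bijection $\mathrm{Ker}(\mathrm{div})\to\Omega^1(R)$ by Proposition~\ref{prop:BijectionTrees}. Hence $C_R$, which is that restriction expressed in the basis $(c_e)$, is invertible. The integrality statement of Proposition~\ref{prop:BijectionTrees}, applied to both $S$ and $R$, shows that $C_R$ and its inverse have integer entries, so $\det C_R=\pm1$. If instead $E\setminus R$ is not a spanning tree, the restriction to $R$ is not a bijection by the same proposition, so $\det C_R=0$. Summing gives $\det(C^TC)=\#\{\text{spanning trees}\}$, which is the claim. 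Strictly speaking, this recovers the Kirchhoff matrix-tree count in its dual (cycle-space) form.

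The main obstacle is keeping the measure conventions consistent. The Lebesgue measure on $\mathcal F(\phi)$ is the one induced by the inner product, and we must check that the pushforward factor is exactly $\sqrt{\det(\psi^*\psi)}$ rather than its inverse. This follows from the Gram determinant formula: the induced volume of the parallelepiped spanned by $\psi(\delta_e)$, $e\in S$, is $\sqrt{\det C^TC}$. Hence $\mathcal Leb_{\mathcal F(0)}=\sqrt{\det C^TC}\;\psi_*\mathcal Leb_{\Omega^1(S)}$, and applying $\varphi_S=\psi^{-1}$ gives the stated identity. The integrality part of Proposition~\ref{prop:BijectionTrees} is used precisely to obtain unimodularity of the $C_R$, which spares us a separate total-unimodularity argument.
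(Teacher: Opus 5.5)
Your proof is correct. It establishes the same identity as the paper, but works on the opposite side of the orthogonal decomposition $\Omega^1(G)=\mathrm{Ker}(\mathrm{div})\oplus\mathrm{Im}(d)$.

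The paper never writes down a basis of $\mathrm{Ker}(\mathrm{div})$.

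\begin{itemize}
\item It represents the line $\Lambda^k\mathcal F(0)$ through the complement $\mathrm{Im}(d)$, as $*(\wedge_{v\neq v_0} d\delta_v)$.
\item The coordinate of this $k$-vector on $\wedge_{e\in R}\delta_e$ is then, up to sign, a maximal minor of the reduced incidence matrix restricted to $E\setminus R$.
\item A leaf-peeling (triangularity) argument shows this minor is $\pm1$ when $E\setminus R$ is a spanning tree and $0$ otherwise.
\item This gives coordinate $\pm1$ at $S$ and squared norm equal to the number of spanning trees.
\end{itemize}

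You stay inside the cycle space and use the fundamental-cycle basis $(c_e)_{e\in S}$.

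\begin{itemize}
\item Your Cauchy--Binet expansion of $\det(C^TC)$ is the same sum of squared Pl\"ucker coordinates. Indeed $\wedge_{e\in S}c_e=\pm *(\wedge_{v\neq v_0}d\delta_v)$, since both are the vector of that line with coordinate $\pm1$ at $S$.
\item You obtain unimodularity of the minors $\det C_R$ differently, without any determinant computation. You observe that $C_R=\varphi_R\circ\varphi_S^{-1}$ and $C_R^{-1}=\varphi_S\circ\varphi_R^{-1}$ are both integral by the integrality clause of Proposition \ref{prop:BijectionTrees}.
\end{itemize}

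What each route buys:

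\begin{itemize}
\item Yours avoids the Hodge star and its sign bookkeeping. The Gram-determinant step also makes it transparent that the factor is $\sqrt{\#\text{spanning trees}}$ rather than its inverse.
\item In exchange, yours leans on the integrality part of Proposition \ref{prop:BijectionTrees}. This is harmless, since total unimodularity of fundamental-cycle matrices is standard.
\item The paper's route is self-contained at the level of incidence matrices. Through the Gram matrix of the $d\delta_v$ (the reduced Laplacian), it connects directly to the classical Kirchhoff theorem, whereas yours recovers its cycle-space form.
\end{itemize}
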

\begin{proof}
Now, assuming that $(V,E\setminus S)$
	is a spanning tree, let us look 
	for the constant $C>0$ such that
	\[(\varphi_S)_* \mathcal Leb_{\mathcal F(0)}	= C	\mathcal Leb_{\Omega^1(S)}\]
	Denoting the dimension of
	$\mathcal F(0)$ by $k = |E|-|V| +1$,
	we want to study the map $(\varphi_S)_*$ induced
	on $k$-forms. This is equivalent to looking at
	the pushforward map on $k$-vectors
	\[(\varphi_S)_*: \Lambda^k
	\mathcal F(0) \to 
	\Lambda^k \Omega^1(S),\]
	taking the dual and inverting the resulting map.
	The constant $C>0$ would be found
	by taking normalized
	vectors $w_1 \in \Lambda^k
	\mathcal F(0) $, $w_2 \in \Lambda^k \Omega^1(S)$ and 
	solving
	\[(\varphi_S)_* w_1 = \pm C^{-1} w_2\]
	or, what is the same,
	taking $C^{-1} = |\langle (\varphi_S)_* w_1 , w_2\rangle| $.
	The vector $w_2$ can be explicitly obtained
	as $\wedge_{e \in S} \delta_{e}$, where
	we have
	chosen an orientation for each edge $e \in E$ 
	and an order to perform the product.
	We recall that
	 $\delta_{e}$
	takes the value $1$ at $e$ with our chosen orientation,
	$-1$ if we reverse the orientation and
	$0$ at all other edges. To 
	explicitly construct $w_1$ is less obvious since
	we would be dealing with
	$\mathrm{Ker}(\mathrm{div})$. We
	could instead use its orthogonal complement
	$\mathrm{Im}(d)$.
	By fixing a leaf $v_0$
	of $(V,E\setminus S)$ we may consider
	$\{d \delta_{v} \}_{v \in V \setminus \{v_0\}}$,
	where we recall that
	$\delta_{v}$ is the function which is $1$ at $v$
	and $0$ elsewhere.
	Since $\sum_{v \in V \setminus \{v_0\}}
	d\delta_{v} + d\delta_{v_0} = d 1= 0$, this family
	generates $\mathrm{Im}(d)$. Now,
	we obtain an element of
	$\Lambda^k \mathcal F(0;0)$ by taking 
	$*(\wedge_{v \in V\setminus \{v_0\}} d \delta_{v})$,
	where $*$ denotes the Hodge star operator.
	We have not yet normalized this element nor shown it is non-zero but let us calculate,
	using that 
	$ \wedge_{e \in S} \delta_{e} = \pm 
	* (\wedge_{e \in E\setminus S} \delta_{e})  $,
	\[
	\big<  \hspace{-0.5mm} *(\wedge_{v \in V\setminus \{v_0\}} d \delta_{v}),
	 \wedge_{e \in S} \delta_{e} \, \big>
	 = \pm 
	 \big< \hspace{-0.5mm}  \wedge_{v \in V\setminus \{v_0\}} d \delta_{v},
	 \wedge_{e \in E\setminus S} \delta_{e} \, \big>
	 = \pm\mathrm{det}\langle d \delta_{v}, \delta_{e} \rangle
	 _{v \in V\setminus \{v_0\}, e \in E\setminus S}.
	 \]
	This determinant is a sum
	over all bijections $\sigma:E \setminus S \to V \setminus \{v_0\} $ of
	the alternating product $(-1)^\sigma \prod_{e \in E\setminus S} 
	\langle d \widehat {\sigma(e)}, \delta_{e} \rangle$,
	where the sign $(-1)^\sigma$ is only defined
	up to an overall sign. Notice that
	$\langle d \delta_{x}, \delta_{e} \rangle = \pm 1$ if and only
	if $x$ is an endpoint of $e$ and, if not,
	$\langle d \delta_{x}, \delta_{e} \rangle = 0$.
	So, for a bijection to contribute,
	the unique edge adjacent to $v_0$ should correspond
	to the unique other endpoint $v_1$ of
	this edge. Then, for every other edge adjacent to $v_1$
	we do not have a choice but to take the endpoint
	different from $v_1$. If we continue in this way
	we get that there is only
	one bijection
	that contributes and therefore
	\[
	\big< \hspace{-0.5mm}  *(\wedge_{v \in V\setminus \{v_0\}} d \delta_{v}),
	\wedge_{e \in S} \delta_{e} \, \big>
	= \pm 1.\]
	This proves in particular that 
	$*(\wedge_{v \in V\setminus \{v_0\}} d \delta_{v})$ is not
	zero so the explicit formula for $w_1$
	as the normalized $*(\wedge_{v \in V\setminus \{v_0\}} d \delta_{v})$ works.
	If $(V,E\setminus S)$ is not a spanning tree of $G$
	we already know that $\varphi_S$
	is not a bijection so that the previous inner product
	is zero. Using that
	$\{\wedge_{e \in S} \delta_{e} \}_{|S| = k}$ forms an
	orthonormal basis of
	$\Omega^1(G)$ we can calculate the norm
	as the sum of squares of inner products
	to obtain
	\[\|*(\wedge_{v \in V\setminus \{v_0\}} d \delta_{v})\|^2
	= 
	\mbox{\# Spanning trees of } G.
	\]
	This yields that 
	\[\langle (\varphi_S)_* w_1, w_2 \rangle
	=\pm \frac{1}{\sqrt{\mbox{\# Spanning trees of } G}}\]
	which implies the final statement of
	the proposition.
\end{proof}

\noindent
In the following, be set
\begin{equation} 
    \label{eq:definition_volume_form}
d\Vol=\frac{1}{\sqrt{\# \mbox{ Spanning trees of } G}} \mathcal Leb_{\mathcal F(\phi)} \ ,
\end{equation}
so that 
\begin{equation}
    \label{eq:pushforward_phi_S}
    (\varphi_S)_{*}\diff \Vol=\mathcal \Leb_{\Omega^1(S)} \ .
\end{equation}

\subsection{Boundary and sieving of graphs}
\label{subsec:sieving_contraction}

For $G=(V,E)$ a finite graph, recall that we denote by $\partial V$ (resp. $\interior(V)$) 
the set vertices of $G$ of degree $1$ (resp. degree larger than $1$) and by 
$\partial E$ the set of edges adjacent to $\partial V$. If $\phi\in \Omega ^0(G)$, 
we denote by $\partial \phi$ the restriction of $\phi$ to $\partial V$. 
Likewise, we denote by $\partial \lambda$ the restriction of $\lambda\in \Omega^1(G)$ 
to $\{(a,b),\{a,b\}\in\partial E\}$. By the previous section, 
for $R\subset  V$, $\mathcal{F}(\phi) \neq \emptyset$ if and only if 
$$\sum_{v\in R}\phi(v)=-\sum_{v\in V\setminus R}\phi(v) \ .$$

\begin{definition}[Sieving of graphs]\label{def:sieving}
Let $r\geqslant 1$. Let $G=(V,E)$ be a finite graph 
(non necessarily connected) and $W,W'\subset \partial V$ 
with a bijection $g:W\rightarrow W'$. 
The sieving of $G$ along $(R_1,R_2)$ is the graph 
$G_{W*W'}=(\tilde{V},\tilde{E})$ obtained as follows:
\begin{itemize}
\item $\tilde{V}$ is the quotient of $V$ by the equivalence relation generated by $(v,g(v))$ for $v\in W$,
\item $\tilde{E}=\pi(E)$, where $\pi:V\times V\rightarrow \tilde{V}\times \tilde{V}$ is the quotient map.
\end{itemize}
\end{definition}

\noindent
This construction informally amounts to merge $v$ and $g(v)$ for 
$v\in W$ and considering the resulting edge structure inferred by $E$. 
Since $W,W'\subset \partial V$, $\vert \tilde{E}\vert=\vert E\vert$ as long 
as each connected component of $G$ has a size at least $3$ 
(which will always by the case in the present paper). 
There is moreover a canonical bijection between $E$ and $\tilde{E}$. 
Let us closely look at the behavior of the equation \eqref{eq:DivOfLambda} 
with respect to the sieving of graphs. 
\\
In the following, if $G=(V,E)$ is a finite graph, $V=S_1\sqcup\ldots \sqcup S_r$ 
is a partition of $V$ and $\phi\in \Omega^0(G)$, we write by abuse of notation 
$(\phi_{\vert S_1},\ldots,\phi_{\vert S_r})$ instead of $\phi$ to detail the 
decomposition of $\phi$ along this partition. Moreover, we write $\mathcal{F}_{G}(\phi)$ 
instead of $\mathcal{F}(\phi)$ to emphasize that the equation \eqref{eq:DivOfLambda} 
is considered in the graph $G$.

\begin{proposition}[Product formula]\label{prop:divergence_product}
Let $G_1=(V_1,E_1),G_2=(V_2,E_2)$ be two connected finite graphs, 
such that $V_1 \cap V_2 = \emptyset$. Let
$W\subset \partial V_1, W'\subset \partial V_2$ and let
$g:W\rightarrow W'$ be a bijection.
Set $\tilde{G}=(G_1\cup G_2)_{W*W'}$ and 
let $\phi\in \Omega^0(\tilde{G})$ be such that 
$\sum_{v\in \tilde{V}}\phi(v)=0$. 
Let $\lambda\in \Omega^1(\tilde{G})$ and for $w \in W$, let us 
denote by $x_w = \lambda(\tilde{e})$ where $\tilde{e}$ 
is the unique edge of $\tilde{G}$ starting from $\tilde{w}$ 
and ending on $V_1$.
Then, 
\begin{enumerate}
    \item $\lambda\in\mathcal{F}_{\tilde{G}}(\phi)$ if and only if 
\begin{itemize}
    \item[(a)] $\sum_{v\in V_1\setminus W}\phi(v)+\sum_{w\in W} x_w=0$,
    \item[(b)] $\lambda_{\vert E_1}\in 
    \mathcal{F}_{G_1}(\phi_1)$ where 
    $\phi_1 = \left( \phi_{\vert V_1 \setminus W}, 
    \phi_W \right)$ and where $\phi_W(w) = x_w$ for $w \in W$,
    \item[(c)] $\lambda_{\vert E_2}\in 
    \mathcal{F}_{G_2}(\phi_2)$ where $\phi_2 = \left( \phi_{\vert V_2\setminus W'}, 
    \phi_W' \right)$ and where $\phi_W'(w') = \phi(w') - \phi_W(g^{-1}(w'))$ for $w' \in W'$.
\end{itemize} 
\item For $w_0\in W$, $K_1 \subset \Omega^1(G_1) $ and $
K_2\subset \Omega^{1}(G_2)$,
\begin{align*}
\Vol_{\tilde{G}}((K_1\times K_2)\cap\mathcal{F}_{\tilde{G}}(\phi)) = 
&\int_{\mathbb{R}^{\vert W\vert-1}}\Vol_{G_1}\left[K_1\cap \mathcal{F}_{G_1}(\phi_{\vert V_1},(x_w)_{w\in W\setminus W_0},y(x)))\right]\\
&\hspace{1.1cm}\cdot \Vol_{G_2}\left[K_2\cap \mathcal{F}_{G_2}(\phi_{\vert V_2},(\phi(\tilde{w})-x_w)_{w\in W\setminus \{w_0\}}),\phi(\tilde{w})-y(w))\right]dx,
\end{align*}
where $y(w)$ is the unique solution to 
$\sum_{v\in V_1\setminus W} \phi(v) + 
\sum_{w\in W\setminus \{w_0\}}x_w+y(w)=0$.
\end{enumerate}

\end{proposition}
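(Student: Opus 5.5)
Part (1) is a direct local check of the divergence equation at each vertex of $\tilde G=(G_1\cup G_2)_{W*W'}$. Since $W\subset\partial V_1$ and $W'\subset\partial V_2$, each merged vertex $\tilde w$ has exactly two incident edges: one coming from $E_1$, carrying $x_w$ in the outward orientation, and one coming from $E_2$. Every other vertex of $\tilde G$ keeps its full neighbourhood from $G_1$ or from $G_2$.

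So $\mathrm{div}\,\lambda=\phi$ on $\tilde G$ splits into three pieces:
\begin{itemize}
\item the equations at $V_1\setminus W$, which are those of $G_1$;
\item the equations at $V_2\setminus W'$, which are those of $G_2$;
\item at each $\tilde w$, the equation $x_w+\lambda(\tilde w,\cdot)_{E_2}=\phi(\tilde w)$.
\end{itemize}
Declaring $\phi_1(w)=x_w$ makes the equation at the leaf $w$ of $G_1$ automatic. Declaring $\phi_2(g(w))=\phi(\tilde w)-x_w$ makes the equation at $\tilde w$ exactly the leaf equation of $G_2$ at $g(w)$. This gives (b) and (c). Condition (a) is then the compatibility condition \eqref{eq:ExistenceCondition} for $G_1$, which is forced by Proposition \ref{prop:BijectionTrees}. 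The compatibility condition for $G_2$ follows from (a) together with $\sum_{\tilde V}\phi=0$. The converse direction is the same computation read backwards.

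For part (2), the plan is to compute volumes through spanning-tree coordinates via \eqref{eq:pushforward_phi_S}, which turns every $\Vol$ into a Lebesgue measure.
\begin{enumerate}
\item Choose spanning trees $T_1$ of $G_1$ and $T_2$ of $G_2$, with complements $S_1$ and $S_2$, such that the leaf edges at $W$ lie in $T_1$ and $T_2$. This is automatic, since leaf edges belong to every spanning tree.
\item In $\tilde G$, take the subgraph $T_1\cup T_2$ glued along $W$. It is connected with $|V_1|+|V_2|-|W|$ vertices and $|V_1|+|V_2|-2$ edges, so it contains exactly $|W|-1$ independent cycles. Delete from it the $E_1$-edges at $\tilde w$ for $w\in W\setminus\{w_0\}$. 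The result is a spanning tree $\tilde T$ of $\tilde G$: it is still connected because each $\tilde w$ stays attached through its $G_2$ edge, and the removed edges are exactly $|W|-1$.
\item The complement of $\tilde T$ is $S_1\cup S_2\cup\{e_w : w\ne w_0\}$, whose coordinates are $(\lambda|_{S_1},\lambda|_{S_2},(x_w)_{w\ne w_0})$. By \eqref{eq:pushforward_phi_S}, $\Vol_{\tilde G}$ on $\mathcal F_{\tilde G}(\phi)$ is Lebesgue measure in these coordinates.
\item Fubini in the variables $x=(x_w)_{w\ne w_0}$ finishes the argument. For fixed $x$, condition (a) determines $x_{w_0}=y(x)$. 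By part (1), the fibre is $\mathcal F_{G_1}(\phi_1)\times\mathcal F_{G_2}(\phi_2)$, parametrized by $\lambda|_{S_1}$ and $\lambda|_{S_2}$ respectively. Using \eqref{eq:pushforward_phi_S} again on each factor identifies Lebesgue measure in these coordinates with $\Vol_{G_1}\otimes\Vol_{G_2}$, and the indicator of $K_1\times K_2$ factorizes.
\end{enumerate}

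The main obstacle is step 2. One has to confirm that the chosen edge set really is a spanning tree of $\tilde G$, and that the restriction map is unimodular, so that no extra constant appears. Unimodularity should follow from the integrality statement in Proposition \ref{prop:BijectionTrees}. Concretely, the change of coordinates is triangular with $\pm1$ on the diagonal, because the fibre coordinates together with $x$ determine the remaining tree values by integer combinations. A small secondary point is the boundary case $|W|=1$, where there is no integral at all and the formula is just the product of the two volumes.
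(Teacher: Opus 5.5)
Your proposal is correct and follows the paper's proof essentially step for step. For (1) you use the same local divergence check at $V_1\setminus W$, $V_2\setminus W'$ and the merged vertices $\tilde w$. For (2) you take the same spanning tree of $\tilde G$, namely $T_1\cup T_2$ with the $E_1$-edges at $\tilde w$, $w\neq w_0$, deleted, and then apply Fubini in the coordinates $(\lambda|_{S_1},\lambda|_{S_2},x)$. The unimodularity you flag as the main obstacle is automatic: \eqref{eq:pushforward_phi_S} already identifies $\Vol_{\tilde G}$, $\Vol_{G_1}$ and $\Vol_{G_2}$ with Lebesgue measure in any spanning-tree-complement coordinates, so no additional change of variables or integrality argument is needed.
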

\begin{proof}

(1) Recall that $\lambda\in\mathcal{F}_{\tilde{G}}(\phi)$ if and only if
$$\forall x \in \tilde{V}: \sum_{e,\bar{e}=x}\lambda(e)=\phi(x) \ .$$

\noindent
Let $\lambda\in \mathcal{F}_{\tilde{G}}(\phi)$ and let $x \in V_1$. 
If $x \in V_1 \setminus W$, then 
\begin{equation*}
     \sum_{v \in V_1 : \{v, x\} \in E_1} \lambda_{\vert E_1}(v, x) 
     = \sum_{v \in \tilde{V} : \{v, x\} \in \tilde{E}} \indic_{\{v, x\} \in E_1} \lambda(v, x) 
    = \sum_{v \in \tilde{V} : \{v, x\} \in \tilde{E}} \lambda(v, x) = \phi(x) \ .
\end{equation*}
If $w \in W$, then $ \sum_{v \in V_1 : \{v, w\} \in E_1} \lambda_{\vert E_1}(v, w) = x_w$ 
as required, which gives that $\lambda_{\vert E_1}\in 
\mathcal{F}_{G_1}(\phi_{V_1\setminus W},\phi_W)$. \\
Using the divergence condition for $\phi$, one must have 
$$\sum_{v\in V_1}\phi(v)+\sum_{w\in W}x_w=0 \ .$$
Similarly, $\lambda_{\vert E_2}\in \mathcal{F}_{G_2}(\phi_{V_2\setminus W'},(y_w)_{w\in W'})$ 
with $y_w=\lambda(e)$, where $e$ is the unique edge of $\tilde{E}$ 
starting from $w$ and ending on $V_2$. 
By using the divergence condition on $\tilde{w}=\{w,g(w)\}$ for $w\in W$, 
one has $\phi(\tilde{w})=x_w+y_{g(w)}$, and thus 
$y_{g(w)}=\phi(\tilde{w})-x_w \ .$
\\
\\
Reciprocally, one checks that if $\lambda\in \Omega^1(\tilde{G})$ is such that 
$\lambda_{\vert E_1}\in \mathcal{F}_{G_1}(\phi_{V_1\setminus W},(x_w)_{w\in W})$ and 
$\lambda_{\vert E_2}\in \mathcal{F}_{G_2}(\phi_{V_2\setminus W'},(\phi(\tilde{w})-x_{g^{-1}(w)})_{w\in W})$, 
then $\lambda\in \mathcal{F}(\phi)$.
\\
\\
(2) Set $s+1=\vert W\vert=\vert W'\vert$ and write $W=\{w_0,\ldots,w_s\}$. 
Let $e_i$ be the edge of $E_1$ adjacent to $w_i$. 
Let $T_i$ be a spanning tree of $G_i$ for $i\in \{1,2\}$. 
Since each edge $\tilde{w},w\in W$ is bivalent in $\tilde{G}$, 
removing the edges $e_j$, $1\leqslant j\leqslant s$ from $T_1\cup T_2$ yields a spanning 
tree $T$ of $\tilde{G}$. 
By Proposition \ref{prop:BijectionTrees} applied to $S=T^c$, the restriction map 
$\varphi_{S}$ is a bijection from $\mathcal{F}_{\tilde{G}}(\phi)$ to $\Omega^1(S)$. 
Moreover, by Proposition \ref{prop:divergence_volume}, 
$(\varphi_S)_* \diff\Vol=\mathcal Leb_{\Omega^1(S)}$. 
Let us write $S_i=E_{i}\setminus T_i$ and $R=\{e_1,\ldots,e_{s}\}$. Then, 
$$S = T^c=[E_{1}\setminus  T_1]\cup [E_2\setminus T_2]\cup R=S_1\cup S_2\cup R \ .$$

\noindent
Let $K_1\subset \mathbb{R}^{E_1}$ and $K_2\subset \mathbb{R}^{E_2}$. 
Then, for $(t_1,t_2,x)\in\mathbb{R}^{S_1}\times \mathbb{R}^{S_2}\times \mathbb{R}^{s-1}$, 
$$\lambda = \lambda(t_1, t_2, x) = \varphi_{S}^{-1}(t_1,t_2,x)\in\mathcal{F}_{\tilde{G}}(\phi) \ .$$ 
By the previous statement, this is equivalent to the fact that 
$\lambda_{\vert E_1} \in \mathcal{F}_{G_1}(\phi_{\vert V_1\setminus W},(x_w)_{w\in R},y)$ 
where $y$ is the unique solution to $\sum_{v\in V_1\setminus R}\phi(v)+\sum_{w\in R}x_w+y=0$, 
and $\lambda_{\vert E_2} \in \mathcal{F}_{G_2}(\phi_{\vert V_2\setminus W'},
(\phi(\tilde{w})-x_{w}))_{w\in R},\phi(w_0)-y)$. 
Let $\varphi^x_{S_1}$ be the projection from $\mathcal{F}_{G_1}(\phi_{\vert V_1\setminus W},
(x_w)_{w\in R},y))$ to $\Omega^1(S_1)$ and $\varphi^x_{S_2}$ be the 
projection from $\mathcal{F}_{G_2}(\phi_{\vert V_2\setminus W'},
(\phi(\tilde{w})-x_{w}))_{w\in R},\phi(w_0)-y)$ to $\Omega^1(S_2)$. 
Since $E_i\setminus S_i$ is a spanning tree of $G_i$, each map 
$\varphi^x_{S_i}$ is bijective. 
Moreover, since then $\varphi^x_{S_1}\circ \lambda(t_1,t_2,x)$ 
is well-defined and equal to $t_1$, we have 
$$\lambda_{\vert E_1} = (\varphi^x_{S_1})^{-1}(t_1) \text{ and, likewise, } 
\lambda_{\vert E_2} =  (\varphi^x_{S_2})^{-1}(t_2) \ .$$
Therefore,
$\lambda(t_1, t_2, x) \in K_1\times K_2\cap \mathcal{F}(\phi)$ if and only if 
$(\varphi^x_{S_1})^{-1}(t_1)\in K_1$ and $(\varphi^x_{S_1})^{-1}(t_2)\in K_2$. 
Hence,
\begin{align*}
Vol(K_1\times K_2\cap\mathcal{F}(\phi)) =& 
\int_{\mathbb{R}^{S_1}\times \mathbb{R}^{S_2}\times \mathbb{R}^{s}} 
\indic_{\lambda(t_1,t_2,x)\in K_1\times K_2} \diff t_1 \diff t_2 \diff x \\
=&\int_{\mathbb{R}^s}\left(\int_{\mathbb{R}^{S_1}\times \mathbb{R}^{S_2}} 
\indic_{(\varphi_{S_1}^x)^{-1}(t_1)\in K_1} 
\indic_{(\varphi_{S_2}^x)^{-1}(t_2)\in K_2} \diff t_1 \diff t_2\right) \diff x\\
=&\int_{\mathbb{R}^{s}}\left(\int_{\mathbb{R}^{S_1}} 
\indic_{(\varphi_{S_1}^x)^{-1}(t_1)\in K_1} \diff t_1\right) \cdot 
\left( \int_{\mathbb{R}^{S_2}}\indic_{(\varphi_{S_2}^x)^{-1}(t_2)\in K_2}\diff t_2\right) \diff x\\
=&\int_{\mathbb{R}^{s}}\Vol\left[K_1\cap \mathcal{F}_{G_1}(\phi_{\vert V_1\setminus W},(x_w)_{w\in R},y) \right] \\
&\hspace{1cm}\cdot \Vol\left[K_2\cap\mathcal{F}(\phi_{\vert V_2\setminus W'}, 
(\phi(\tilde{w})-x_{w}))_{w\in R},\phi(w_0)-y)\right] \diff x \ .
\end{align*}

\end{proof}

\begin{proposition}[Contraction formula]\label{prop:divergence_contraction}
Let $G=(V,E)$ be a connected finite graph, $W,W'\subset \partial V$ with $W\cap W'=\emptyset$ and $g:W\rightarrow W'$ a bijection and set $G_{W*W'}=(\tilde{V},\tilde{E})$. Then, for $\phi\in \Omega^0(G_{W*W'})$ such that $\sum_{v\in \tilde{V}}\phi(v)=0$ and $K\subset \Omega^1(\tilde{G})$,
\begin{align*}
Vol(K\cap\mathcal{F}_{\tilde{G}}(\phi))= &\int_{\mathbb{R}^{\vert W\vert-1}} 
\Vol\left[K\cap \mathcal{F}_{G}(\phi_{\vert V\setminus (W\cup W')}, 
(x_w)_{w\in W},(\phi(\tilde{w})-x_{g^{-1}(w')})_{w\in W'})\right] \diff x \ .
\end{align*}
\end{proposition}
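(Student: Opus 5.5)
We follow the proof of Proposition \ref{prop:divergence_product}, now with a single connected graph $G$ in which the vertices $w$ and $g(w)$ are merged into bivalent vertices $\tilde w$ of $\tilde G=G_{W*W'}$. The first step is the analogue of part (1) of the product formula. For $w\in W$, let $x_w=\lambda(\tilde e)$, where $\tilde e$ is the edge of $\tilde G$ leaving $\tilde w$ that comes from the edge of $G$ adjacent to $w$. The divergence condition at $\tilde w$ then forces the value on the other edge, coming from $g(w)$, to be $\phi(\tilde w)-x_w$. Since $\tilde E$ is canonically in bijection with $E$, and every vertex of $V\setminus(W\cup W')$ keeps the same neighbourhood, we get the following characterisation: $\lambda\in\mathcal F_{\tilde G}(\phi)$ if and only if $\lambda$, seen in $\Omega^1(G)$, lies in $\mathcal F_G\big(\phi_{\vert V\setminus(W\cup W')},(x_w)_{w\in W},(\phi(\tilde w)-x_{g^{-1}(w')})_{w'\in W'}\big)$. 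Summing over $V$, the compatibility condition \eqref{eq:ExistenceCondition} for this $\phi_G$ equals $\sum_{\tilde V}\phi=0$ and holds automatically. So every $x\in\mathbb R^{W}$ is admissible in principle.

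The dimension count is where the contraction differs from the product formula. We have $|\tilde E|=|E|$ and $|\tilde V|=|V|-|W|$, so $\dim\mathcal F_{\tilde G}(\phi)=\dim\mathcal F_G(\cdot)+|W|$. This suggests integrating over all $|W|$ values $x_w$. The displayed $\mathbb R^{|W|-1}$ in the statement therefore needs checking against the case at hand. I would write the proof with the parametrisation by $(x_w)_{w\in W}$ and reconcile the exponent at the end. In the product case the loss of one dimension came from the compatibility constraint (a), which has no analogue here because $G$ is connected.

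Next comes the choice of spanning tree. Take a spanning tree $T_G$ of $G$ and set $S_G=E\setminus T_G$. The edges $e_w$ adjacent to leaves $w\in W\cup W'$ are necessarily in $T_G$. In $\tilde G$, the graph $T_G$ becomes $T_G$ with the pairs $w,g(w)$ identified, and each identification creates one cycle. Removing $e_w$ for every $w\in W$ breaks these cycles: $w$ was a leaf of $T_G$, so after removing $e_w$ the vertex $\tilde w$ is attached only through the edge coming from $g(w)$. One then checks that $\tilde T=T_G\setminus\{e_w,w\in W\}$, viewed in $\tilde G$, is a spanning tree. It is connected because each $\tilde w$ still hangs from its $W'$-side edge, and its edge count is $|\tilde V|-1$. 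By Proposition \ref{prop:BijectionTrees} and \eqref{eq:pushforward_phi_S}, $\varphi_{\tilde S}$ with $\tilde S=S_G\cup\{e_w\}_{w\in W}$ pushes $d\Vol_{\tilde G}$ to Lebesgue measure on $\mathbb R^{S_G}\times\mathbb R^{W}$, with coordinates $(t,x)$.

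The final step is Fubini. For fixed $x$, the first step together with the bijectivity of $\varphi^x_{S_G}$ on $\mathcal F_G(\phi_G(x))$ gives $\lambda(t,x)\in K$ if and only if $(\varphi^x_{S_G})^{-1}(t)\in K$. Integrating in $t$ first gives $\Vol_G[K\cap\mathcal F_G(\phi_G(x))]$ by \eqref{eq:pushforward_phi_S} on $G$, and then we integrate in $x$. I expect the main obstacle to be the spanning-tree verification, in particular making sure that $\tilde T$ contains no cycle through identified vertices. The index count of the integration variables also needs care.
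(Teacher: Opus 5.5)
Your argument is correct and is exactly the adaptation of the proof of Proposition~\ref{prop:divergence_product} that the paper intends; the paper's own proof of this statement says only that it is ``similar to'' the product formula. The three steps all go through as you describe: the characterisation of $\mathcal{F}_{\tilde G}(\phi)$ through the boundary data $x_w$, the choice of spanning tree $T_G\setminus\{e_w\}_{w\in W}$ of $\tilde G$, and the Fubini step.

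Your dimension count is also right, and you should state its conclusion outright rather than leave it to be ``reconciled''. Because $G$ is connected, the compatibility condition is automatic, so the integral must run over $\mathbb{R}^{|W|}$. This is consistent with the integrand depending on all the $x_w$, and with Proposition~\ref{prop:formula_contracting_honey} integrating over the $n$-dimensional $\mathcal{H}_{reg}$ rather than $\mathcal{H}_{reg}^{\theta}$. The exponent $|W|-1$ in the statement is therefore a slip carried over from the product formula.
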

\begin{proof}
The proof is similar to the one of Proposition \ref{prop:divergence_product}.
\end{proof}

\section{The three-holed sphere}
\label{sec:three_holed_sphere}

The goal of this section is to establish Theorem \ref{th:Z_g_p_0} 
in the case where $(g, p) = (0, 3)$, that is, for the three-holed sphere. 
Section \ref{subsec:parametrization_honeycombs} gives an 
injection of honeycombs into flows which is 
Proposition \ref{prop:parametrization_triangular_honey_first}. 
The latter thus gives a parametrization of triangular honeycombs. 
In Section \ref{subsec:dual_hive}, we recall a combinatorial model from 
\cite{françois2024positiveformulaproductconjugacy} called \textit{dual hive}. 
Section \ref{subsec:from_dual_hive_to_honey} and Section \ref{subsec:from_honey_to_dual_hive} 
show that there is an linear bijection with integer coefficients between dual hives 
and triangular honeycombs. 
We finally prove Theorem \ref{th:Z_g_p_0} 
for the three-holed sphere in Section \ref{subsec:main_th_three_holed_sphere}.

\subsection{Parametrization of triangular honeycombs}
\label{subsec:parametrization_honeycombs}

The goal of this section is to view triangular honeycombs 
as flows on a graph with prescribed divergence. 
Proposition \ref{prop:divergence_volume} then yields a volume form on this set of 
flows. Recall that for $d \geqslant 0$, $\mathcal{G}_d$ denotes 
the set of isomorphism classes of colored 
graphs with ordered boundary appearing in $\{G(h),\,h \in \honey_{n,d}\}$.
For a honeycomb $h\in \honey^G$ and an edge $e \in E$, let us denote by 
$\ell^h(e)$ the type of $e$, defined in Definition \ref{def:triangular_honey}, (2).

\begin{lemma}[Boundary determine type and colors]
    \label{lem:types_colors_determined_by_boundaries}
    Let $G\in\mathcal{G}_d$ and let $h\in \honey^G$. 
    Then, the type $\ell^h:E\rightarrow \{0,1,2\}$ 
    and color $c^h:E\rightarrow \{0,1,3\}$ are independent of $h$.
\end{lemma}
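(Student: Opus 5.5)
The plan is a propagation argument from the boundary inwards. The colored structure graph $G$ comes with an ordered boundary, and the ordering pins down which boundary vertex lies on which side $\partial_i T$. Once these are fixed, the local rules of Definitions \ref{def:toric_honeycomb} and \ref{def:triangular_honey} should force the type and color of every edge. Throughout, $h$ is an arbitrary element of $\honey^G$, and we exhibit the type and color of each edge as a function of $G$ alone.

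\textbf{Step 1: colors.} The color map is part of the data of $G$ as a colored graph. An isomorphism of colored graphs with ordered boundary preserves it. So $c^h(e)$ is just the color of the corresponding edge of $G$, and it does not depend on $h$. The substance of the lemma is therefore the type $\ell^h$.

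\textbf{Step 2: types of boundary edges.} Because the boundary of $G$ is ordered, the vertex $v^{ni+j}$ lies on $\partial_i T$ for every $h\in\honey^G$; this is the convention fixed after Definition \ref{def:triangular_honey}. Condition (3) of that definition then gives the type of the adjacent edge $e$ directly: $\ell^h(e)=i+1$ if $c(e)=0$, and $\ell^h(e)=i+2$ if $c(e)=1$, indices taken mod $3$. Both values depend only on $G$.

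\textbf{Step 3: propagation through trivalent vertices.} Each interior vertex is trivalent. The clockwise color sequence around it lies in $\{(0,0,0),(1,1,1),(0,3,1)\}$, and consecutive edges meet at angle $2\pi/3$. An edge of type $\ell$ has direction $\pm e^{2\pi i(\ell+1)/3}$, so three edges leaving a vertex at mutual angles $2\pi/3$ have three distinct types. Since each type fixes a line direction, the outgoing directions of the three edges are $u$, $ue^{2\pi i/3}$, $ue^{4\pi i/3}$ with $u$ one of six unit vectors. A given edge with type $\ell$ comes in one of two orientations, and which one occurs is decided by the position of the edge in the cyclic (clockwise) order together with its color, for instance which edge carries color $3$ in a $(0,3,1)$ vertex. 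Combining the orientation information in Figure \ref{fig:segments} with this ordering, knowing the type of one edge at a vertex determines the types of the other two as a function of the colored, cyclically ordered star of the vertex in $G$.

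There is a subtlety here: the cyclic order at a vertex is not obviously part of the abstract graph $G$. I would recover it from the colors: at a $(0,3,1)$ vertex the colors are all distinct, and at a monochromatic vertex the two possible cyclic orders give the same types, because three distinct types in $\{0,1,2\}$ are forced either way. Then $\ell$ takes the same value on both sides of each edge, and on a monochromatic vertex the other two edges simply receive the remaining two types.

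\textbf{Step 4: conclusion.} I expect every connected component of $G$ to contain a boundary vertex. Indeed, an interior component would be a closed trivalent honeycomb inside $T$, and by the convexity argument of Knutson--Tao such a piece cannot exist. So a breadth-first search from the boundary edges determines $\ell$ on every edge.

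The main obstacle is Step 3, and in particular the monochromatic vertices. There a vertex might seem to allow two type assignments for the two unknown edges, e.g.\ types $(\ell,\ell+1,\ell+2)$ versus $(\ell,\ell+2,\ell+1)$. I would resolve this with the orientation of $T$: for colors $0$ (resp.\ $1$), the angle condition of Definition \ref{def:toric_honeycomb} (2) with $\widehat{(e,e')}=2\pi/3$ measured with $\det(u,u')>0$ distinguishes the two. Failing that, I would fall back on a global argument. The heights $L(e)$ satisfy linear relations at each vertex, since the three type-coordinates sum to $1$. If two honeycombs with the same $G$ had different types, a continuity or path argument inside the convex set $\honey^G$ would force some edge to change direction, which is impossible because types are discrete.
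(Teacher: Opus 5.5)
Your Steps 1, 2 and 4 are fine. Step 1 is in fact simpler than what the paper does: the colors are part of the data of $G$, so only the types need an argument.

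The gap is in Step 3. At a trivalent vertex the three types are distinct, so \emph{two} known types force the third. But \emph{one} known type $\ell$ only tells you that the other two edges carry $\ell+1$ and $\ell+2$, not which carries which. What would decide this is the cyclic order of the edges in the drawing: for either sign $s(v)$, the types $0,1,2$ appear counterclockwise around every trivalent vertex. The abstract colored graph $G$ records this order only at $(0,3,1)$ vertices, where the oriented angle condition does tie the types to the colors. At $(0,0,0)$ and $(1,1,1)$ vertices, your claim that the two cyclic orders ``give the same types'' is false. They give the same \emph{set} of types with the two unknown edges swapped, and that swap is exactly what has to be excluded.

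Neither of your repairs works:
\begin{enumerate}
\item The oriented angle condition of Definition~\ref{def:toric_honeycomb}(2) is cyclically symmetric for equal colors: $\widehat{(e_0,e_1)}=\widehat{(e_1,e_2)}=\widehat{(e_2,e_0)}=2\pi/3$ for edges of types $0,1,2$. So it holds whichever abstract edge is drawn in which direction, and carries no information.
\item The continuity argument needs $\honey^G$ to be connected, and you have no independent proof of that. In the paper, the convex description of $\honey^G$ (Corollary~\ref{cor:paramietrization_label}) goes through the flow $\mathcal{L}[h]$. Its signs $s(v)$ are shown to depend only on $G$ by this very lemma, so invoking it here is circular.
\end{enumerate}

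The missing idea is to propagate only through a vertex where two of the three edges already have known type, together with an argument that such a vertex always exists. The paper inducts on the number of inner vertices of ``admissible'' honeycombs on subsets of $T$.
\begin{itemize}
\item If some inner vertex carries two boundary edges, the type of its third edge is forced, and you delete those two edges.
\item Otherwise, delete all boundary edges and consider the component $K_0$ of $T\setminus t(\tilde h)$ adjacent to $\partial T$. Its inner boundary is a closed polygonal line. Seen from $K_0$, its corners have angle $4\pi/3$ exactly at the attachment points $w_i$ of the deleted edges, and $2\pi/3$ elsewhere.
\item A turning-number count shows the enclosed polygon has six more convex than reflex corners. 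This forces two consecutive corners $w_i,w_{i'}$, joined by an edge whose type is the third one relative to the two deleted edges at its ends. The remaining edges at $w_i$ and $w_{i'}$ then also get determined types, and the induction continues.
\end{itemize}
If you want to keep one-edge propagation instead, you would have to recover the rotation system at monochromatic vertices globally. One option is a Whitney-type uniqueness of the planar embedding once the ordered boundary is closed into a cycle, but that needs a $3$-connectivity argument your proposal does not contain.
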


\begin{proof}
    Remark that such label and color maps can be defined similarly for any honeycomb $h=(\mathcal{E},c)$ of $S\subset T$ such 
    that any segment $e\in \mathcal{E}$ is contained in some ray $\{x+\mathbb{R}e^{2(\ell+1)i \pi/3}\}$ 
    for some $\ell\in\{0,1,2\}$. 
    Let us call such a honeycomb \textit{admissible} and let us prove by 
    induction on the number $M$ 
    of inner vertices (that is, the number of vertices of $G(h)$ which are in the interior of $T$) the following : 
    \textit{for any admissible honeycomb $h$, the induced label and color map on $G(h)$ 
    only depends on the type and color map of the boundary edges and on the 
    order on the boundary vertices}.
    \\
    \\
    If $M=1$, then all edges of $G[h]$ are boundary edges, and the assertion holds. 
    Let $M>1$ and let $h$ be a honeycomb such that $G(h)=(V,E)$ has $M$ inner vertices. Suppose that there is $v\in \interior (V)$ which is adjacent to two boundary edges 
    $e_1=\{v,v_1\},e_2=\{v,v_2\}$ and one non boundary edge $e$. 
    Then, the type $e$ is uniquely determined by the relation 
    $\{\ell(e),\ell(e_1),\ell(e_2)\}=\{0,1,2\}$ 
    and the value of $\ell(e_1)$ and $\ell(e_2)$. 
    Next, since the cyclic order of the boundary vertices is given, 
    by Definition \ref{def:triangular_honey} the color $c(e)$ is uniquely determined 
    by $c(e_1)$ and $c(e_2)$. Hence, on $\tilde{G}=(V\setminus \{v_1,v_2\}, 
    E\setminus \{e_1,e_2\})$ 
    the type and color of the boundary edges is known, and by induction, 
    the type and colors of all edges of $\tilde{G}$ only depends on the graph 
    structure and their value on the boundary. 
    \\
    \\
    Suppose that all vertices of $G$ are adjacent to at most one boundary edge. 
    Let $(v_1,\ldots, v_m)$, $m\geqslant 1$ be the boundary vertices in the cyclic order. 
    By hypothesis, there exist $(w_1,\ldots,w_m)$ such that 
    $\partial E=\{ e^i := \left\{v_i,w_i\right\} ,1\leqslant i\leqslant m\}$ 
    and $w_i\not =w_j$ when $i\not=j$. We claim that there exists $w_{i},w_{i'}$ such that 
    $\{w_{i},w_{i'}\}\in E$ and $\{\ell(\{w_{i},w_{i'}\}),\ell(e_{i}),\ell(e_{i'})\}=\{0,1,2\}$. 
    Let $\tilde{G}=(V\setminus V_{1},E\setminus \partial E)$ and $\tilde{h}=(\mathcal{E}\setminus \partial \mathcal{E},c_{\vert \mathcal{E}\setminus \partial \mathcal{E}})$ where $\partial \mathcal{E}$ are the segments corresponding to edges of $\partial E$. 
    Then, $t(\tilde{h})=\bigcup_{e\in \mathcal{E}\setminus \partial \mathcal{E}}e \subset \overset{\circ}{T}$, and thus 
    there exists a unique connected component $K_0$ in $T\setminus t(\tilde{h})$ which is adjacent to $\partial T$. 
    Let $L$ be a boundary component of $K_0$ in $\overset{\circ}{T}$. Then, $L$ is a close polygonal line with vertices 
    $\{z_1,\ldots,z_p\}$ enumerated in the cyclic order. 
    At each $z_i$, $L$ has an angle $\alpha_i$ so that $\alpha_i=4\pi/3$ if $z_i\in \{w_1,\ldots,w_n\}$.

    Since $L$ is a close polygonal curve, there are at least two consecutive vertices 
    $z_i,z_{i+1}$  such that $\alpha_i=\alpha_{i+1}=4\pi/3$, and thus $z_i=w_{j_i}$ and $z_{i+1}=w_{j_{i+1}}$ for some $1\leqslant i_j\not=i_{j+1}\leqslant m$. 
    Moreover, $[w_{j_i},w_{j_{i+1}}]$ is a segment, and thus $\{w_{j_i},w_{j_{i+1}}\}\in E$. 
    Since the angle from $\{v_{j_i},w_{j_i}\}$ (resp. $\{v_{j_{i+1}},w_{j_{i+1}}\}$) to 
    $\{w_{j_i},w_{j_{i+1}}\}$ 
    is $-2\pi/3$ (resp. $2\pi/3$),
    $$\left\{ \ell(\{w_{j_i},w_{j_{i+1}}\}), \ 
    \ell(\{v_{j_i},w_{j_i}\}), \ 
    \ell(\{v_{j_{i+1}},w_{j_{i+1}}\})\right\} = \{0,1,2\}.$$

    \noindent
    Let $w_{i},w_{i'}$ be such that $e:=\{w_{i},w_{i'}\}\in E$, $e^1:=\{w_{i},w_{i'}\}$ 
    and $e^2:=\{v_i,w_i\}$ satisfy the angle condition $\{\ell(e^1),\ell(e^2),\ell(e^3)\} 
    =\{0,1,2\}$. 
    Then, $\ell(e^3)$ is determined by $\ell(e^1)$ and $\ell(e^2)$. 
    Let $f^1,f^2$ be the third edge around $w_i$ (resp. $w_{i'}$). 
    Then, $\ell(f^i)$ is determined by $\ell(e^i)$ and $\ell(e^i)$ for $i \in \{1,2\}$. 
    By the color condition from Definition \ref{def:toric_honeycomb}, 
    $c(e)=3$ if $c(e^1)\not=c(e^2)$ and otherwise $c(e)=c(e^1)=c(e^2)$. 
    Then, $c(f^1)$ and $c(f^2)$ are uniquely determined by $\{c(e), c(e^2),c(e^2)\}$. 
    Let $\hat{G} = \left(V\setminus \{v_i,v_{i'}\},E\setminus \{e,e^1,e^2\}\right)$. 
    Then, $\hat{h}=(\mathcal{E}\setminus \{e, e^1, e^2\},c_{\vert \mathcal{E}\setminus \{e, e^1, e^2\}})$ is a honeycomb such that
    $G(\hat{h}) = \hat{G}$ has $M-1$ inner vertices, and such that the type and color 
    of the boundary edges are known. By induction, the type and color of all edges of $\hat{G}$, 
    and thus of $G$ are known. 
\end{proof}

\noindent
Let us provide a description of honeycombs with structure graph $G$ in terms of flows. 
Suppose that $G=(V,E)\in \mathcal{G}_d$. 
    By the condition (1) Definition \ref{def:triangular_honey}, 
    $G$ has only vertices of degree $1$ or $3$ and thus $v$ has three adjacent 
    edges $e_\ell,\, \ell\in \{0,1,2\}$. 
    Denote by $\interior(V)$ the set of vertices of degrees $3$ and let $v\in\interior(V)$. 
    By (2) of Definition \ref{def:toric_honeycomb}, 
    the angle between two successive edges at $v$ is $2\pi/3$ and by (2) of 
    Definition \ref{def:triangular_honey}, 
    each edge is oriented along $e^{2(\ell+1)i\pi/3}$ for some $\ell\in\{0,1,2\}$. 
    Hence, there exists a sign $s(v)\in\{-1,+1\}$ such that, up to a relabeling, 
    $e_\ell \subset \left\{v-s(v)\ed^{2(\ell+1)i\pi/3} \mathbb{R}_{\geqslant 0} \right\}$. 
    For a univalent vertex $v\in\partial V$ connected to a unique trivalent vertex 
    $v' \in \interior(V)$, we set $s(v) = -s(v')$.
    \\
    \\
    Let $v,v'\in V$ be such that $e = \{v,v'\} \in E$. 
    By the previous reasoning, there exist $\ell,\ell'$ such that 
    $e\subset \left\{v-s(v)\ed^{2(\ell+1) i\pi/3} \mathbb{R}_{\geqslant 0} \right\}$ and 
    $e\subset \left\{v'-s(v')\ed^{2(\ell'+1)i\pi/3} \mathbb{R}_{\geqslant 0} \right\}$.  
    Necessarily, $\ell=\ell'$ and $s(v)=-s(v')$, see Figure \ref{fig:adjacent_vertices}.

    \begin{figure}[ht]
        \centering
        \includegraphics[scale=1]{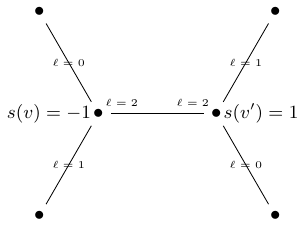}
        \caption{Two adjacent vertices in $\interior(V)$ for which $\ell = \ell'=2$.}
        \label{fig:adjacent_vertices}
    \end{figure}
    \noindent
    In particular, the map $s:v\mapsto s(v)$ only depends on the value of $s$ on the 
    boundary $\partial V$. Since $s(v)$ for $v\in \partial V$ is given by the 
    type and thus the color of the unique adjacent edge, 
    see Definition \ref{def:triangular_honey}.(3), 
    we deduce by Lemma \ref{lem:types_colors_determined_by_boundaries} 
    that $s$ only depends on $G$. 
    In the following definition, recall the definition of the height 
    of an edge in Definition \ref{def:triangular_honey}.
    
 \begin{definition}[Flow of a honeycomb]
    \label{def:flow_honeycomb}
    The \textit{flow} of a honeycomb 
    $h\in\honey_{n,d}^G$ is the map $\mathcal{L}[h] \in \Omega^1(G)$ 
    which to an oriented edge $(v,v') \in \vec E$ associates $\mathcal{L}[h](v,v') = s(v) L(\{v,v'\})$.
\end{definition}

\noindent
If $v\in \interior(V)$, $v$ corresponds to a point if $T$ and thus 
its coordinates $(v_1,v_2,v_3)$ satisfies $v_1+v_2+v_3=1$. 
If $e_1,e_2,e_3$ are the three edges adjacent to $v$, we have by 
Definition \ref{def:triangular_honey}(2), $L(e_1)+L(e_2)+L(e_3)=1$.
Hence, considering now oriented edges yields 
\begin{equation}
    \label{eq:honeycomb_divergence_1}
    \forall v \in \interior(V): \sum_{v\sim v'}\mathcal{L}[h](v,v')=s(v) \ .
\end{equation}

\noindent
Let $(\alpha, \beta, \gamma) \in \mathcal{H}_{reg}^3$ and let $G\in\mathcal{G}_d$. 
Recall that for a structure graph $G$, its set of boundary edges
$\partial E = \left\{ \{v, v'\} \mid v \in \partial V \right\}$ consists of edges having an endpoint of degree one. Since by Definition \ref{def:triangular_honey}, 
$\partial V \subset \partial T$, 
we may write $\partial E = \left( e_1, \dots, e_{3n} \right)$, where for 
$0 \leqslant l \leqslant 2$
and $1 \leqslant i \leqslant n$, the edge
$e_{\ell \, n + i}$ is adjacent to $v^{\ell n+ i}$ on $\partial_\ell T$.
Let us denote by 
$g^{\alpha,\beta,\gamma} \in \mathbb{R}^{\partial V}$ the boundary condition
given by 
$$ g^{\alpha,\beta,\gamma}_{\ell n+i} \coloneqq g^{\alpha,\beta,\gamma}(e_{\ell \, n + i}) 
= 
\left\lbrace 
\begin{aligned} 
    &\beta_i&\text{if }c(e_{\ell n+i})=0 \\ 
    &\beta_i-1&\text{if }c(e_{\ell n+i})=1
\end{aligned} \right . $$
for $\ell = 0$ and $1\leqslant i\leqslant n$ and replacing 
$\beta_i$ by $\alpha_i$ (resp. $\gamma_i$) 
when $\ell = 1$ (resp. $\ell = 2$). Remark that for $v\in \partial V$, $s(v)=1$ 
if and only if $c(e)=1$, where $e$ is the unique edge adjacent to $v$, 
see for example Figure \ref{fig:boundary_toric_honey}. 
Hence, the boundary condition translates into the condition 
\begin{equation}
    \label{eq:honeycomb_divergence_2}\mathcal{L}[h](v_{\ell n+i},v') 
    = g^{\alpha,\beta,\gamma}_{\ell n+i} \ ,
\end{equation}
where $v'$ is the unique vertex of $V$ adjacent to $v_{\ell n+i}$. 
Hence, setting $\phi_{\alpha,\beta,\gamma}(v)=s(v)$ for $v\in \interior(V)$ 
and $\phi_{\alpha,\beta,\gamma}(v_{\ell n+i})=g^{\alpha,\beta,\gamma}_{\ell n+i}$ 
for $v^{\ell n+i}\in \partial V$, \eqref{eq:honeycomb_divergence_1} 
and \eqref{eq:honeycomb_divergence_2} yields that for $h\in \honey_{n,d}^G(\alpha,\beta,\gamma)$,
$$\mathcal{L}[h] \in \mathcal{F}_{G}(\phi_{\alpha,\beta,\gamma}) \ .$$

\begin{proposition}[Honeycomb injection]
    \label{prop:parametrization_triangular_honey_first}
    Let $d\geqslant 0$ and $G\in \mathcal{G}_d$. 
    The map $\mathcal{L}: h\rightarrow \mathcal{L}[h]$ is an injective 
    map from $\honey^G$ to $\Omega^1(G)$ and 
    $$\mathcal{L} \left(\honey_{n,d}^G(\alpha,\beta,\gamma)\right) 
    \subset  \mathcal{F}_{G}(\phi_{\alpha,\beta,\gamma}) \ .$$
\end{proposition}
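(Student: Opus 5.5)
The inclusion $\mathcal{L}(\honey_{n,d}^G(\alpha,\beta,\gamma))\subset\mathcal{F}_G(\phi_{\alpha,\beta,\gamma})$ has essentially been established just before the statement. I will only recheck it. Antisymmetry of $\mathcal{L}[h]$ follows from $s(v)=-s(v')$ for adjacent vertices, which was shown above (see Figure \ref{fig:adjacent_vertices}). The divergence at interior vertices is \eqref{eq:honeycomb_divergence_1}, coming from $v_0+v_1+v_2=1$ and the fact that the three edges at $v$ have the three distinct types. At a univalent vertex, the divergence is the single term given by \eqref{eq:honeycomb_divergence_2}. For this I check that $s(v)L(e)$ equals $\beta_i$ or $\beta_i-1$ according to the colour, using Definition \ref{def:triangular_honey}(3), which ties the colour to the type of a boundary edge. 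The sign $s$ depends only on $G$, by Lemma \ref{lem:types_colors_determined_by_boundaries}, so $\phi_{\alpha,\beta,\gamma}$ is well defined on $G$ and the inclusion holds.

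For injectivity, the plan is to reconstruct $h$ from $\mathcal{L}[h]$ and $G$. First, $G$ together with $s$ (fixed by $G$) recovers the heights $L(e)=s(v)\mathcal{L}[h](v,v')$ for every edge. Again by Lemma \ref{lem:types_colors_determined_by_boundaries}, the type $\ell(e)$ and the colour $c(e)$ of each edge depend only on $G$. Next, take a trivalent vertex $v$ with incident edges $e_0,e_1,e_2$ of types $0,1,2$. Each edge $e_\ell$ lies on the line $\{x_\ell=L(e_\ell)\}$, so the barycentric coordinates of $v$ are $(L(e_0),L(e_1),L(e_2))$; these are consistent since they sum to $1$. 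Thus every interior vertex is located as a point of $T$. A boundary vertex $v$ lying on $\partial_iT$ is the intersection of the line carrying its edge with the side $\{x_i=0\}$. Because the type of that edge is $i+1$ or $i+2$, this line is transverse to the side, so the intersection is a single point, determined by $L(e)$. Hence all vertex positions are determined by $\mathcal{L}[h]$. The segments of $h$ are then the segments between the positioned endpoints of each edge of $G$, and the colours are given by $c$. So $h=(\mathcal{E},c)$ is recovered, and two honeycombs in $\honey^G$ with the same flow coincide.

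The main obstacle is making precise that $G(h)$, as an abstract coloured graph with ordered boundary, really fixes which edge of $h$ corresponds to which edge of $G$. Without this, "same flow" would not be well defined. To settle it, I will use the identification of $\honey^G$ via the fixed edge set: elements of $\honey^G$ come with an isomorphism to $G$, and the isomorphism is canonical because the boundary vertices are ordered and $G$ is connected through trivalent vertices with a prescribed cyclic colour order. Concretely, I will show that an automorphism of $G$ fixing the boundary pointwise and preserving the colours and the clockwise order at each vertex must be the identity. The proof propagates inward from the boundary edges, peeling vertices exactly as in the induction of Lemma \ref{lem:types_colors_determined_by_boundaries}.
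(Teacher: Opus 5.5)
Your argument is essentially the paper's proof. You recover the heights from the flow via the sign $s$ (fixed by $G$), get types and colours from Lemma \ref{lem:types_colors_determined_by_boundaries}, place each trivalent vertex at $(L(e^0),L(e^1),L(e^2))$, and place each univalent vertex by the one free coordinate on the side singled out by the boundary order; this determines $h$.

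The step you call the main obstacle is not needed for injectivity. The paper works with arbitrary isomorphisms $\iota_1,\iota_2$ from $G$ to $G(h_1),G(h_2)$, and the same reconstruction gives $\iota_1(v)=\iota_2(v)$ for every $v$. Uniqueness of the identification therefore only matters for $\mathcal{L}$ being canonically defined. Also, isomorphisms of coloured graphs with ordered boundary are not assumed to respect the clockwise order, so that hypothesis in your side argument would need its own justification.

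One caution on your recheck of the inclusion. With $s(v)=1$ exactly when $c(e)=1$ on $\partial V$, you actually get $s(v)L(e)=-\beta_i$ for colour $0$ and $1-\beta_i$ for colour $1$, i.e. $-g^{\alpha,\beta,\gamma}$. So the boundary values in \eqref{eq:honeycomb_divergence_2} and in $\phi_{\alpha,\beta,\gamma}$ must carry the opposite sign for the inclusion to hold. This is a sign slip in the paper's conventions that you inherit, not a defect of your strategy.
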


\begin{proof}
The fact that $\mathcal{L} \left(\honey_{n,d}^G(\alpha,\beta,\gamma)\right) 
\subset  \mathcal{F}_{G}(\phi_{\alpha,\beta,\gamma})$ is given by the previous discussion. 
Let us prove the injectivity of the map. Let $h_1,h_2\in \honey^G$ such that 
$\mathcal{L}[h_1] = \mathcal{L}[h_2]$. 
Since $G=(V,E)$ is isomorphic to the structure graph of both $h_1$ and $h_2$, 
there are two isomorphisms $\iota_i:G\rightarrow (V^i,\mathcal{E}^i)$, $i\in\{1,2\}$, 
where $\mathcal{E}^i$ is the set of segments associated to $h_i$ 
by Definition \ref{def:toric_honeycomb} and $V^i$ are the endpoints of these segments. 
Since both $h$ and $h'$ are honeycombs on the equilateral triangle, 
for which there exists a unique segment between two points, 
and $(V^1,\mathcal{E}^1)$ is isomorphic to $(V^2,\mathcal{E}^2)$, it suffices to show the equality $V^1 = V^2$. 
\\
\\
Let $v\in V$. First, suppose that $v\in \partial V$. Since the boundary 
$\partial V \simeq (v^1,\ldots,v^{3n})$ 
of $G$ is ordered, there exists $1\leqslant j\leqslant 3n$ such that $v=v^j$ and there exists a 
unique edge $e\in E$ adjacent to $v$. 
Since $(V^1, \mathcal{E}^1)$ and $(V^2,\mathcal{E}^2)$ are isomorphic to $G$ 
as colored graphs with ordered boundary, $\iota_i(v)$ is the $j$-th vertex of the 
boundary of $h_i$ and $c(\iota_i(e))=c(e)$ for $i \in \{1,2\}$. 
By Definition \ref{def:triangular_honey}, $\iota_1(v)$ and $\iota_2(v)$ belong to the 
same boundary $\partial_\ell$ of $T$ and their $(\ell+1)-$coordinates are 
\begin{align*}
\iota_1(v)_{\ell+1}=\delta_{c(\iota_1(e))=1} + (-1)^{\delta_{c(\iota_1(e))=1}} L(\iota_1(e)) 
=&\delta_{c(e)=1}+(-1)^{\delta_{c(e)=1}}L(e)\\
=&\delta_{c(\iota_2(e))=1}+(-1)^{\delta_{c(\iota_2(e))=1}} L(\iota_2(e)) = \iota_2(v)_{\ell+1}.
\end{align*}
Hence, $\iota_1(v) = \iota_2(v)$.
\\
\\
Suppose that $v\in \interior(V)$. By Definition \ref{def:triangular_honey}, 
$v$ is a trivalent vertex 
and they are three edges $e^0,e^1,e^2$ adjacent to $v$. 
Moreover, by Lemma \ref{lem:types_colors_determined_by_boundaries}, 
the type and color of $e^i$ is given by the graph structure and the type 
and color of the boundary edges. 
Suppose without loss of generality that for $1 \leqslant i \leqslant 2$, $\ell(e^i)=i$. Then,
$$\iota_1(v)=(L(e^0),L(e^1),L(e^2))=\iota_2(v) \ .$$
We deduce that $V^1=\iota_1(V) = \iota_2(V)=V^2$ and thus $h_1=h_2$. 
\end{proof}

\subsection{Dual hive}
\label{subsec:dual_hive}

Let us recall the definition of a \textit{dual hive} from 
\cite{françois2024positiveformulaproductconjugacy}. 
For $n \geqslant d \geqslant 0$, let us consider the graph 
$H_{d, n} = (R_{d, n}, E_{d, n})$ with vertices $R_{d, n}$ and edges $E_{d, n}$. 
Each vertex $v=r+se^{\pi i/3}\in H_{d,n}$ comes with a coordinate 
$(v_0,v_1,v_2)=(n+d-r-s,r,s)$. 
Each edge $e$ of $H_{d,n}$ written  $e=(v,v - e^{2\pi i\ell/3})$ with $\ell\in\{0,1,2\}$ is 
labelled $(\ell(e),h(e))\in\{0,1,2\}\times \{0,\ldots,n+d\}$ with 
\begin{equation}
    \label{eq:def_type_height_dual_hive}
    \ell(e)=\ell \text{ and } h(e)= v_{\ell} \ .
\end{equation}

\noindent
Table \ref{tab:edge_types} below summarizes the different edge types for dual hives 
and honeycombs; edges of dual hives will be in duality with edges of honeycombs with the same type. 

\begin{table}[ht]
    \centering
\begin{tabular}{c | c | c}
    Type $\ell$ & $e = \left(v, v - e^{2\pi i\ell/3}\right) \in E_{n, d}$ 
    & $e \subset x + \ed^{2i \pi (\ell+1)/3} \in \honey_{n, d}$ \\
\hline
0 &
\begin{tikzpicture}[scale=1.5, baseline=(current bounding box.center)]
    \node at (0,0) {$\bullet$};
    \node[right] at (0,0) {$v$};
    \node at (-1,0) {$\bullet$};
  \draw[black] (0,0) -- ++(-1,0);
\end{tikzpicture}
&
\begin{tikzpicture}[scale=1.5, baseline=(current bounding box.center)]
  \draw[black, thin] (0,0) -- ++(-0.5,{sqrt(3)/2});
  \node at (0,0) {$\bullet$};
  \node at (-0.5,{sqrt(3)/2}) {$\bullet$};
\end{tikzpicture}
\\
\hline
1 &
\begin{tikzpicture}[scale=1.5, baseline=(current bounding box.center)]
    \draw[black, thin] (0,0) -- ++(-0.5,{sqrt(3)/2});
  \node at (0,0) {$\bullet$};
  \node[left] at (-0.5,{sqrt(3)/2}) {$v$};
  \node at (-0.5,{sqrt(3)/2}) {$\bullet$};
\end{tikzpicture}
&
\begin{tikzpicture}[scale=1.5, baseline=(current bounding box.center)]
    \draw[black, thin] (0,0) -- ++(0.5,{sqrt(3)/2});
  \node at (0,0) {$\bullet$};
  \node at (0.5,{sqrt(3)/2}) {$\bullet$};
\end{tikzpicture}
\\
\hline
2 &
\begin{tikzpicture}[scale=1.5, baseline=(current bounding box.center)]
    \draw[black, thin] (0,0) -- ++(0.5,{sqrt(3)/2});
  \node at (0,0) {$\bullet$};
  \node[left] at (0,0) {$v$};
  \node at (0.5,{sqrt(3)/2}) {$\bullet$};
\end{tikzpicture}
&
\begin{tikzpicture}[scale=1.5, baseline=(current bounding box.center)]
      \node at (0,0) {$\bullet$};
    \node at (-1,0) {$\bullet$};
  \draw[black] (0,0) -- ++(-1,0);
\end{tikzpicture}
\end{tabular}
\vspace{0.2cm}
\caption{Edge types in dual hives and honeycombs.}
\label{tab:edge_types}
\end{table}

\begin{definition}[Color map]
    A \textit{color map} is a map $C: E_{n,d} \rightarrow \{ 0, 1, 3, m \}$ such that 
    the boundary colors around each triangular face in the clockwise order is either 
    $(0,0,0), (1,1,1), (1,0,3)$ or $(0,1,m)$ up to a cyclic rotation.
\end{definition}

\begin{definition}[Non-degenerate dual hive]
\label{def:limit_dual_hive}
    For $(\alpha, \beta, \gamma) \in \mathcal{H}_{reg}^3$, 
    such that $|\alpha| + |\beta| = |\gamma| + d$,
    the set of \textit{dual hives}, 
    denoted by $\dualhive(\alpha, \beta, \gamma)$, 
    is the set of pairs $(C, L)$ such that :
    \begin{enumerate}
        \item $C: E_{n,d} \rightarrow \{ 0, 1, 3, m \}$ is a color map,
        \item $L: E_{n,d} \rightarrow \R_{\geqslant 0} $ is the label map satisfying 
        \begin{enumerate}
            \item $L(e_1) + L(e_2) + L(e_3) = 1 $ for every triangular face of $H_{d, n}$,
            \item if $e,e'$ are edges of same type on the boundary of a same lozenge $f$,
            \begin{enumerate}
                \item $L(e)=L(e')$ if the middle edge of $f$ is colored $m$,
                \item $L(e)> L(e')$ if $h(e)> h(e')$ and the middle edge of $f$ is not colored $m$.
            \end{enumerate}
            \item The values of $L$ on $\partial E_{n,d}$ are given 
            by $(\alpha, \beta, \gamma)$ so that, sorted in decreasing height 
            of edges, see Figure \ref{fig:boundary_limit_hive} below.
            \begin{align*}
                \ell^{(0, 1)} &= (1 - \alpha_d, \dots, 1 - \alpha_1),
                &\ell^{(2, 2)} &= (\alpha_{d+1}, \dots, \alpha_{n}) \\
                \ell^{(2, 0)} &= (1 - \beta_d, \dots, 1 - \beta_1),
                &\ell^{(1, 1)} &= (\beta_{n}, \dots, \beta_{d+1}) \\
                \ell^{(1, 2)} &= (\gamma_n, \dots, \gamma_{n-d+1}), 
                &\ell^{(2, 2)} &= (1-\gamma_{n-d}, \dots, 1- \gamma_{1}).
            \end{align*}
            Moreover, for $\ell \in \{0,1,2\}$, the values of the color map $C$ on  
            $\partial^{(\ell, \ell)}$ is set to $0$ while equal to $1$ on other boundary edges. 
            We call the triple $(\alpha, \beta, \gamma)$ the \textit{boundary} of $L$, or 
            of the dual hive.
        \end{enumerate}
    \end{enumerate}
\end{definition}

\noindent
Figure \ref{fig:ex_limit_dual_hive} shows an example of a dual hive for $d=1$ and $n=3$ 
with boundary 
\begin{equation}
    \label{eq:boundary_example_dual_hive}
    (\alpha, \beta, \gamma) = \left( \left( \frac{14}{23}, \frac{7}{23}, \frac{2}{23} \right), 
    \left( \frac{18}{23}, \frac{10}{23}, \frac{3}{23} \right), 
    \left( \frac{19}{23}, \frac{10}{23}, \frac{2}{23} \right) \right) .
\end{equation}
Colors red, blue, black and greeen correspond to values $0,1,3$ and $m$ of 
the color map respectively.

\begin{figure}[H]
    \centering
    \begin{minipage}[b]{0.45\textwidth}
        \centering
        \includegraphics[width=\textwidth]{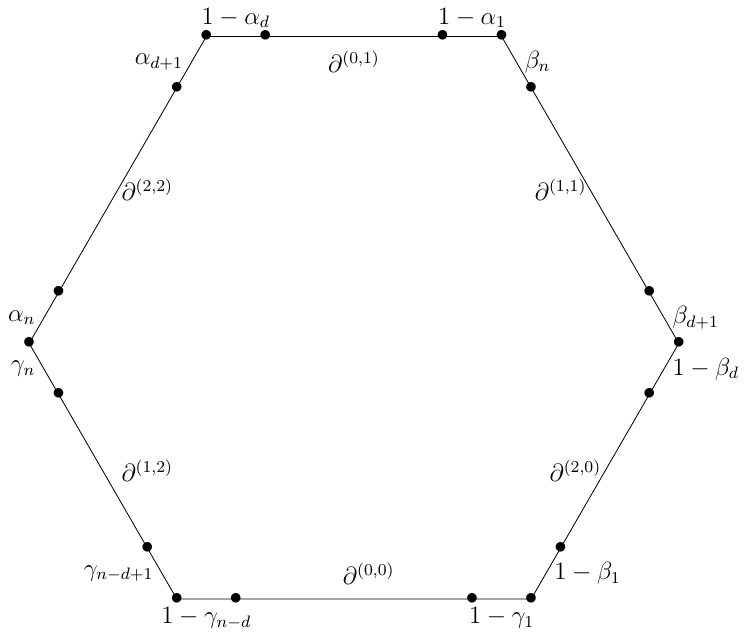}
        \caption{Boundary condition in $\dualhive(\alpha, \beta, \gamma)$.}
        \label{fig:boundary_limit_hive}
    \end{minipage}
    \hfill
    \begin{minipage}[b]{0.45\textwidth}
        \centering
        \includegraphics[width=\textwidth]{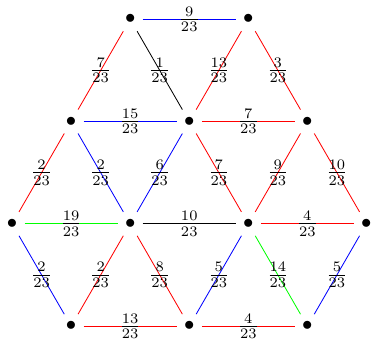}
        \caption{A dual hive with boundary condition 
        \eqref{eq:boundary_example_dual_hive}. }
        \label{fig:ex_limit_dual_hive}
    \end{minipage}
\end{figure}

\noindent
For a given color map $C$, let us denote by 
$\dualhive^C(\alpha, \beta, \gamma)$ the set of dual hives with 
boundary $(\alpha,\beta,\gamma)$ and color map $C$. 
Since an element of $\dualhive^C(\alpha,\beta,\gamma)$ is uniquely defined by its map 
$L:E_{n,d}\rightarrow \mathbb{R}_{\geqslant 0}$, the set $\dualhive^C(\alpha,\beta,\gamma)$ 
can be seen as an affine polytope of $\mathbb{R}^{E_{n,d}}$ written as 
$$\dualhive^C(\alpha,\beta,\gamma)=A^C\cap K_{n,d} \ ,$$ 
where $K_{n,d}$ is the cone of induced by (2)(b)(ii) and $A^C$ is the 
affine subspace induced by the equalities coming from (2)(a), (2)(b)(i) and (2)(c).

\subsection{From dual hive to triangular honeycomb}
\label{subsec:from_dual_hive_to_honey}

\begin{definition}[$\Gamma_{d, n}$ graph]
    Let $n \geqslant d$ be two integers. 
    The \textit{dual graph} $\Gamma_{d, n} = 
    (V^\Gamma, E^\Gamma)$ of $H_{d, n}$ is the following graph :
    \begin{itemize}
    \item there is one vertex $v_f$ for each triangular 
    face $f$ of $H_{d,n}$ and one vertex $v_{\tilde{e}}$ 
    for each outer edge $\tilde{e}$ of $H_{d,n}$ ,
    \item there is an edge $e$ between $v_f$ and $v_{f'}$ 
    (resp. between $v_f$ and $v_{\tilde{e}}$) if the faces $f$ and $f'$ 
    share an edge $\tilde{e}$ in $H_{d,n}$ 
    (resp. if $\tilde{e}$ is a boundary edge of $f$ in $H_{d,n}$).
    \end{itemize}
\end{definition}

\noindent
The map $e\mapsto \tilde{e}$ yields a bijection from $E^{\Gamma}$ 
to $E_{d,n}$ and $e$ is then said \textit{dual} to $\tilde{e}$. 
Hence, any color map $C:E_{d,n}\rightarrow \{0,1,3,m\}$ yields a 
color map, also denoted by $C$, from $E^\Gamma$ to $\{0,1,3,m\}$ 
by setting $C(e)=C(\tilde{e})$. 
Likewise, any edge of $E^{\Gamma}$ inherits the type 
$\ell(e)=\ell(\tilde{e})\in\{0,1,2\}$, the height 
$h(e)=h(\tilde{e})$ and the label 
$L(e)=L(\tilde{e})$ of its dual edge.
\\
\\
To a dual hive $H=(C,L)\in \dualhive(\alpha,\beta,\gamma)$, 
we associate a collection $\mathcal{S}(H)$ of segments 
of $T$:

\begin{enumerate}
\item For each $v\in V^{\Gamma}$ :
\begin{itemize}
\item if $v$ is adjacent to three edges 
$(e^0,e^1,e^2) \in E_\Gamma^3$ with $e^\ell$ of type $\ell$. 
We then set $x_v= \left(L(e^0),L(e^1),L(e^2)\right) \in T$,
\item if $v\in \partial V^{\Gamma}$ and $v$ is adjacent to an 
edge $e$ such that $\tilde{e}$ $\in \ell^{(i,i)}$ 
(resp. in $\ell^{(i+1,i+2)}$), 
we set $x_v=(L(e)\delta_{i, 0}, L(e) \delta_{i, 1}, 
L(e)\delta_{i, 2})$ 
(resp. $x_v=((1-L(e))\delta_{i, 0}, (1-L(e)) \delta_{i, 1}, 
(1-L(e)) \delta_{i, 2})$), 
where $\delta_{i, j} = 1 $ if $i=j$ and $0$ otherwise.
\end{itemize}
\item Then, we set 
$$\mathcal{S}(H) = \left\{[x_v,x_{v'}] \mid e=\{v,v'\} \in E^\Gamma \right\} .$$
\end{enumerate}
For $e=\{v,v'\} \in E^\Gamma$, let us denote its associated segment by 
\begin{equation}
    \label{eq:def_Phi}
    \Phi(e) = [x_v,x_{v'}] \ .
\end{equation}
As shown below, the collection of segments 
$\mathcal{S}(H)$ is almost the edge set of the 
structure graph of a honeycomb.

\begin{lemma}[Edge segments]
    \label{lem:hive_honey_1}
    Suppose that $\tilde{e} \in E_{n, d}$ is an edge of type 
    $\ell$ adjacent to a face $\tilde{v}$ of $H_{n,d}$ 
    and set $\epsilon=+1$ (resp. $\epsilon=-1$) if this 
    face is  a lower (resp. upper.) triangular face.  
    Then, either $c(e) \neq m$ and
    $$\Phi(e)\subset x_v + \epsilon e^{2 \pi i(\ell+1)/3} \mathbb{R}_{>0} \ ,$$
    or $c(e)=m$ and 
    $$\Phi(e)=\{x_v\} \ .$$
\end{lemma}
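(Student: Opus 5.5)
The argument is a direct computation in the barycentric coordinates of $T$, split according to whether the other endpoint $v'$ of $e$ is a face vertex or a boundary vertex of $\Gamma_{d,n}$. In both cases we compute $x_{v'}-x_v$ and check that it is a nonnegative multiple of $\epsilon e^{2\pi i(\ell+1)/3}$, vanishing exactly when $C(\tilde e)=m$.

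\emph{Case 1: $\tilde e$ is interior.} Then $\tilde e$ is shared by $\tilde v$ and a face $\tilde v'$ of the opposite orientation, and the union of the two faces is a lozenge $f$ whose middle edge is $\tilde e$. Write the edges of $\tilde v$ as $\tilde e,\tilde a,\tilde b$, of types $\ell,\ell+1,\ell+2$, and those of $\tilde v'$ as $\tilde e,\tilde a',\tilde b'$, with $\tilde a,\tilde a'$ (resp. $\tilde b,\tilde b'$) opposite edges of $f$ of the same type. By the definition of $x_v$, the type-$\ell$ coordinates of $x_v$ and $x_{v'}$ are both $L(\tilde e)$. Condition (2)(a) gives $L(\tilde a)+L(\tilde b)=L(\tilde a')+L(\tilde b')$, so
\[
x_{v'}-x_v=t\,(u_{\ell+1}-u_{\ell+2}),\qquad t=L(\tilde a')-L(\tilde a)=L(\tilde b)-L(\tilde b'),
\]
where $u_j$ is the direction in which only the $j$-th barycentric coordinate increases. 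The vector $u_{\ell+1}-u_{\ell+2}$ is parallel to $\partial_\ell T$, hence a real multiple of $e^{2\pi i(\ell+1)/3}$ by Table~\ref{tab:edge_types}.
\begin{itemize}
\item If $C(\tilde e)=m$, condition (2)(b)(i) gives $t=0$, so $\Phi(e)=\{x_v\}$.
\item Otherwise, (2)(b)(ii) fixes the sign of $t$ through the comparison of the heights $h(\tilde a),h(\tilde a')$. From \eqref{eq:def_type_height_dual_hive}, the edge with the larger height lies in the upper or lower triangle according to a fixed rule. This gives $t\neq 0$ with sign determined by whether $\tilde v$ is a lower or an upper face, which matches $\epsilon$.
\end{itemize}

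\emph{Case 2: $\tilde e$ is a boundary edge.} Then $v'=v_{\tilde e}$, and $x_{v'}$ is given by the explicit boundary formula, lying on the side $\partial T$ obtained by setting the appropriate coordinate to zero. Boundary edges are never colored $m$, since their colors are prescribed as $0$ or $1$. We check directly that $x_{v'}-x_v$ keeps the type-$\ell$ coordinate $L(\tilde e)$ (or $1-L(\tilde e)$ in the case $\ell^{(i+1,i+2)}$) fixed. It therefore points along $e^{2\pi i(\ell+1)/3}$, toward the correct side of $T$, which gives the sign $\epsilon$; boundary faces adjacent to a given side all have the same orientation. Strict positivity follows from regularity of $\alpha,\beta,\gamma$ together with the strict inequalities of (2)(b)(ii) propagated from the boundary. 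If only weak positivity is available there, this is where a short extra argument will be needed.

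\emph{Main obstacle.} The bookkeeping of the sign in Case~1 is the hard part: one must check, for each of the three types $\ell$ and both face orientations, that the height ordering from (2)(b)(ii) corresponds to $\epsilon e^{2\pi i(\ell+1)/3}$ and not to its opposite. I would do one type explicitly, say $\ell=0$ with a lower face, using the coordinates $(n+d-r-s,r,s)$. The other two types then follow by the $\mathbb Z/3$ rotational symmetry of $H_{d,n}$ and $T$, which permutes types cyclically. The opposite face orientation follows by exchanging the roles of $v$ and $v'$.
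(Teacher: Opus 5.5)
Your argument is essentially the paper's proof, which also reduces to $\ell=0$ and proceeds in the same steps:
\begin{itemize}
\item the type-$\ell$ coordinate is constant on $[x_v,x_{v'}]$, so the segment is parallel to $e^{2\pi i(\ell+1)/3}$;
\item (2)(b)(i) collapses the segment when $C(\tilde e)=m$;
\item (2)(b)(ii) on the lozenge gives a strict sign otherwise;
\item the boundary case is read off from the explicit boundary coordinates.
\end{itemize}
The paper also only asserts strictness at boundary edges, so your hedge there asks for no less than it provides. One correction to your sign bookkeeping: with $h(e)=v_\ell$, the two type-$(\ell+1)$ edges $\tilde a,\tilde a'$ of the lozenge have equal heights. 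For instance, for $\ell=0$ and the horizontal edge from $(r-1)+se^{i\pi/3}$ to $r+se^{i\pi/3}$, both type-$1$ edges have height $r-1$, while the type-$2$ edges have heights $s$ and $s-1$. So (2)(b)(ii) gives nothing for $\tilde a,\tilde a'$; apply it to the type-$(\ell+2)$ pair $\tilde b,\tilde b'$ instead, as the paper does with the type-$2$ edges. Your identity $t=L(\tilde b)-L(\tilde b')$ then fixes the sign, and it matches $\epsilon$.
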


\begin{proof}
    Suppose without loss of generality that $e$ is of type $0$. 
    Then, $(x_v)_0=(x_{v'})_0=L(e)$, so that $x_0=L(e)$ for any 
    $x\in \Phi(e)=[x_v,x_{v'}]$. 
    We deduce that $\Phi(e)\subset v+\mathbb{R}e^{2\pi/3}$. 
    \\
    If $e$ is not colored $m$ 
    and is of the form $e = \{v_f, v_{f'}\}$, 
    consider the lozenge 
    of $H_{n,d}$  consisting of faces $f$ and $f'$ 
    whose middle edge is $\tilde{e}$. 
    Denote by $\tilde{f},\tilde{f}'$ the two edges of type $2$ of 
    this lozenge, with the convention that $h(\tilde{f}')>h(\tilde{f})$ 
    and $\tilde{f}$ (resp. $\tilde{f}'$) is a boundary edge of the 
    face dual of $v$ (resp. $v'$). 
    Then, by Condition (2)(ii) of Definition \ref{def:limit_dual_hive}, 
    $L(\tilde{f}')>L(\tilde{f})$ and thus $(x_{v'})_{2}>(x_v)_2$. 
    If $e$ is of the form $e = \{v_f, v_{\tilde{e}}\}$, 
    we have that $x(v_{\tilde{e}})_2 = 0$ if $\tilde{e} \in \partial^{(0, 0)}$ 
    or $x(v_{\tilde{e}})_2 = 1 - L(e)$ if $\tilde{e} \in \partial^{(0, 1)}$. 
    The two previous cases correspond to $\epsilon = -1$ and 
     $\epsilon = 1$ respectively. In both cases $\epsilon \cdot (x_{v_{\tilde{e}}})_{2} 
     > (x_v)_2$. We deduce that $e\subset x_v+\mathbb{R}_{>0}e^{2\pi/3}$. 
    A consequence of this fact is that the angle between two consecutive 
    edges adjacent to an edge $v$ is $2\pi/3$. 
    If $e$ is colored $m$ Condition (2)(b)(i) of Definition 
    \ref{def:limit_dual_hive} implies that $x_{v}=x_{v'}$ 
    and thus $\Phi(e)=\{x_v\}$.
\end{proof}

\begin{lemma}[Distinct edges give disjoint segments]
    \label{lem:hive_honey_non_crossing}
    If $e,e'\in E^\Gamma$ are distinct, then 
    $$\interior(\Phi(e))\cap \interior(\Phi(e'))=\emptyset \ .$$
\end{lemma}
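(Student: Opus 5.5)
We argue by contradiction, in the spirit of the classical argument that a generic hive gives a non-self-intersecting honeycomb. We combine a local analysis around each dual vertex with a global injectivity statement for the map $v \mapsto x_v$ on non-$m$ components.

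\textbf{Step 1 (a coordinate separates parallel segments).} By Lemma \ref{lem:hive_honey_1}, for $e$ not coloured $m$, every point of $\Phi(e)$ has $\ell(e)$-coordinate equal to $L(e)$. So two segments can have intersecting interiors in two ways only: they are parallel (same type) and lie on the same line, or they are of different types and cross transversally.

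\textbf{Step 2 (parallel segments).} We would show that edges of the same type $\ell$ and equal label $L$ produce interior-disjoint segments. The edges of $H_{n,d}$ of type $\ell$ are organised into rows of fixed height $h$. Inside a row, consecutive dual edges are separated by lozenges. Condition (2)(b)(ii) of Definition \ref{def:limit_dual_hive} makes $L$ strictly monotone in $h$ across lozenges whose middle edge is not coloured $m$. Hence equal labels can occur only along chains of lozenges with $m$-coloured middle edges. Along such chains the segments are glued end to end, since the $m$-edges collapse to points by Lemma \ref{lem:hive_honey_1}, so they meet at most at endpoints. The relevant coordinate is the transverse one: following the row, the coordinate $(x_v)_{\ell+1}$ is monotone, as in the proof of Lemma \ref{lem:hive_honey_1}. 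Consecutive segments therefore occupy consecutive, interior-disjoint intervals of the line.

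\textbf{Step 3 (transversal crossings).} This is the main obstacle, and we would handle it with a winding or degree argument. Consider the piecewise linear map $F$ from the dual cell complex of $H_{n,d}$ (a triangulated disk) to $T$. It sends each dual vertex $v$ to $x_v$ and each edge $e$ to $\Phi(e)$, extended linearly on the dual faces. By Lemma \ref{lem:hive_honey_1}, the three segments at each trivalent vertex leave in directions at mutual angles $2\pi/3$. The orientation $\epsilon$ is dictated by whether the face is upper or lower, so around each interior vertex of $H_{n,d}$ the image polygon turns exactly once, positively. On the boundary, condition (2)(c) sends the boundary edges monotonically onto $\partial T$ with degree one. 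Local orientation preservation together with degree one on the boundary then forces $F$ to be injective on the interiors of the dual $2$-cells. Two crossing segments would lie in the boundary of adjacent image cells that overlap, which contradicts injectivity.

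\textbf{Expected difficulty.} The delicate point is the colours $3$ and $m$. Faces of colour type $(0,1,m)$ degenerate, so the cells around them collapse. We expect to first contract all $m$-edges, using Lemma \ref{lem:hive_honey_1} and Step 2, and then run the degree argument on the contracted graph. There every vertex is genuinely trivalent with $2\pi/3$ angles, and the colour-$3$ edges behave geometrically like the other edges.
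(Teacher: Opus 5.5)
\textbf{There is a genuine gap.} Your Step~2 rests on a false claim, so same-type collinear segments are never actually handled.

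\textbf{The false claim in Step~2.} Condition (2)(b)(ii) only compares type-$\ell$ edges that are joined by a chain of lozenges. Nothing forces two unrelated type-$\ell$ edges to have different labels. So ``equal labels can occur only along chains of lozenges with $m$-coloured middle edges'' does not follow, and in fact it fails.

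Take $d=0$. There are then no $m$-edges at all, so your claim would make all type-$0$ labels pairwise distinct. Now take $n=3$ and a honeycomb whose central hexagon is regular; a small regular hexagon near the centre of $T$ with legs attached generically is realisable. The two hexagon vertices that do not lie on the type-$0$ sides lie on the midline $x_0=\mathrm{const}$. Hence the type-$0$ legs leaving them are two distinct dual edges with the same label: one runs straight to $\partial_2T$, the other to a trivalent vertex. Their segments are collinear, and they are disjoint only because the hexagon sits between them. Step~2 says nothing about such pairs, and Step~3 is formulated only for transversal crossings, so this case is not covered.

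\textbf{How the paper handles it.} The paper first proves a positional monotonicity of labels. For $e,e'$ of the same type $\ell$, $L(e)>L(e')$ whenever $e_{\ell+1}\leqslant e'_{\ell+1}$ and $e_\ell>e'_\ell$. There is also a non-strict version, with equality only through $m$-edges. Then, for two dual edges with no common endpoint, it uses this to exhibit one barycentric coordinate separating $\Phi(e)$ from $\Phi(e')$, except possibly at a single endpoint. This treats collinear and crossing pairs uniformly, with no topology.

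\textbf{Step~3 as a possible repair, and why it is not yet a proof.} A genuine global-homeomorphism statement would also exclude overlapping collinear segments: the image cells on either side of an overlap would have to coincide, which is impossible for a convex cell. As written, however, Step~3 only asserts the needed facts, and one assertion is wrong.

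Contracting an $m$-edge does not leave trivalent vertices with $2\pi/3$ angles. It merges two trivalent vertices into a $4$-valent one, where a colour-$0$ and a colour-$1$ line cross at angle $\pi/3$. Local injectivity there must be checked directly. The four cells around that vertex are dual to the two endpoints and the two apices of the lozenge. They must have angles $\pi/3,2\pi/3,\pi/3,2\pi/3$ in that cyclic order. If you instead contract whole maximal chains, as in $G^C$, the drawing is no longer an embedded cell complex, and the degree argument has nothing to act on.

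You would also have to verify the following:
\begin{itemize}
\item around every vertex of $H_{n,d}$, the directions from Lemma \ref{lem:hive_honey_1} give a positively oriented convex polygon;
\item the boundary cells, including the three corner cells, can be closed by arcs of $\partial T$ so that the boundary map is a homeomorphism onto $\partial T$, using the ordering in (2)(c);
\item on hexagonal cells, ``extended linearly'' is replaced by coning to an interior point of the convex image.
\end{itemize}
With all of this done, the degree route is a valid alternative that proves more, namely that the image cells tile $T$. Without it, the proposal does not prove the lemma.
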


\noindent
This lemma is a rephrasing in the continuous case of the statements 
of Lemma \cite[5.9]{françois2024positiveformulaproductconjugacy} 
and Lemma \cite[5.10]{françois2024positiveformulaproductconjugacy}. 
We provide here a proof which is much simpler in its continuous version.

\begin{proof}
For $\tilde{e}\in E_{n,d}$, denote by $\tilde{e}_i=\tilde{v}_i$ (resp. $\tilde{e}^i$), 
where $\tilde{v}$ is the upper-triangular face (resp. lower-triangular) 
which is delimited by $e$.
Let $e,e'$ be of same type $\ell$. First, by iterating Condition (2)(ii) 
of Definition \ref{def:limit_dual_hive}, $L(e)>L(e')$ if $e_{\ell+1}=e'_{\ell+1}$ 
and $e_{\ell}>e'_{\ell}$. Next, using Condition (2)(ii) of 
Definition \ref{def:limit_dual_hive} and the fact that $C$ is a color map, 
$L(e)>L(e')$ if $e_{\ell+1}=e'_{\ell+1}-1$ and $e_{\ell}=e'_{\ell}+1$. 
Therefore, $L(e)>L(e')$ if $e$ and $e'$ are of same type $\ell$ and 
$e_{\ell+1}\leqslant e'_{\ell+1}$, $e_{\ell}>e'_{\ell}$. 
The same reasoning yields that $L(e)\geqslant L(e')$ if $e$ and $e'$ are of same 
type $\ell$ and $e_{\ell+1}\leqslant e'_{\ell+1}$, $e_{\ell}=e'_{\ell}$ 
with equality only if all edges of type $\ell-1$ between $e$ and $e'$ 
are colored $m$.
\\
Next suppose that $e=\{v_1,v_2\}$ and $e'=\{v'_1,v'_2\}$ with $e\not =e'$, 
with $v_1,v'_1$ being dual to an upper-triangular face and $v_2,v'_2$ being 
dual to a lower-triangular face. If $v_i=v'_j$ for some $i,j\in\{1,2\}$, 
then $\interior(\Phi(e))\cap \interior(\Phi(e'))=\emptyset$ by Lemma \ref{lem:hive_honey_1}. 
\\
Otherwise, suppose without loss of generality that $(v_1)_0<(v'_1)_0$. 
Since $\sum_{j=0}^2(v_1)_j= \sum_{j=0}^2(v'_1)_j=1$, we can assume without loss 
of generality that $(v_1)_2>(v'_2)_2$. Then, by the reasoning above, 
the edge $e^2_1$ (resp. $e^2_2$) of type $2$ adjacent to $v_1$ (resp. $v_2$) 
satisfy $L(e^2_1)>L(e^2_2)$.
\\
Set $x^i:=x_{v_i}$ and $y^i=x_{v'_i}$ for $i=1,2$. 
Since $x^1_2=L(e^2_1)$ and $y^1_2=L(e^2_2)$, by the previous reasoning 
$x^1_2>y^1_2$. Doing the same with the lower triangular faces $v_2,v'_2$, 
which must be adjacent respectively to $v_1$ and $v'_1$, yield that 
$x^2_2\geqslant y^2_2$. Hence, the segments $\Phi(e)=[x^1,x^2]$ and 
$\Phi(e')=[y^1,y^2]$ can only meet at $y^2$, and $\interior(\Phi(e))\cap \interior(\Phi(e'))=\emptyset$. 
\end{proof}

\begin{definition}[Maximal chain, reduced graph]
    \label{def:skeleton}
    Let $n\geqslant d$ be integers and let 
    $C:E_{n,d}\rightarrow\{0,1,3,m\}$ be a color map. 
    \begin{itemize}
        \item  A \textit{maximal chain} of $C$ is a path 
        $\gamma = (e_1,\ldots,e_{2r+1}) \in (E_{n, d})^{2r+1}$ for some 
        $r \geqslant 0$ such that 
        $c(e_{2i})=m$, $c(e_{2i+1})=c(e_1)$ for $1 \leqslant i \leqslant r$, 
        and such that two consecutive edges share a vertex.
        We write $\gamma=\{ x, y \}$  for $x,y \in V^\Gamma$ 
        where $x$ (resp. $y$) is dual to a face $f_x$ (resp $f_y$) 
        in $H_{n, d}$ such that $e_1 \in f_x$ (resp. $e_{2r+1} \in f_y$) 
        and where $f_x$ and $f_y$ do not have any $m$ edges
        to emphasize that the path 
        goes from $x$ to $y$. 
    Moreover, the color of $\gamma$ is defined as $c(\gamma)=c(e_1)$. 
    \\
    \item The \textit{reduced graph of C} is the graph $G^C=(V^C,E^C)$ 
    defined by:
    \begin{itemize}
    \item $V^C=\interior(V^C)\cup \partial (V^C)$, where $\interior(V^C) = 
    V^{\Gamma} \setminus \{u\in V_{\Gamma} \mid \exists 
    (u,v)\in E_{\Gamma}, C(\{u,v\})=m\}$ and $\partial V^C=\partial V^{\Gamma}$,
    \item $E^C=\{ \gamma = \{x,y\} \mid \{x,y\}\text{ is a maximal chain of }C\}.$
    \end{itemize}
    The boundary vertices of $G^C$ are ordered as 
    the ones of $\Gamma$.
    \end{itemize}
\end{definition}

\noindent
Remark that the definition of the edge set is valid, 
since any vertex $u\in V^\Gamma$ adjacent to an edge colored $m$ 
cannot be the endpoint of a maximal chain of $C$. 
Moreover, by the color condition, a maximal chain with $c(e_1)=3$ 
is necessarily of length $1$. 
Note that any edge $e = \{x, y\} \in E_{n, d}$ not adjacent to an edge 
colored $m$ is a maximal chain (with $r=0$) and is thus in $E^C$.
\\
\\
The map $C\mapsto C^C$ is injective as we can recover $C$ 
from $G^C$: it suffices to color the successive edges of a 
maximal chain $\gamma$ as $c(e_{2i+1})=c(\gamma)$ and $c(e_{2i})=m$.
In the sequel, we denote by $\partial E^C$ 
the set of edges adjacent to a univalent vertex of $G^C$. 
Following Definition \ref{def:limit_dual_hive} and Definition \ref{def:skeleton}, 
we introduce a partial order $\leqslant$ on $E^C$ by completing the relation 
$e\leqslant e'$
if there exists an edge $\tilde{e}\in E_{d,n}$ 
(resp. $\tilde{e}'\in E_{d,n}$) dual to an edge in the equivalence class of 
$e$ (resp. $e'$) and such that $\tilde{e},\tilde{e}'$ are of same type, 
adjacent to the same lozenge and satisfy $h(\tilde{e}')\geqslant h(\tilde{e})$. 
\\
\\
Let $C$ be a color map and let $H \in \dualhive^C(\alpha,\beta,\gamma)$ 
be a dual hive. For any $\hat{e} \in E^C$, let us set 
$\hat{\Phi}[H](\hat{e})=\bigcup_{e\in \hat{e}}\Phi[H](e)$ and
$$\rho_C(H)=\left(\{\hat{\Phi}[H](\hat{e}) 
,\hat{e}\in E^C\},c\right)=\bigcup_{e\in E}\Phi[H](e) \ ,$$
where $c:\{\hat{\Phi}[H](\hat{e}) 
,\hat{e}\in E^C\}\rightarrow \{0,1,3\}$ with $c(\hat{\Phi}[H](\hat{e}))$ being the unique color different from $m$ in the maximal chain $\hat{e}$. Remark that $\bigcup_{\hat{e}\in E^C}\hat{\Phi}[H](\hat{e}) =\bigcup_{e\in E}\Phi[H](e)$.
\begin{lemma}[Reduced graph segments]
    The set $ \left\{\hat{\Phi}[H](\hat{e}) \mid \hat{e}\in E^C \right\}$ 
    is a set of segments of $T$.
\end{lemma}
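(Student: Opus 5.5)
We must show that for each maximal chain $\hat e=(e_1,\dots,e_{2r+1})$ the union $\hat\Phi[H](\hat e)=\bigcup_i\Phi[H](e_i)$ is a single segment of $T$. The plan is to walk along the chain and use Lemma \ref{lem:hive_honey_1} at each step: edges coloured $m$ collapse to points, while the odd edges are all parallel and point the same way, so their images line up end to end.

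\textbf{Step 1: the even edges contribute nothing.} Write $e_j=\{u_{j-1},u_j\}$ in $\Gamma_{d,n}$, with $u_0=x$ and $u_{2r+1}=y$. For even $j$ we have $C(e_j)=m$, so Lemma \ref{lem:hive_honey_1} gives $\Phi(e_j)=\{x_{u_{j-1}}\}$, hence $x_{u_{2i-1}}=x_{u_{2i}}$ for $1\le i\le r$. Consequently
$$\hat\Phi[H](\hat e)=\bigcup_{i=0}^{r}[x_{u_{2i}},x_{u_{2i+1}}],$$
and consecutive odd segments share an endpoint, namely $x_{u_{2i+1}}=x_{u_{2i+2}}$.

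\textbf{Step 2: the odd edges are collinear and point the same way.} Each $\Phi(e_{2i+1})$ is nontrivial, since $C(e_{2i+1})=c(\gamma)\neq m$ and Lemma \ref{lem:hive_honey_1} gives a half-open ray. All of them lie on a common line $\{z:z_\ell=\text{const}\}$, because consecutive segments share endpoints and, I claim, all odd edges have the same type $\ell$.

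To justify this, fix a vertex $u_{2i+1}$ where an odd edge $e_{2i+1}$ meets the $m$-edge $e_{2i+2}$, and look at the triangular face dual to $u_{2i+1}$. Its colours are, up to rotation, $(0,1,m)$. The next face, across $e_{2i+2}$, also contains $m$, and the two faces together form a lozenge whose middle edge is $e_{2i+2}$. By condition (2)(b)(i), the two opposite edges of that lozenge of the same type carry equal labels. The chain continues through exactly such a same-type opposite edge, coloured $c(\gamma)$. So $e_{2i+3}$ has the same type as $e_{2i+1}$, and the two faces along the chain alternate between lower and upper.

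\textbf{Step 3: concatenation gives a segment.} Lemma \ref{lem:hive_honey_1} orients $\Phi(e_{2i+1})$ from $x_{u_{2i}}$ in direction $\epsilon\, e^{2\pi i(\ell+1)/3}$, where $\epsilon=\pm1$ according to whether the face dual to $u_{2i}$ is lower or upper. Step 2 shows that the start face $u_{2i}$ of each odd edge always has the same orientation class as $u_0$. Hence every piece points in the same direction $\epsilon_0\, e^{2\pi i(\ell+1)/3}$, and the union of consecutive, collinear, equally oriented segments is the single segment $[x_{u_0},x_{u_{2r+1}}]$. When $r=0$ the statement is just that $\Phi(e_1)$ is a segment.

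The main obstacle is the combinatorial bookkeeping in Step 2. I have to check carefully, using the allowed colour triples $(0,0,0),(1,1,1),(1,0,3),(0,1,m)$ and the alternation of upper and lower faces, that the direction sign $\epsilon$ is preserved along the chain rather than flipped. If it flipped, the union would fold back and still be a segment, but it would not equal $[x,y]$. If the sign turns out to be hard to control, the fallback is to use the monotonicity from condition (2)(b)(ii): the $(\ell+1)$-coordinate of the images should increase strictly along the chain.
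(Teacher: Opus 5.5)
Your proposal is correct and follows the paper's argument: the $m$-edges collapse to points by Lemma~\ref{lem:hive_honey_1}, and the two faces across each $m$-edge have opposite orientations, so the neighbouring non-$m$ pieces leave their common point in opposite directions along the same line and concatenate into $[x_v,x_{v'}]$ (this alternation already settles your worry about the sign flipping). Two small refinements: the fact that $e_{2i+3}$ has the same type as $e_{2i+1}$ comes from the clockwise $(0,1,m)$ colour condition on both faces of the lozenge (each face has a unique edge of colour $c(\gamma)$, and these two edges are opposite sides), not from the label condition (2)(b)(i); and since $u_0$ may be a boundary vertex of $\Gamma_{d,n}$ with no dual face, it is cleaner to argue locally at each $m$-junction, where both vertices are face vertices, as the paper does.
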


\begin{proof}
    Let $e=\{v_1,v_2\}$ and $e'=\{v_1',v_2'\}$ be edges of $E^\Gamma$ such that 
    $\{v_2,v_1'\}\in E^\Gamma$ and $c(\{v_2,v_1'\})=m$. 
    Suppose without loss of generality that $v_2$ (resp. $v_1'$) 
    is dual to an upper (resp. lower) triangular face of $H_{d,n}$. 
    Let $\ell \in \{0, 1, 2\}$ be the type of edges $e$ and $e'$.
    Then, by Lemma \ref{lem:hive_honey_1}, $x_{v_2}=x_{v_1'} \coloneqq x_v$, 
    $\Phi(e)\subset x_{v_2}-\mathbb{R}e^{2(\ell+1)\pi/3}$ 
    (resp. $\Phi(e')\subset x_{v_2}+\mathbb{R}e^{2(\ell+1)\pi/3}$) 
    and $x_v\in \Phi(e)\cap \Phi(e')$, so that $\Phi(e)\cup \Phi(e')$ is 
    a segment corresponding to $[x_{v_1},x_{v_2'}]$. 
    Hence, if $\hat{e}=\{v,v'\}$ is a maximal chain of $C$, 
    $\bigcup_{e\in \hat{e}}\Phi(e)$ is the segment $[x_{v},x_{v'}]$.
\end{proof}

\noindent
For $G\in\mathcal{G}_d$, recall that 
$\phi_{\alpha,\beta,\gamma} \in \Omega^0(V)$ has been 
defined before Proposition \ref{prop:parametrization_triangular_honey_first}
in Section \ref{subsec:parametrization_honeycombs} and 
that the map $\mathcal{L}: \honey_{n,d}^{G} \to \Omega^1(G)$ 
has been defined in Definition \ref{def:flow_honeycomb}.

\begin{proposition}[Dual hives as honeycombs]
    \label{lem:hive_honey_final}
    Let $C:E_{n,d}\rightarrow\{0,1,3\}$ be a color map. 
    The map $\rho_C$ is a injection from 
    $\dualhive^C(\alpha,\beta,\gamma)$ to 
    $\honey_{n,d}^{G^C}(\alpha,\beta,\gamma)$ such that the map 
    $ \mathcal{L} \circ \rho_C: \dualhive^C(\alpha,\beta,\gamma) 
    \rightarrow \Omega^1(G^C)$ 
    is the restriction of an affine map with integer coefficients from 
    $\mathbb{R}^{E_{n,d}}$ to $\mathcal{F}_{G^C}(\phi_{\alpha,\beta,\gamma})$.
\end{proposition}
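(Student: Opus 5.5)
We verify in turn that $\rho_C(H)$ is a triangular honeycomb, that its structure graph is $G^C$ with the correct ordered boundary, and that $\mathcal{L}\circ\rho_C$ is affine with integer coefficients.

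\textbf{Step 1: $\rho_C(H)$ is a honeycomb.} By the preceding lemma, each $\hat{\Phi}[H](\hat e)$, $\hat e\in E^C$, is the segment $[x_v,x_{v'}]$ whose endpoints are the ends of the maximal chain $\hat e$.
\begin{itemize}
\item Non-triviality of these segments follows from Lemma \ref{lem:hive_honey_1}. The first link $e_1$ of a chain is not coloured $m$, so $\Phi(e_1)$ lies in an open ray $x_v+\epsilon e^{2\pi i(\ell+1)/3}\mathbb{R}_{>0}$. The subsequent links either collapse to a point (colour $m$) or continue along the same ray.
\item Condition (1) of Definition \ref{def:toric_honeycomb} follows from Lemma \ref{lem:hive_honey_non_crossing}: interiors of segments coming from distinct edges are disjoint. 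Hence no two segments of $\rho_C(H)$ cross, and the exceptional transverse case $(0,1)$ at angle $\pi/3$ never arises.
\item Condition (2) (angles $2\pi/3$ and admissible colour pairs at shared endpoints) is checked at each vertex $v_f$ with $f$ a face without $m$ edge. Around such a face the clockwise colours are $(0,0,0)$, $(1,1,1)$ or $(1,0,3)$, and Lemma \ref{lem:hive_honey_1} gives the three directions $\epsilon e^{2\pi i(\ell+1)/3}$, $\ell=0,1,2$. This matches the allowed trivalent patterns after reversing orientation, since clockwise in $H_{d,n}$ corresponds to counterclockwise in the dual.
\item Faces carrying an $m$ edge are not vertices of $G^C$; they lie in the interior of a chain segment. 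So no bivalent vertices occur.
\end{itemize}

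\textbf{Step 2: structure graph and boundary.} The vertices of $\rho_C(H)$ are the points $x_v$ for $v\in V^C$, and its edges are exactly the maximal chains. Hence $G(\rho_C(H))\cong G^C$, provided the points $x_v$ for distinct $v\in V^C$ are distinct. This again follows from the strict inequalities of Definition \ref{def:limit_dual_hive}(2)(b)(ii), as in the proof of Lemma \ref{lem:hive_honey_non_crossing}.

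For the boundary vertices, the formulas defining $x_v$ for $v\in\partial V^\Gamma$ show the following:
\begin{itemize}
\item The $n$ univalent vertices on $\partial_i T$ have $(i+1)$-coordinates given by the entries of $\alpha,\beta,\gamma$ as listed in Definition \ref{def:limit_dual_hive}(2)(c).
\item The colour-$1$ boundary edges are those in $\ell^{(i+1,i+2)}$, and there are $d$ of them on each side.
\item Their types match Definition \ref{def:triangular_honey}(3).
\item Colours increase along each side because colour-$1$ entries have coordinate $1-\alpha_j$, which exceeds the colour-$0$ entries $\alpha_{j'}$ by regularity and the ordering.
\end{itemize}
Thus $\rho_C(H)\in\honey^{G^C}_{n,d}(\alpha,\beta,\gamma)$.

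\textbf{Step 3: injectivity and the affine structure.} Injectivity is immediate, since $L$ can be read back from $\rho_C(H)$: for every edge $e\in E^\Gamma$, $L(e)$ is a coordinate of $x_v$ at an endpoint $v$ of $e$.

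For the affine claim, take $\hat e\in E^C$ with first link $e_1$. By the definition of the height, $\mathcal{L}[\rho_C(H)](v,v')=s(v)L(\hat e)$, and $L(\hat e)=L(\tilde e_1)$. Here $s(v)$ depends only on $C$, by Lemma \ref{lem:types_colors_determined_by_boundaries}. So $\mathcal{L}\circ\rho_C$ is the restriction of the linear map $L\mapsto (s(v)L(\tilde e_1))_{(v,v')}$, whose coefficients lie in $\{0,\pm1\}$. That its image lies in $\mathcal{F}_{G^C}(\phi_{\alpha,\beta,\gamma})$ follows from Proposition \ref{prop:parametrization_triangular_honey_first}.

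\textbf{Main obstacle.} The delicate point is Step 1's local analysis at vertices, together with the distinctness of the points $x_v$. Specifically, one must verify that orientations and colours around each non-$m$ face translate exactly into the clockwise patterns $(0,0,0)$, $(1,1,1)$, $(0,3,1)$, and that chains of $m$ edges do not fold back. I would handle this by a case check over the three admissible face colourings, for upper and lower faces separately, using Lemma \ref{lem:hive_honey_1}.
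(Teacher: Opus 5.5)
\textbf{The gap is in your verification of Condition (1) of Definition \ref{def:toric_honeycomb} in Step 1, which rests on a false claim.} Lemma \ref{lem:hive_honey_non_crossing} only says that the interiors of the edge-segments $\Phi(e)$, $e\in E^\Gamma$, are pairwise disjoint. The segments of $\rho_C(H)$, however, are the chain-segments $\hat\Phi[H](\hat e)=\bigcup_{e\in\hat e}\Phi(e)$. The interior of such a union also contains the intermediate points $x_v$, the common endpoints of consecutive $\Phi(e)$, and the lemma says nothing about these points.

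These intermediate points are exactly where segments of $\rho_C(H)$ cross. Suppose $\{v,v'\}\in E^\Gamma$ is coloured $m$. The colour-map rule forces both faces dual to $v$ and $v'$ to be of the form $(0,1,m)$. So this $m$-edge lies in two maximal chains, one of colour $0$ and one of colour $1$, whose non-$m$ links have different types. By Lemma \ref{lem:hive_honey_1}, $x_v=x_{v'}$, and this point is interior to both chain-segments.

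Hence, as soon as $C$ takes the value $m$ (which it must when $0<d<n$, cf.\ Remark \ref{rmk:graph_vertices_edges}), the exceptional case of Definition \ref{def:toric_honeycomb}(1) does occur: there is a transverse $(0,1)$ crossing at the point attached to each $m$-edge. This is precisely what the $m$-edges encode, and it is what the converse construction of Section \ref{subsec:from_honey_to_dual_hive} undoes by splitting degree-$4$ points. Your remark that a face carrying an $m$ edge lies in the interior of a chain-segment should read: in the interior of two.

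What you need instead is the argument the paper gives. Suppose $x$ lies in the interiors of two distinct chain-segments $\hat\Phi[H](\hat e)$ and $\hat\Phi[H](\hat e')$.
\begin{enumerate}
\item By Lemma \ref{lem:hive_honey_non_crossing}, $x=x_v$ for a vertex $v$ incident to non-$m$ links $e\in\hat e$ and $e'\in\hat e'$.
\item Since $x$ is interior to $\hat\Phi[H](\hat e)$, $v$ is also incident to an $m$-edge.
\item The colour-map condition then gives $\{c(\hat e),c(\hat e')\}=\{0,1\}$.
\item Lemma \ref{lem:hive_honey_1}, together with the clockwise order $(0,1,m)$, gives the crossing angle $\pi/3$ with the orientation required by Definition \ref{def:toric_honeycomb}(1), measured from the colour-$0$ segment to the colour-$1$ segment.
\end{enumerate}

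There is also a smaller slip in Step 2. On $\partial_iT$, the label $1-\alpha_j$ of a colour-$1$ boundary edge is the $(i+2)$-coordinate of its boundary point. Its boundary coordinate, the $(i+1)$-coordinate, is therefore $1-(1-\alpha_j)=\alpha_j$. Colour monotonicity is then just $\alpha_j>\alpha_{j'}$ for $j\le d<j'$. The inequality $1-\alpha_j>\alpha_{j'}$ that you invoke is false in general.

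The remaining steps (non-triviality, the endpoint condition, the integral affine formula for $\mathcal{L}\circ\rho_C$, and injectivity) are essentially as in the paper.
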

\begin{proof}
Let us first prove that for $H \in \dualhive^C(\alpha,\beta,\gamma)$, 
$\rho_{C}(H)$ is a triangular honeycomb. 
We first check that the two conditions of Definition \ref{def:toric_honeycomb} 
are fullfilled.
\begin{enumerate}
\item Suppose that $\hat{e}\not=\hat{e}'$ and 
$\interior(\hat{\Phi}(\hat{e}))\cap \interior(\hat{\Phi}(\hat{e}'))\not=\emptyset$. 
Let $x\in \interior(\hat{\Phi}(\hat{e}))\cap \interior(\hat{\Phi}(\hat{e}'))$. 
Since $\hat{\Phi}(\hat{e})=\bigcup_{e\in\hat{e}}\Phi(e)$, 
$\hat{\Phi}(\hat{e}')=\bigcup_{e\in\hat{e}'}\Phi(e)$ and, 
by Lemma \ref{lem:hive_honey_non_crossing}, 
$\interior(\Phi(e))\cap \interior(\Phi(e'))=\emptyset$ for $e\not=e'$, we have that $x=x_v$ 
for some $v\in  e\cap e'$ with $e\in \hat{e}$, $e'\in \hat{e}'$ not colored $m$. 
By Lemma \ref{lem:hive_honey_1} and up to switching $e$ and $e'$, 
the angle from $\Phi(e)$ to $\Phi(e')$ is $2\pi/3$. 
Since $x\in \interior(\hat{\Phi}(\hat{e}))$, $v$ is adjacent to a third edge colored 
$m$; since $C$ is a color map, $c(e)=1$ and $c(e')=0$.  
\item Suppose that $x\in \partial \hat{\Phi}(\hat{e}))\cap \partial\hat{\Phi}(\hat{e}')$. 
Then, there exists $e\in\hat{e}$, $e'\in\hat{e}'$, neither of them colored $m$, 
such that $x\in \partial \Phi(e)\cap \partial\Phi(e')$. 
Then, Lemma \ref{lem:hive_honey_1} and the fact that $C$ is a color map yields 
the second condition.
\end{enumerate}

\noindent
Hence, $\rho_C(H)$ is a honeycomb and the structure graph is given by 
$$G[\rho_C(H)] = \left( \left\{x_v,v\in V^C \right\}, 
\left\{ \partial\hat{\Phi}[H](\hat{e}),\hat{e}\in G^C \right\}\right),$$
so that $G[\rho_C(H)]$ is isomorphic to $G^C$ as colored graph with ordered boundary.  
We next turn to the conditions of being a triangular honeycomb.
\begin{enumerate}
\item Let $x$ be a vertex of $G[\rho_{C}(H)]$. 
Then, $x$ is the endpoint of a segment $\hat{\Phi}(\hat{e})=\bigcup_{e\in\hat{e}}\Phi(e)$. 
Hence, $x=x_{v}$ for some $v\in V^{\Gamma}$ which is either dual to a triangular face 
$\tilde{v}$ without edge $m$ on its boundary (for otherwise $x_{v}\in \interior(\hat{\Phi}(\hat{e}))$), 
or is equal to $v_{\tilde{e}}$ for some $\tilde{e}\in E_{d,n}$.  
In the first case, $x$ is trivalent and, by Lemma \ref{lem:hive_honey_1}, 
there are three non-trivial segments in $T$ adjacent to $x$, 
with the angle between two successive segments being equal to $2\pi/3$ : 
this implies that $x\in T\setminus \partial T$. 
In the second case, $x$ is univalent and belongs to $\partial T$ by construction.
\item Condition (2) is a direct consequence of Lemma \ref{lem:hive_honey_1}.
\item  Let $x$ be the $i$-th boundary point of $\rho_{C}(H)$ on $\partial_1T$, so that $x_0=0$. 
If $i\leqslant d$, then $x=x_{v_{\tilde{e}}}$ for the edge $\tilde{e}\in \partial^{(2,0)}$ 
such that $L(\tilde{e})=1-\beta_i$ 
and $c(e)=1$. Since $\tilde{e}$ is of type $2$, $x_2=L(\tilde{e})=1-\beta_i$, and thus 
$x_1=1-(1-\beta_i)=\beta_i \ .$
If $d+1\leqslant i\leqslant n$, then $\tilde{e}\in\partial^{(1,1)}$, $L(\tilde{e})=\beta_i$ and $c(e)=0$. 
Moreover, $\tilde{e}$ is of type $1$ and thus $x_1=L(\tilde{e})=\beta_i$. 
The cases of other boundaries are similar.
\end{enumerate}
Therefore, $\rho_{C}(H) \in \honey_{n,d}^{G^C}(\alpha,\beta,\gamma)$.
\\
\\
Let us now check that $ \mathcal{L} \circ p_C: \dualhive^C(\alpha,\beta,\gamma) 
\rightarrow \Omega^1(G^C)$ is the restriction of an affine map with integer coefficients.
Let $\hat{e}=\{v,v'\}\in E^C$ of type $\ell$ with $s(v)=1$ and $s(v')=-1$, 
and suppose without loss of generality that $v_{\ell}<v'_{\ell}$. 
Let $\mathcal{E}(\hat{e})=\{v,w\}$ be the unique edge of the maximal chain 
$\hat{e}$ adjacent to $v$. 
Then, $\mathcal{E}(\hat{e})$ is of type $\ell$ and 
$\mathcal{L}[\rho_c[H]](\hat{\Phi}[H](\hat{e})) = (x_v)_{\ell} = L(\mathcal{E}(\hat{e}))$.
Hence, $\mathcal{L} \circ \rho_{C}$ is the restriction of the linear map from 
$\mathbb{R}^{E_{n,d}}$ to $\Omega^1(G^C)$ 
mapping $(x(e))_{e\in E_{n,d}}$ to $\sum_{\hat{e}\in E^C}x(\mathcal{E}(\hat{e}))\delta_{\vec{e}}$, 
where for $e=\{v,v'\}\in E^C$ with $s(v)=1$ and $s(v')=-1$, $\vec{e}=(v,v')$. 
Remark that this map has integer coefficients in the canonical bases of both vector spaces.
\\
\\
Finally, since $h\in \honey_{n,d}^G(\alpha,\beta,\gamma)$ is uniquely determined 
by $(L(e))_{e\in G}$ by Proposition \ref{prop:parametrization_triangular_honey_first}, 
the injectivity of the map $\mathcal{L} \circ \rho_C$ will be implied by the injectivity 
of the map $\rho_C$. 
Suppose that $H_1,H_2 \in \dualhive^C(\alpha,\beta,\gamma)$ are distinct and 
denote by $L_1,L_2$ there respective label maps. 
Then, there exists $e\in E_{n,d}$ such that $L_1(e)\not=L_2(e)$. 
Denote by $\ell$ the type of $e$ and, up to using Condition (2)(a) of Definition 
\ref{def:limit_dual_hive} on a triangular face next to $e$, assume that $c(e)\not=m$. 
Let $\hat{e}=\{v,v'\}$ be the maximal chain containing $e$, with the condition that 
$v_{\ell}<v'_{\ell}$. 
Then, $L_i$ is constant on all edges $e\in \hat{e}$ not colored $m$, so that 
$L_1(\mathcal{E}(\hat{e}))=L_1(e)\not=L_2(e)=L_2(\mathcal{E}(\hat{e}))$. 
Hence, $\mathcal{L} \circ \rho_{C}(H_1)\not = \mathcal{L} \circ \rho_{C}(H_2)$, 
and $\rho_{C}$ is injective. 
\end{proof}

\begin{figure}[ht]
    \centering
    \includegraphics[width=\textwidth]{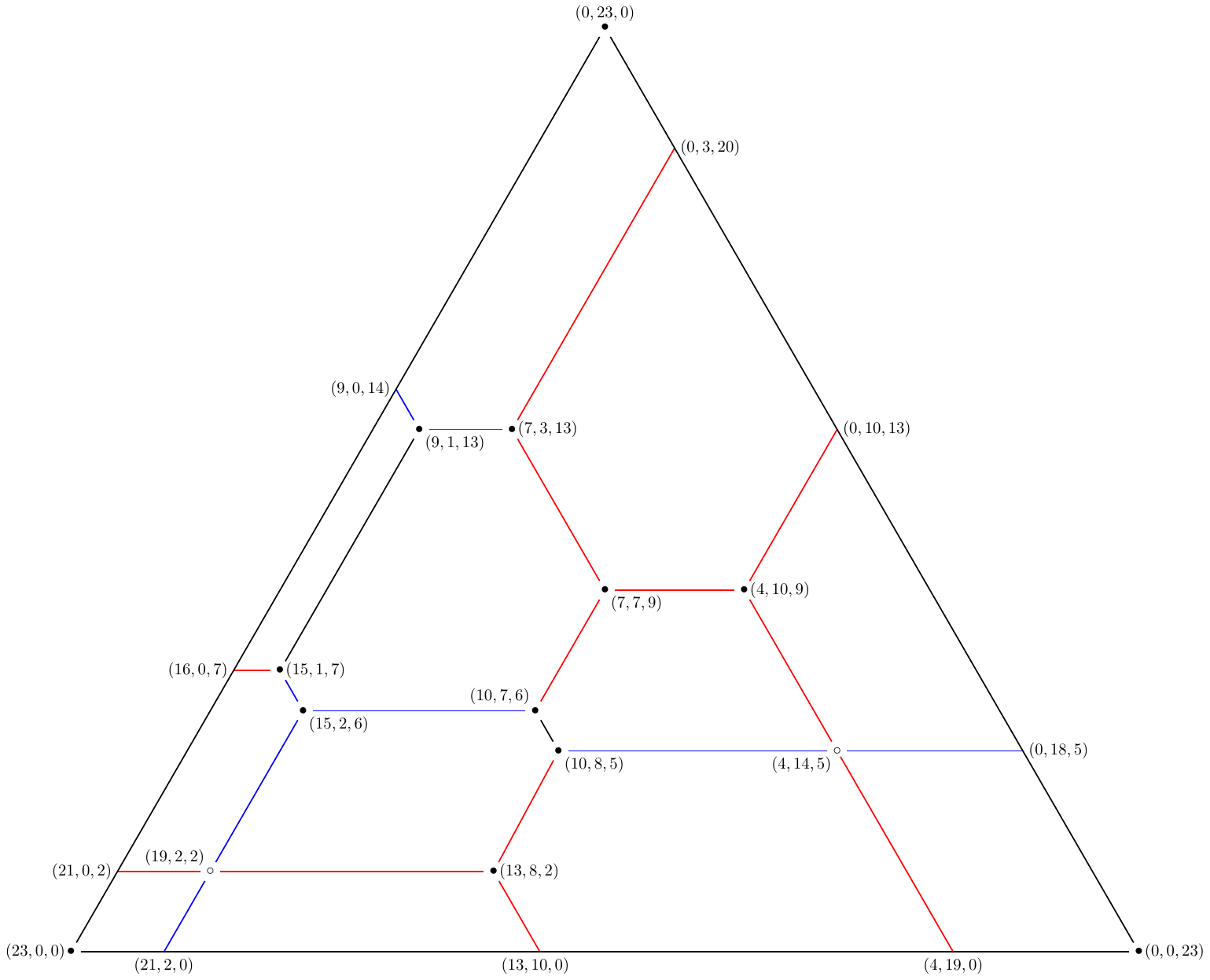}
     \caption{The triangular honeycomb corresponding to the dual hive of 
    Figure \ref{fig:ex_limit_dual_hive}. Coordinates 
    should to be multiplied by $1/23$.}
    \label{fig:ex_triangular_honey}
\end{figure}

\subsection{From triangular honeycomb to dual hive}
\label{subsec:from_honey_to_dual_hive}

Let $h\in \honey_{d,n}(\alpha,\beta,\gamma)$ be a triangular honeycomb with 
graph structure $G = (V^G, E^G) \in \mathcal{G}_d$. 
We construct a graph $\tilde{G} = (\tilde{V}, \tilde{E})$ with color and label 
$(\tilde{c},\tilde{L})$ and a map 
$\tilde{\Phi} : \tilde{E} \rightarrow \mathcal{P}(T)$ as follows : 
\begin{enumerate}
\item first consider an intermediate augmentation 
$\hat{G} = \left( \hat{V}, \hat{E} \right)$ of $G$, 
where $\hat{V}$ consists of vertices $V$ of $G$ together with 
the points which are intersection of two segment of $h$. 
For $x, y\in \hat{V}$, we have an edge $\{x,y\}\in \hat{E}$ if and only if 
$[x,y] \subset e$ for some $e\in h$ and 
$]x,y[ \, \cap \, \hat{V}=\emptyset$. Then, set $\tilde{c}(\{x,y\})=c(e)$ 
and $\tilde{L}(\{x,y\})=L(e) = x_{\ell(e)}$ if $]x,y[\subset e$. 
For $\{x,y\}\in \hat{E}$, one sets $\tilde{\Phi}(\{x,y\})=[x,y] \in \mathcal{P}(T)$.
\\
\item A vertex $v \in \hat{V}$ is of degree either $1$ or $3$ if it comes from a 
vertex of $G$ or of degree $4$ 
if it comes from a non-empty intersection $\iota(e) \cap \iota(e')$ for $e,e'\in E^G$. 
In the latter case, the four edges $\left\{ \left\{v,x_i^\pm\right\}, i=0,1 \right\}$ 
adjacent to $v$ in $\hat{G}$ are such that 
$\{v,x^\pm_i\}$ is colored $i$ and of type $\ell-i$ for some $\ell\in\{0,1,2\}$. 
In particular, the angle $\widehat{x_0^\epsilon vx_1^\epsilon}=2\pi/3$ for $\epsilon\in\{-,+\}$. 
Replace $v$ by two vertices $v^+,v^-$, add an edge $e$ to $\hat{E}$ with color $m$, 
type $\ell+1$ and label $1-L(\{v,x^{\pm}_0\})-L(\{v,x^\pm_1\})$ 
between $v^+$ and $v^-$. Set $\tilde{\Phi}(e) = \{ v \}$. 
Replace each edge $\{v,x_i^\pm\}$ by $\{v^\pm,x_i^\pm\}$, keeping the same label and color. 
Repeat the operation successively for each vertex of degree $4$. 
\begin{figure}[ht]
    \centering
    \includegraphics[scale=1]{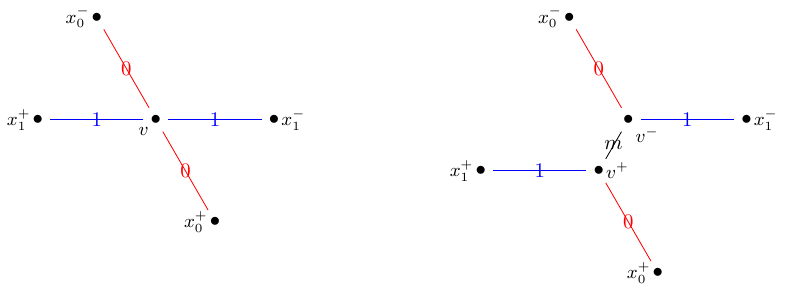}
    \caption{The augmentation of a vertex $v$ of degree $4$ to two vertices $v^\pm$ 
    of degree $3$ linked by an edge of type $\ell+1=1$. 
    The configuration is the one on the bottom right of Figure \ref{fig:ex_triangular_honey}.}
    \label{fig:m_edge}
\end{figure}
\end{enumerate}

\noindent
The resulting augmentation $\tilde{G} = (\tilde{V}, \tilde{E})$ of $G$ has univalent and trivalent vertices, 
each of the latter being adjacent to one edge of each type $\ell\in\{0,1,2\}$. 
Remark moreover that $\bigcup_{e\in \tilde{E}}e=\bigcup_{e\in E}e:=\Lambda$. 
Hence, a connected region of $\mathbb{C}\setminus \Lambda$ is a polygon with 
angle either $2\pi/3$ or $\pi/3$. 
In the latter case, the vertex $v$ of $G$ is a vertex of degree $4$ which has 
been replaced by two vertices of degree $3$ 
and an edge in $\tilde{G}$ as in Figure \ref{fig:m_edge}.
Hence, any bounded region of $\mathbb{C}\setminus \Lambda$ is bounded by $6$ edges of $\tilde{G}$.
\\
Therefore, the dual of $\tilde{G}$ is a graph $\tilde{H}$ with only triangular faces 
and inner vertices of degree $6$. 
In particular, $\tilde{H}$ is isomorphic to a subgraph of the triangular grid. 
Let us define the type (resp. color, resp. label) of an edge $e$ of $\tilde{H}$ 
as the same as the one his dual. 
In particular, each triangular face is bordered by three edges $(e^0,e^1,e^2)$, 
with $e^\ell$ of type $\ell$ and such that 
$L(e^0)+L(e^1)+L(e^2)=1$. Since the types of the boundary edges is 
$$(\underset{n-d}{\underbrace{0,\ldots,0}},\underset{d}{\underbrace{2,\ldots,2}}, 
\underset{n-d}{\underbrace{1,\ldots,1}}, 
\underset{d}{\underbrace{0,\ldots,0}},\underset{n-d}{\underbrace{2,\ldots,2}}, 
\underset{d}{\underbrace{1,\ldots,1}}) \ ,$$
$\tilde{H}$ is actually isomorphic to $H_{d,n}$. For $e\in E_{d,n}$, set 
$C(e) = \tilde{c} (\tilde{e})$ and $L(e)= \tilde{L}(\tilde{e})$

\begin{lemma}[Honeycomb to dual hive]
    \label{lem:honey_hive_final}
The pair $H=(C,L)$ is a dual hive and $\rho_{C}(H)=h$.
\end{lemma}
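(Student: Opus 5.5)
We verify the axioms of Definition \ref{def:limit_dual_hive} one at a time for the pair $(C,L)$ transported to $H_{d,n}$ through the isomorphism $\tilde H\simeq H_{d,n}$. Then we check that running the construction of Section \ref{subsec:from_dual_hive_to_honey} on $H$ gives back the segments of $h$.

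\emph{Color map.} We first show that $C$ is a color map. A triangular face of $\tilde H$ is dual to a trivalent vertex of $\tilde G$. Such a vertex is of one of two kinds.
\begin{itemize}
\item If it is an original trivalent vertex of $G$, its clockwise colors lie in $\{(0,0,0),(1,1,1),(0,3,1)\}$ by the structure-graph remark after Definition \ref{def:toric_honeycomb}. Up to rotation, $(0,3,1)$ is the pattern $(1,0,3)$, once we account for reversing orientation when passing to the dual.
\item If it is one of the new vertices $v^\pm$ coming from a crossing, it carries edges colored $0$ and $1$ together with the new edge colored $m$. Since the crossing angle is $\pi/3$ with $c(e)=0$, $c(e')=1$ (Definition \ref{def:toric_honeycomb}(1)), the clockwise order is $(0,1,m)$.
\end{itemize}
This orientation bookkeeping is routine once a convention is fixed, and we will do it by comparing with Figure \ref{fig:m_edge}.

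\emph{Label map, condition (2)(a).} At an original trivalent vertex, the face relation $L(e^0)+L(e^1)+L(e^2)=1$ is simply $v_0+v_1+v_2=1$ for $v\in T$. At $v^\pm$, it holds by the very definition of the label of the $m$-edge. Here we must check that this label is the same whether computed from the $+$ side or the $-$ side. This follows because on each side the labels are the coordinates $x_\ell$ and $x_{\ell-1}$ of the single point $v$, so the $m$-label equals $v_{\ell+1}$.

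\emph{Conditions (2)(b) and (2)(c).}
\begin{itemize}
\item (2)(b)(i): in a lozenge whose middle edge is $m$, the two same-type edges are the two halves of one segment of $h$ through the crossing, so their labels coincide.
\item (2)(b)(ii): in a lozenge whose middle edge $\tilde e=\{v,v'\}$ is not $m$, the two same-type boundary edges are lines of type $\ell'$ through the endpoints $x_v\neq x_{v'}$ of a non-trivial segment of type $\ell\neq\ell'$. Their heights $(x_v)_{\ell'}$ and $(x_{v'})_{\ell'}$ therefore differ, and the sign of the difference is fixed by the direction $\epsilon e^{2\pi i(\ell+1)/3}$ of Lemma \ref{lem:hive_honey_1}. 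We then have to match this sign with the height ordering $h(\cdot)$ in $H_{d,n}$. This matching is the main obstacle: it requires identifying which side of $\tilde e$ is the upper triangle through the signs $s(v)$, and I would handle it by a local case check over the three types.
\item (2)(c): the boundary values come directly from Definition \ref{def:triangular_honey}(3), using the color-increasing condition to put the $0$-colored edges first. This matches the computation in the proof of Proposition \ref{lem:hive_honey_final}, run in reverse. Nonnegativity of $L$ holds because labels are coordinates of points of $T$.
\end{itemize}

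\emph{Identity $\rho_C(H)=h$.} By construction, $x_v$ for a face-dual vertex $v$ equals its actual position in $T$: the coordinates are the labels of the adjacent edges. At boundary vertices, the explicit formula for $x_v$ reproduces the boundary point. Hence $\Phi(e)=\tilde\Phi(e)$ for every $e\in\tilde E$. The maximal chains of $C$ are exactly the concatenations, through $m$-edges, of the pieces into which crossings cut a segment of $h$. Therefore the reduced graph $G^C$ is $G$, with the same colors and the same boundary order. Consequently $\hat\Phi(\hat e)$ runs over $\mathcal E$, and $\rho_C(H)=h$.
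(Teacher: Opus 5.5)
Your outline follows the paper's proof. You check the axioms of Definition \ref{def:limit_dual_hive} on the transported pair, with the non-degeneracy of the middle segment forcing the strict inequality in (2)(b)(ii), and then identify segments to get $\rho_C(H)=h$. Your edge-by-edge version of that last step, which also gives $G^C\cong G$ with colors and boundary order, is a little more explicit than the paper's appeal to $h$ being determined by $t(h)$.

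One step has to be repaired. In (2)(b)(ii) you fix the sign of the height difference using Lemma \ref{lem:hive_honey_1}, and this is circular. That lemma is a statement about dual hives: the points $x_v$ are built from a dual hive, and its proof derives the direction of $\Phi(e)$ from condition (2)(b)(ii) itself, which is exactly what you are trying to prove. The direction of the middle segment $f$ at its endpoints must come from the honeycomb instead. Use $e_\ell\subset v-s(v)\ed^{2(\ell+1)i\pi/3}\mathbb{R}_{\geqslant 0}$ at trivalent vertices (Section \ref{subsec:parametrization_honeycombs}). Extend it to the split vertices $v^\pm$ of Figure \ref{fig:m_edge}, which receive opposite signs, so that the $m$-edge obeys the same rule.

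What remains to show is that, under the type-preserving identification $\tilde H\cong H_{d,n}$ fixed by the ordered boundary, the vertices with $s=+1$ are dual to upper faces. The paper also leaves this implicit in its sentence ``the coordinate $\ell$ is strictly decreasing''. Faces of $H_{d,n}$ sharing an edge have opposite orientations, and adjacent vertices of $\tilde G$ have opposite signs. So it suffices to check the correspondence at a single boundary face, where $s$ is read off from the color via Definition \ref{def:triangular_honey}(3). After that, your local computation over the three types closes (2)(b)(ii).

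A minor point on the color-map check: no orientation reversal occurs when passing to the dual. The pattern $(1,0,3)$ is already a cyclic rotation of $(0,3,1)$. A reversal would in fact break both that case and the $(0,1,m)$ case you verify at $v^\pm$.
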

\begin{proof}
It is straightforward to check that the maps $C(e)=\tilde{c}(\tilde{e})$, 
$L(e)=\tilde{L}(\tilde{e})$ 
satisfy the properties (1), (2)(a), (2)(b)(i) and (c) of Definition \ref{def:limit_dual_hive}. 
\\
To verify (2)(b)(ii), let $\tilde{e},\tilde{e}'$ be opposite edges of type $\ell$ of a lozenge of 
$H_{d,n}$ with middle edge $\tilde{f}$ of type $\ell+1$ not colored $m$ and denote 
by $e,e',f$ their dual edges in $E^{\tilde{G}}$. 
Suppose that $h(\tilde{e})>h(\tilde{e}')$, where $h$ has been defined in 
\eqref{eq:def_type_height_dual_hive}. 
We want to show that $L(\tilde{e}) = \tilde{L}(e) > L(\tilde{e}') = \tilde{L}(e')$. From the 
definition above, $\tilde{L}(e) = L(e)$ where for $e \in E^G$, $L(e) = x_{\ell}$ 
has been defined in Definition \ref{def:triangular_honey}. We thus need to show that 
$x_{\ell} > x'_{\ell}$. Since $f \in E^{\tilde{G}}$ is of type $\ell + 1$, we have 
that $f \subset x + \R\ed^{2i \pi (\ell+2)/3}$ in $T$. 
The segment $f$ cannot be reduced to a point 
by the definition of a non-degenerate honeycomb and for otherwise condition $(2)$ of 
Definition \ref{def:toric_honeycomb} would not be satisfied. Therefore, the 
coordinate $\ell$ is strictly decreasing between edges $e$ and $e'$ so 
that $x_{\ell} > x'_{\ell}$. Hence, $H\in \dualhive^C(\alpha,\beta,\gamma)$.
\\
Remark that a triangular honeycomb $h=(\mathcal{E},c)$ is uniquely determined by its image 
$t(h)=\bigcup_{e\in \mathcal{E}}e$, 
since then the elements of $\mathcal{E}$ are all the segments of $\mathcal{S}$ whose 
endpoints are univalent or trivalent vertices. 
Recall that for $e = \{v, v'\} \in E^\Gamma$, $\Phi(e) = [x_v, x_{v'}]$.
Hence, to check that $\rho_C(H)=h$, it suffices to 
show that $\bigcup_{e\in E^\Gamma} \Phi(e)=\bigcup_{e\in h} e$. 
This is implied by the construction of $\tilde{\Phi}$ 
at the beginning of the section, since 
$$\bigcup_{e\in E^\Gamma}\Phi(e) = \bigcup_{e\in \tilde{E}}\tilde{\Phi}(e) 
= \bigcup_{ \{v, v'\} \in \tilde{E}} [x_v, x_{v'}]  = \bigcup_{e\in h}e \ .$$
\end{proof}

\noindent
Putting together Lemma \ref{lem:hive_honey_final} with Lemma \ref{lem:honey_hive_final} 
yields the following decomposition.

\begin{proposition}[Color map indexing]
    \label{prop:honey_hive}
    There is a partition 
    $$\honey_{n,d}=\bigsqcup_{C\text{ color map}}\honey_{n,d}^{G^C} \ ,$$
    such that, for each color map $C$, the map $\rho_{C}$ is a bijection and 
    $( \mathcal{L} \circ \rho_{C})^{-1}$ 
    is the restriction of a linear map from $\mathcal{F}_{G^C}(\phi_{\alpha,\beta,\gamma})$ to 
    $\mathbb{R}^{E_{n,d}}$ whose matrix in the canonical bases has integer coefficients.
\end{proposition}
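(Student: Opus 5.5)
The plan is to combine Proposition~\ref{lem:hive_honey_final} (dual hives to honeycombs) with Lemma~\ref{lem:honey_hive_final} (honeycombs to dual hives). Three things need checking: the sets $\honey_{n,d}^{G^C}$ cover $\honey_{n,d}$, they are pairwise disjoint, and each $\rho_C$ is onto with an inverse that has integer coefficients.

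\emph{Covering.} Take $h\in\honey_{n,d}$. The augmentation of Section~\ref{subsec:from_honey_to_dual_hive} produces a dual hive $H=(C,L)$ with $\rho_C(H)=h$, by Lemma~\ref{lem:honey_hive_final}. By Proposition~\ref{lem:hive_honey_final}, $G(h)=G(\rho_C(H))$ is isomorphic to $G^C$ as a coloured graph with ordered boundary, so $h\in\honey_{n,d}^{G^C}$.

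\emph{Disjointness.} Graphs belonging to different isomorphism classes give disjoint sets $\honey^G$. It therefore suffices to show that $G^C\cong G^{C'}$ forces $C=C'$. Here $C\mapsto G^C$ is injective: given $G^C$, one recovers $C$ by recolouring the maximal chains. One must check that this reconstruction depends only on the isomorphism class of $G^{C}$, and not on an embedding in $H_{n,d}$. The intended argument is that, for $h\in\honey^{G}$, the colour map $C$ produced by the augmentation depends only on $h$. By Lemma~\ref{lem:types_colors_determined_by_boundaries}, the types and colours of edges, and hence the positions of $m$-edges (crossings of a $0$-edge with a $1$-edge), are dictated by $G$ together with its boundary. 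Then $C$ is the colour map whose reduced graph is $G$, and the injectivity just noted shows it is unique.

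\emph{Bijectivity and integer inverse.} Injectivity of $\rho_C$ is already part of Proposition~\ref{lem:hive_honey_final}. Surjectivity onto $\honey^{G^C}_{n,d}(\alpha,\beta,\gamma)$ follows from the covering step combined with disjointness: any $h$ with graph $G^C$ comes from a dual hive whose colour map $C'$ satisfies $G^{C'}\cong G^C$, hence $C'=C$.

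For the inverse, describe $(\mathcal L\circ\rho_C)^{-1}$ explicitly, as the augmentation does. Every edge $e\in E_{n,d}$ not coloured $m$ lies in a maximal chain $\hat e$, and we set $L(e)=|\mathcal L[h](\hat e)|$. Because the sign $s$ is determined by $G$, this value equals $\pm\mathcal L[h](\vec{\hat e})$ with a fixed sign. An $m$-edge gets the label $1-L(e^0)-L(e^1)$ from the face relation. This is affine, with constant term $1$, rather than linear. On $\mathcal F_{G^C}(\phi_{\alpha,\beta,\gamma})$, however, the constant $1$ can be written as a combination of flow values: the divergence at an interior vertex is $s(v)=\pm1$. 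The map can therefore be extended to a linear map with integer coefficients, or one may simply accept ``affine'' as what is meant.

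The main obstacle is the disjointness step, that is, showing that the colour map is an invariant of the abstract ordered coloured graph. I would argue it through Lemma~\ref{lem:types_colors_determined_by_boundaries} together with the planar embedding forced by the types.
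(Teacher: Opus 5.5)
Your route is the same as the paper's:
\begin{itemize}
\item covering comes from Lemma~\ref{lem:honey_hive_final};
\item injectivity of $\rho_C$ comes from Proposition~\ref{lem:hive_honey_final};
\item disjointness and surjectivity come from injectivity of $C\mapsto G^C$;
\item the inverse is read off from maximal chains, together with the face relation on $m$-edges.
\end{itemize}
You also note that this inverse is affine. On $\mathcal F_{G^C}(\phi_{\alpha,\beta,\gamma})$ the constant $1$ equals $s(v)\sum_{x\sim v}\lambda(v,x)$ at any trivalent vertex $v$, so the inverse agrees there with a linear map with integer coefficients. This is correct, and it repairs a looseness in the paper's ``linear''.

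The step you flag as the main obstacle is not closed by what you write. The paper justifies injectivity of $C\mapsto G^C$ only by recolouring maximal chains. That recovers $C$ from $G^C$ taken together with its chains inside $\Gamma_{n,d}$. But $\honey^{G}$ is defined through the isomorphism class of the coloured graph with ordered boundary. So both your argument and the paper's need that $G^C\cong G^{C'}$ forces $C=C'$.

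Your justification is that Lemma~\ref{lem:types_colors_determined_by_boundaries} fixes types and colours ``and hence the positions of $m$-edges''. That inference does not follow. The $m$-edges record which $0$-segments cross which $1$-segments, and this is exactly the information the abstract structure graph does not contain. Types, and the local rotation system they force, fix edge directions. Whether two non-adjacent segments cross depends on the lengths, that is, on the point of the polytope.

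What the augmentation does give is a colour map canonically attached to each $h$. Once you check that augmenting $\rho_C(H)$ returns $C$, the sets $\rho_C(\dualhive^C)$ are pairwise disjoint and cover $\honey_{n,d}$. To identify each of them with all of $\honey^{G^C}$, you still need a separate argument that the crossing pattern is an invariant of the isomorphism class of $G(h)$. Alternatively, state the partition using the sets $\rho_C(\dualhive^C)\subseteq\honey^{G^C}$. That is all the proof of Theorem~\ref{th:volume_flat_connection_0_3} needs: volumes add over the $C$ whose $G^C$ lies in a given class, and the normalisation of $\Vol$ depends only on that class.
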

\begin{proof}
    By Lemma \ref{lem:hive_honey_final}, 
    the map $\rho_C:\dualhive^C(\alpha,\beta,\gamma) \rightarrow 
    \honey_{n,d}^{G^C}(\alpha,\beta,\gamma)$ is injective. \\
    Let $h\in\honey_{n,d}^{G^C}(\alpha,\beta,\gamma)$ be a honeycomb. 
    Then, by Lemma \ref{lem:honey_hive_final}, 
    there exists $C'$ a color map and $H\in \dualhive^{C'}(\alpha,\beta,\gamma)$ 
    such that $\rho_{C'}(H)=h$. 
    Hence, $h\in \honey_{n,d}^{G^{C'}}(\alpha,\beta,\gamma)$. 
    Since the map $C\mapsto G^C$ is injective, $C=C'$ and $H\in \dualhive^{C}(\alpha,\beta,\gamma)$. 
    Therefore, $p_{C}$ is a bijection and 
    $\honey_{n,d}^{G^C}(\alpha,\beta,\gamma)\cap \honey_{n,d}^{G^{C'}}(\alpha,\beta,\gamma) 
    =\emptyset$ for $C\not=C'$. 
    Therefore,
    $$\honey_{n,d}=\bigsqcup_{C\text{ color map}}\honey_{n,d}^{G^C} \ .$$
    \noindent
    Finally, for a color map $C$, the map $\rho_C^{-1}$ is then obtained as follows : 
    each segment $e \in h$ of type $\ell$ corresponds to a maximal chain $\hat{e}$ 
    of type $\ell$ of $\Gamma_{n,d}$ 
    with respect to $C$. Hence, for all edge $f\in \hat{e}$ of type $\ell$, one has 
    $\rho_C^{-1}[h](f)=L(e)$. 
    Then, for any edge $f\in E_{n,d}$ colored $m$, one has $\rho_C^{-1}[h](f) = 
    1-\rho_C^{-1}[h](f_1)-\rho_C^{-1}[h](f_2)$, 
    where $f_1$ and $f_2$ are the two other edges of a triangular face bordered by $f$. 
    Hence, the matrix of the map $(\mathcal{L}\circ \rho_{C})^{-1}$ has integer 
    coordinates in the canonical bases.
\end{proof}

\noindent
From the Proposition \ref{prop:honey_hive}, any $G\in\mathcal{G}_d$ 
is of the form $G=G^C$ for some color map $C$. 
Since $V^C\subset V^\Gamma$, any vertex $v$ of $G^C$ inherits the coordinates of 
$\Gamma$ by setting 
$$(v_0,v_1,v_2)=(h(e_0),h(e_1),h(e_2)) \ ,$$ 
where $e_\ell$ is the edge of type $\ell$ adjacent to $v$ in $\Gamma_{d,n}$ 
(even if $e_\ell\not\in E^C$). For $e=\{v,w\},e'=\{v',w'\}\in E^C$ of same type, 
introduce the cover relation $e<e'$ when, up to a transposition, 
$\{w,v'\}$ is an edge of $E^C$ of type $\ell+1$ and $v'_{\ell-1}>w_{\ell-1}$. 
This cover relation translates into a cover relation in $\vec{E}^C$ by saying 
that $(v,w)<(v',w')$ is and only if $\{v,w\}<\{v',w'\}$ in the former sense.

\begin{corollary}[Parametrization of labels]
    \label{cor:paramietrization_label}
    For a color map $C$ and $G^C\in \mathcal{G}_d$,
    $$\mathcal{L} \left(\honey_{n,d}^{G^C}(\alpha,\beta,\gamma) \right) 
    = \mathcal{F}_{G^C}(\phi_{\alpha,\beta,\gamma})\cap K_{G^C} \ ,$$
    where $K_{G^C}\subset \Omega^1(G^C)$ is the cone defined as 
    $$K_{G^C} = \left\{\omega \in \Omega^1(G^C) \mid 
    \vert \omega(e)\vert<\vert \omega(e')\vert\,\text{ if } e<e' \right\}.$$ 
\end{corollary}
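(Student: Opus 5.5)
We argue by double inclusion, transporting everything through the bijection $\rho_C$ of Proposition \ref{prop:honey_hive}. That proposition identifies $\honey_{n,d}^{G^C}(\alpha,\beta,\gamma)$ with $\dualhive^C(\alpha,\beta,\gamma)=A^C\cap K_{n,d}$. Under this identification, $\mathcal{L}\circ\rho_C$ is the restriction of a linear map $\Lambda:\mathbb{R}^{E_{n,d}}\to\Omega^1(G^C)$ which reads the label of a non-$m$ representative $\mathcal{E}(\hat e)$ of each maximal chain. Its inverse on $\mathcal{F}_{G^C}(\phi_{\alpha,\beta,\gamma})$ copies $\omega(\hat e)$ (up to the sign $s$) onto every non-$m$ edge of the chain $\hat e$. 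It then fills each $m$-edge by $1$ minus the two other labels of an adjacent triangular face.

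The claim thus reduces to two facts. First, $\Lambda$ restricted to $A^C$ is a bijection onto $\mathcal{F}_{G^C}(\phi_{\alpha,\beta,\gamma})$. Second, under this bijection the strict inequalities (2)(b)(ii) defining $K_{n,d}$ correspond exactly to the inequalities defining $K_{G^C}$.

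\textbf{Affine part.} The inclusion $\Lambda(A^C)\subset\mathcal{F}_{G^C}(\phi_{\alpha,\beta,\gamma})$ holds by construction. The face relation (2)(a) at a vertex of $\interior(V^C)$ is exactly the divergence equation \eqref{eq:honeycomb_divergence_1}, after accounting for the signs $s(v)$. The boundary values (2)(c) give \eqref{eq:honeycomb_divergence_2}.

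For the reverse inclusion, take $\omega\in\mathcal{F}_{G^C}(\phi_{\alpha,\beta,\gamma})$ and define $L$ by the inverse recipe above. Three points must be checked.
\begin{enumerate}
\item The $m$-edge value is independent of which adjacent face is used. Both faces of a lozenge with $m$ middle edge carry the same pair of labels by construction, so this is consistent and (2)(b)(i) holds.
\item Every face without an $m$ edge is a vertex of $G^C$. There the divergence equation gives $L(e^0)+L(e^1)+L(e^2)=1$.
\item Faces containing an $m$ edge satisfy (2)(a) by the very definition of the $m$-label.
\end{enumerate}
Hence $L\in A^C$ and $\Lambda(L)=\omega$.

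\textbf{Cone part.} The inequalities (2)(b)(ii) concern pairs of same-type edges on a lozenge whose middle edge is not colored $m$. Such a middle edge is itself a non-$m$ edge of a maximal chain, hence lies in $E^C$, and it has type $\ell+1$. The two same-type edges lie in chains $\hat e,\hat e'$ which are joined through this middle edge. This is exactly the cover relation $\hat e<\hat e'$ defined before the corollary, and the height comparison $h(\tilde e')>h(\tilde e)$ corresponds to the condition $v'_{\ell-1}>w_{\ell-1}$.

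Since labels are positive, the inequality $L(\tilde e)<L(\tilde e')$ reads $|\omega(\hat e)|<|\omega(\hat e')|$. For boundary edges of color $1$ one uses the sign convention $s(v)=1$ and $L=\alpha_i-1$, so the absolute value gives $1-\alpha_i$, consistent with (2)(c). Conversely, every cover relation $\hat e<\hat e'$ comes from such a lozenge. Therefore $L\in K_{n,d}$ if and only if $\Lambda(L)\in K_{G^C}$.

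\textbf{Main obstacle.} The delicate step is the bookkeeping in the cone part. One must check that each lozenge inequality in $H_{d,n}$ with non-$m$ middle edge yields exactly one cover relation in $E^C$, with the correct orientation: that ``$h$ larger'' matches the stated coordinate inequality, and that the absolute values absorb the sign $s$ on both sides. Lozenges whose side edges are themselves $m$ edges must also be handled. There I would first try to show that the corresponding inequality is implied by neighbouring ones via (2)(a) and the color-map rules, as in the chain of inequalities in the proof of Lemma \ref{lem:hive_honey_non_crossing}.

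Combining the affine and cone parts gives $\mathcal{L}(\honey_{n,d}^{G^C}(\alpha,\beta,\gamma))=\Lambda(A^C\cap K_{n,d})=\mathcal{F}_{G^C}(\phi_{\alpha,\beta,\gamma})\cap K_{G^C}$.
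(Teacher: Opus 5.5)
Your overall route is the paper's own. The paper also transports everything through the inverse of $\mathcal{L}\circ\rho_C$ (its map $\Psi$) and writes $\dualhive^C(\alpha,\beta,\gamma)=\mathcal{H}^C(\alpha,\beta,\gamma)\cap\mathcal{K}_{<}$. It then disposes of both halves with ``one then checks''. Your affine part is a correct expansion of the first check, but the cone part has a genuine gap.

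\textbf{The lozenge-to-cover-relation dictionary fails.} The identification ``lozenge inequality (2)(b)(ii) $=$ cover relation'' breaks as soon as the middle edge $a$ of the lozenge is an interior piece of a maximal chain with $r\geqslant 1$. In that case one of the two faces of the lozenge has colors $(0,1,m)$, so it is not a vertex of $G^C$. But the cover relation $e<e'$ is defined through $\{w,v'\}\in E^C$: it only compares edges sitting at the two \emph{endpoints} of the middle chain. So the inequality carried by such a lozenge is not a cover relation at all. (It relates the $m$-edge of the $(0,1,m)$ face to its parallel in the other face, or equivalently, by the face sums, the other non-$m$ edge to its parallel.) Conversely, a cover relation through a long middle chain is not a single lozenge inequality. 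Geometrically, these missing inequalities say that every crossing point of a $0$-segment and a $1$-segment lies strictly inside both segments. You only say you would ``try'' to derive them from neighbouring ones, but this is the whole content of the check $\Psi^{-1}(\mathcal{K}_{<})=K_{G^C}$. It is not clear that it holds: a crossing point can slide through an endpoint of one of the two segments without any edge of $G^C$ shrinking to zero length, hence without leaving $K_{G^C}$.

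\textbf{There is also a sign gap in the reverse inclusion} $\mathcal{F}_{G^C}(\phi_{\alpha,\beta,\gamma})\cap K_{G^C}\subset\mathcal{L}(\honey_{n,d}^{G^C}(\alpha,\beta,\gamma))$. There you start from an arbitrary $\omega$ and set $L=\Lambda^{-1}(\omega)$, which on non-$m$ edges is $L(\{v,v'\})=s(v)\,\omega(v,v')$. You then invoke ``since labels are positive'' to convert $|\omega(\hat e)|<|\omega(\hat e')|$ into $L(\tilde e)<L(\tilde e')$. Positivity is only known for labels that already come from honeycombs. Since $K_{G^C}$ constrains absolute values only, nothing yet excludes $s(v)\omega(v,v')<0$ on some interior edge, and then the absolute-value inequality does not yield the signed one. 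You also have not checked that the $m$-labels $1-L(e_1)-L(e_2)$ are nonnegative, which the definition of a dual hive requires. You need one of the following:
\begin{itemize}
\item an argument that $\mathcal{F}_{G^C}(\phi_{\alpha,\beta,\gamma})\cap K_{G^C}$ forces $s(v)\,\omega(v,v')>0$ on every edge;
\item or to work with the signed cone $\{s\,\omega(\hat e)<s\,\omega(\hat e')\}$, which is what your linear $\Lambda$ actually transports.
\end{itemize}
The paper's $\Psi$ builds the absolute values into the map. This hides the issue in the cone part but makes $\Psi$ non-affine, so the same sign question reappears in its claim $\Psi^{-1}(\mathcal{H}^C)=\mathcal{F}_{G^C}(\phi_{\alpha,\beta,\gamma})$. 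Your signed $\Lambda$ makes the affine part clean and isolates the issue, but does not resolve it.
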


\begin{proof}
    Let us define $\Psi: \Omega^1(G) \to \R^{E_{n, d}}$ by 
    $$\Psi[\omega](e)=\left\lbrace\begin{aligned} \vert \omega(\vec{\hat{e}})\vert 
        \quad &\text{ if }e\in \hat{e}, c(e)\not=m\\
    1-\vert \omega(\vec{\hat{e_1}})\vert-\vert \omega(\vec{\hat{e_2}})\vert 
    \quad &\text{ if }c(e)=m,\,e_1\in\hat{e}_1,e_2\in\hat{e}_2 ,\,(e_1,e_2,e) 
    \text{ triangular face of }H_{n,d}\end{aligned}\right..$$
    By Proposition \ref{prop:honey_hive}, we have that  
    $\mathcal{L} \left(\honey_{n,d}^{G^C}(\alpha,\beta,\gamma) \right) 
    = \Psi^{-1} \left(\dualhive^C(\alpha,\beta,\gamma)\right)$.

    \noindent
    Then, remark that $\dualhive^C(\alpha,\beta,\gamma) 
    = \mathcal{H}^C(\alpha,\beta,\gamma) \cap \mathcal{K}_{<},$ 
    where $\mathcal{H}^C(\alpha,\beta,\gamma)\subset \mathbb{R}^{E_{n,d}}$ 
    is the vector subspace determined by the conditions 
    (2)(a), (2)(b)(i) and (2)(c) of Definition \ref{def:limit_dual_hive} 
    and $\mathcal{K}_{<}$ is the cone given by 
    $$\mathcal{K}_{<} = \left\{(H(e))_{e\in E_{n,d}} \mid L(e)< L(e')\text{ if } e ,\,e' 
    \text{ satisfy condition (2)(b)(ii) of Definition \ref{def:limit_dual_hive}} \right\}.$$
    Hence, 
    $$\mathcal{L}\left(\honey_{n,d}^{G^C}(\alpha,\beta,\gamma)\right) = 
    \Psi^{-1} \left(\mathcal{H}^C(\alpha,\beta,\gamma) \cap \mathcal{K}_{\leqslant} \right)
    =\Psi^{-1}\left(\mathcal{H}^C(\alpha,\beta,\gamma)\right) \cap 
    \Psi^{-1}\left(\mathcal{K}_{<}\right).$$
    One then checks that $\Psi^{-1}(\mathcal{H}^C(\alpha,\beta,\gamma)) = 
    \mathcal{F}_{G^C}(\alpha,\beta,\gamma)$ 
    and $\Psi^{-1}(\mathcal{K}_{<})=K_{G^C}$. 
\end{proof}

\begin{remark}[Number of vertices and edges]\label{rmk:graph_vertices_edges}
By Proposition \ref{prop:honey_hive} and the construction of $G^C$ from a coloured map $C$, we deduce that all the graphs $G^C$ have the same number $n^2+3n$ of vertices and the same number $\frac{3n(n+1)}{2}$ of edges : indeed, vertices correspond to either triangles of the dual hive model which are not neighbours to an edge coloured $m$ or to boundary edges, and edges correspond to edges of the dual hive model which are not coloured $m$. Since it has been proven in \cite{françois2024positiveformulaproductconjugacy} that there are always $d(n-d)$ edges coloured $m$ in a dual hive, the result is deduced.
\end{remark}

\subsection{Volume of flat connections}
\label{subsec:main_th_three_holed_sphere}

We can now combine the results of \autocite{françois2024positiveformulaproductconjugacy} 
with the ones of the previous section to prove Theorem \ref{th:Z_g_p_0} in the 
case of the three-holed sphere, that is, for $(g, p) = (0, 3)$.
Let us denote by $\Sigma_0^3$ the sphere with three generic marked points removed. 
The moduli space of flat $\SU(n)$ connections can be described as 
$$M_{0, 3}(\alpha,\beta,\gamma) = 
\{(U_1,U_2,U_3)\in \mathcal{O}_{\alpha} \times 
\mathcal{O}_{\beta}\times \mathcal{O}_{\gamma} \mid U_1U_2U_3=Id_{\SU(n)}\}/\SU(n) \ ,$$
where $\SU(n)$ acts diagonally by conjugation on each factor. 
Its volume has been computed in \autocite{françois2024positiveformulaproductconjugacy} 
using two equivalent 
models named \textit{toric hives} and \textit{dual hives}. 
Using the results of this section, we present a reformulation of this results 
in terms of triangular honeycombs. For $G \in \mathcal{G}_d$, let us set  
\begin{equation}
    \label{eq:def_volume_honey_with_L}
    \Vol\left[\honey_{n,d}^G(\alpha,\beta,\gamma)\right] \coloneqq 
    \Vol \left[ \mathcal{L}( \honey_{n,d}^G(\alpha,\beta,\gamma)) \right] \ ,
\end{equation}
where $\mathcal{L} : \honey_{n,d}^G(\alpha,\beta,\gamma) \to 
\mathcal{F}_G(\phi_{\alpha,\beta,\gamma})$ was defined in Definition 
\ref{def:flow_honeycomb} and $\Vol$ is the volume form defined 
in \eqref{eq:definition_volume_form}.

\begin{theorem}[Volume of flat $\U(n)$-connections on the three-holed sphere]
    \label{th:volume_flat_connection_0_3}
    Let $n\geqslant 3$ and consider the canonical volume form on $\U(n)$. 
    Then, for $\alpha,\beta,\gamma\in \mathcal{H}_{reg}$, 
    \begin{equation*}
        Z_{0, 3}(\alpha,\beta,\gamma) \coloneqq 
        \Vol \left[ M_{0, 3}(\alpha,\beta,\gamma) \right] \not=0 
    \end{equation*}
    only if $\sum_{i=1}^n\alpha_i+\sum_{i=1}^n\beta_i+\sum_{i=1}^n\gamma_i=n+d$ 
    for some $d\in\mathbb{N}$, in which case, 
    if $\widetilde{\gamma}=(1-\gamma_n,\ldots,1-\gamma_1)$,
    \begin{equation*}
        Z_{0, 3}(\alpha,\beta,\gamma) = 
        \frac{2^{(n+1) [2]}(2\pi)^{(n-1)(n-2)}} 
        {n! \Delta(\alpha) \Delta(\beta) \Delta(\gamma)} 
        \sum_{G\in\mathcal{G}_d} 
        \Vol \left[\honey_{n,d}^G(\alpha,\beta,\widetilde{\gamma})\right] \ ,
    \end{equation*}
    where for $\alpha \in \mathcal{H}_{reg}$, $\Delta(\alpha) 
    = 2^{n(n-1)/2} \prod_{i < j} \sin (\pi(\alpha_i - \alpha_j))$.
\end{theorem}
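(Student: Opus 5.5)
The plan is to transport the dual hive formula of \cite{françois2024positiveformulaproductconjugacy} to the honeycomb side, keeping track of how volumes change along the way. First, recall from \cite{françois2024positiveformulaproductconjugacy} that $Z_{0,3}(\alpha,\beta,\gamma)$ vanishes unless $|\alpha|+|\beta|+|\gamma|=n+d$ with $d\in\mathbb{N}$. The determinant of $U_1U_2U_3=\mathrm{Id}$ already forces $|\alpha|+|\beta|+|\gamma|\in\mathbb{N}$. In that case one has
\[
Z_{0,3}(\alpha,\beta,\gamma)=\frac{c_{0,3}}{\Delta(\alpha)\Delta(\beta)\Delta(\gamma)}\sum_{C}\Vol_{\mathrm{proj}}\left[\dualhive^C(\alpha,\beta,\widetilde{\gamma})\right],
\]
where the sum runs over color maps $C$. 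Here $\Vol_{\mathrm{proj}}$ is the volume used there: the pull-back of Lebesgue measure under a bijective projection of the affine space $A^C$ onto a set of free coordinates. The substitution $\gamma\mapsto\widetilde\gamma$ reflects the passage from the relation $U_1U_2U_3=\mathrm{Id}$ to the boundary convention $|\alpha|+|\beta|=|\gamma|+d$ of Definition \ref{def:limit_dual_hive}. I would verify this normalization, including the $\U(n)$ versus $\SU(n)$ comparison, directly against the statement in that reference.

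Second, Proposition \ref{prop:honey_hive} gives the partition $\honey_{n,d}=\bigsqcup_C\honey^{G^C}_{n,d}$ and bijections $\mathcal{L}\circ\rho_C$ from $\dualhive^C$ onto $\mathcal{F}_{G^C}(\phi)\cap K_{G^C}$, the latter by Corollary \ref{cor:paramietrization_label}. Both this map and its inverse are affine with integer matrices. Since $C\mapsto G^C$ is injective and every $G\in\mathcal{G}_d$ arises as some $G^C$, the sum over $C$ becomes the sum over $G\in\mathcal{G}_d$. It then remains to show, for each $C$, that
\[
\Vol_{\mathrm{proj}}\big[\dualhive^C\big]=\Vol\big[\mathcal{L}(\honey^{G^C})\big],
\]
with $\Vol$ the normalized measure \eqref{eq:definition_volume_form}.

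Third, fix a spanning tree of $G^C$ and let $S$ be its complement. By \eqref{eq:pushforward_phi_S}, the map $\varphi_S$ pushes $\diff\Vol$ to Lebesgue measure on $\Omega^1(S)\cong\mathbb{R}^S$. The composition $\varphi_S\circ\mathcal{L}\circ\rho_C$ is an affine bijection from $A^C$, expressed in free dual-hive coordinates, onto $\mathbb{R}^S$. By Proposition \ref{prop:BijectionTrees} it has integer coefficients, and its inverse, obtained from Proposition \ref{prop:honey_hive} composed with $\varphi_S^{-1}$, also has integer coefficients. The linear part is therefore in $GL_k(\mathbb{Z})$, so its determinant is $\pm1$ and Lebesgue measures correspond exactly. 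The only subtlety is checking that the coordinates used as $\Vol_{\mathrm{proj}}$ in \cite{françois2024positiveformulaproductconjugacy} form a $\mathbb{Z}$-basis of the lattice of $A^C$, so that their projection also has an integral inverse. If that fails, the determinant must be computed as a lattice index instead.

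I expect this last point to be the main obstacle: reconciling the precise volume convention of \cite{françois2024positiveformulaproductconjugacy}, namely which coordinates are projected onto and the normalization constant, with the spanning-tree normalization. Everything else is bookkeeping from the earlier propositions.
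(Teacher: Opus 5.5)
Your proposal is correct and follows the paper's proof. The paper likewise starts from the dual-hive formula of the cited work and composes the integral affine bijection $\mathcal{L}\circ\rho_C$ (onto $\mathcal{F}_{G^C}(\phi_{\alpha,\beta,\gamma})\cap K_{G^C}$) with $\varphi_S$ to get a map that is integral in both directions, hence of determinant $\pm1$, and then reindexes color maps by graphs via Proposition \ref{prop:honey_hive}. The normalization issue you flag as the main obstacle is settled there by two more facts quoted from that work: the integral isomorphisms $Rot[C\rightarrow C_0]$, and the integral projection $p_S$ on $\dualhive^{C_0}(\alpha,\beta,\gamma)$ with integral inverse, the cited volume being $\Vol_S$ of $Rot[C\rightarrow C_0](\dualhive^C)$; together these give exactly the unimodular coordinates you need, and your tracking of $\widetilde{\gamma}$ is if anything more careful than the written proof.
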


\noindent
Before proving this theorem, let us recall three results 
from \autocite{françois2024positiveformulaproductconjugacy}. 
\begin{enumerate}
\item For any pair $(C,C')$ of color maps, there exists a linear 
isomorphism $Rot[C\rightarrow C']$ 
from $\dualhive^C(\alpha,\beta,\gamma)$ to 
$\dualhive^{C'}(\alpha,\beta,\gamma)$ with integer coefficients 
on the canonical bases and such that 
$Rot[C\rightarrow C]=Id$ and $Rot[C\rightarrow C']^{-1}=Rot[C'\rightarrow C]$.
\item There exists a color map $C_0$ and a subset $S\subset E_{n,d}$ 
such that $p_S: \dualhive^{C_0}(\alpha,\beta,\gamma) \to \R^S$ which to a label map 
$L: E_{n, d} \to \R$ associates $(L(e))_{e \in S}$ is an isomorphism such that both the map and its inverse have integer coefficients in the canonical bases indexed by $E_{n,d}$ and $S$. 
\item We have the formula 
$$Z_{0, 3}(\alpha,\beta,\gamma) = 
    \frac{2^{(n+1) [2]}(2\pi)^{(n-1)(n-2)}} 
    {n! \Delta(\alpha) \Delta(\beta) \Delta(\gamma)} 
    \sum_{C} 
    \Vol_S\left[Rot[C\rightarrow C_0] \left(\dualhive^C(\alpha,\beta,\gamma)\right) \right],$$
    where the sum is over color maps $C: E_{n, d} \to \{0, 1, 3, m\}$ 
    and where $\Vol_S$ is the Lebesgue measure of dimension $|S|$.
\end{enumerate}
In the following proof, let us call an {\it integral} linear map $f:\mathbb{R}^E\rightarrow \mathbb{R}^F$ a linear map with integer coefficients in the canonical bases. Likewise, an integral isomorphism is an invertible linear map such that both a map and its inverse has integers coefficients in the canonical bases.
\begin{proof}[Proof of Theorem \ref{th:volume_flat_connection_0_3}] 
Let $C$ be a color map. By Proposition \ref{prop:BijectionTrees}, 
there exists a set $R \subset E$ such the restriction map $\varphi_{R} : 
\mathcal{F}_{G^C}(\phi_{\alpha,\beta,\gamma}))\rightarrow \Omega^1(R)$
 is an integral isomorphism. 
Denote by $i_{R}:\Omega^1(R)\rightarrow \Omega^1(G)$ the corresponding inverse map, 
which has thus affine with integer coefficients in the canonical basis 
and is a bijection from $\Omega^1(R)$ to $\mathcal{F}_{G^C}(\phi_{\alpha,\beta,\gamma})$. 
Since, by Proposition \ref{prop:honey_hive} and Corollary \ref{cor:paramietrization_label}, 
$\Psi^C:\Omega^1(G^C)\rightarrow \mathbb{E}^{E_{n,d}}$ is an affine integral map 
and a bijection from $\mathcal{F}_{G^C}(\phi_{\alpha,\beta,\gamma}))\cap K_{G^C}$ to $\dualhive^C
(\alpha,\beta,\gamma)$ and by (1) above, $Rot[C\rightarrow C_0]$ is an integral isomorphism from $\dualhive^C(\alpha,\beta,\gamma)$ 
to $\dualhive^{C_0}(\alpha,\beta,\gamma)$. We deduce that 
$$p_{S}\circ Rot[C\rightarrow C_0]\circ \Psi^C\circ i_R: \Omega^1(R) \rightarrow \R^S$$
is an integral isomorphism. 
Likewise, since by (2) above $p_S^{-1}:\mathbb{R}^S\rightarrow 
\dualhive^{C_0}(\alpha,\beta,\gamma)$ 
is an integral affine isomorphism and $\mathcal{L} \circ \rho_{C}$ is an 
integral affine isomorphism from 
$\dualhive^{C}(\alpha,\beta,\gamma)$ to $\mathcal{F}_{G^{C}}(\phi_{\alpha,\beta,\gamma})$,
$$ F \coloneqq \left(p_{S}\circ Rot[C\rightarrow C_0]\circ \Psi^C\circ i_R \right)^{-1} 
= \varphi_{R}\circ (\mathcal{L} \circ \rho_{C}) 
\circ Rot[C_0\rightarrow C] \circ p_S^{-1}: \mathbb{R}^S\rightarrow \Omega^1(R)$$
is an integral isomorphism. We deduce that  
its determinant as an isomorphism from $\Omega^1(R)$ to $\R^S$ has modulus one.
Hence,
\begin{align*}
\Vol_S\left[Rot[C\rightarrow C_0] \left(\dualhive^C(\alpha,\beta,\gamma)\right) \right] = 
& \Leb \left[u\in\mathbb{R}^S \mid Rot[C_0\rightarrow C]\circ p_{S}^{-1}(u) 
\in \dualhive^C(\alpha,\beta,\gamma) \right]
\\
=& \Leb\left[u\in\mathbb{R}^S \mid F(u) 
\in \varphi_R \left(\honey_{n,d}^C(\alpha,\beta,\gamma)\right) \right]\\
=& \Leb\left[z\in\Omega^1(R) \mid z\in \varphi_R\left(\honey_{n,d}^C(\alpha,\beta,\gamma)\right) \right]\\
=& \Vol \left[\honey_{n,d}^{G^C}(\alpha,\beta,\gamma)\right],
\end{align*}
where we used Proposition \ref{prop:divergence_volume} on the last equality. Hence, by (3),
\begin{align*}
Z_{0, 3}(\alpha,\beta,\gamma) &=
    \frac{2^{(n+1) [2]}(2\pi)^{(n-1)(n-2)}} 
    {n! \Delta(\alpha) \Delta(\beta) \Delta(\gamma)} 
    \sum_{C:E_{n,d}\rightarrow \{0,1,3,m\} \text{ color map}} 
     \Vol_S\left[Rot[C\rightarrow C_0] \left(\dualhive^C(\alpha,\beta,\gamma)\right) \right]\\
    &=\frac{2^{(n+1) [2]}(2\pi)^{(n-1)(n-2)}} 
    {n! \Delta(\alpha) \Delta(\beta) \Delta(\gamma)} 
    \sum_{C:E_{n,d}\rightarrow \{0,1,3,m\} \text{ color map}} 
    \Vol \left[\honey_{n,d}^{\Gamma^C}(\alpha,\beta,\gamma)\right]\\
    &=\frac{2^{(n+1) [2]}(2\pi)^{(n-1)(n-2)}} 
    {n! \Delta(\alpha) \Delta(\beta) \Delta(\gamma)} 
    \sum_{G\in\mathcal{G}_d} 
    \Vol\left[\honey_{n,d}^{G}(\alpha,\beta,\gamma)\right],
\end{align*}
where we used Proposition \ref{prop:honey_hive} for the last equality.
\end{proof}

\begin{remark}[Volume correspondance]
    \label{rmk:volume_correspondance}
    Recall that in \eqref{eq:parametrization_P}, we introduced the parametrization map 
    \begin{equation*}
        P: \honey^G(\alpha, \beta, \gamma) \to \R^E \ ,
    \end{equation*}
    which to a honecomb $h$ associates $P[h] = (|\mathcal{L}[h](e)|)_{e \in E}$. 
    For $S \subset E$, let us consider the map 
    \begin{align*}
        \eta_S: \Omega^1(S) & \rightarrow \R^S \\
        w & \mapsto (|w(e)|)_{e \in S} \ .
    \end{align*}
    Then, the pullback $(\eta_S)^* \diff \Vol$ of $\diff \Vol$ defined 
    in \eqref{eq:volume_R_E} on $\R^S$ viewed as a subspace of $\R^E$ satisfies 
    \begin{equation}
        (\eta_S)^* \diff \Vol = \Leb_{\Omega^1(S)} = (\varphi_S)_* \diff \Vol' \ ,
    \end{equation}
    where $\diff \Vol'$ is the volume form defined 
in \eqref{eq:definition_volume_form} and where we used \eqref{eq:pushforward_phi_S} 
for the last equality.
\end{remark}

\section{Sieving of honeycombs}
\label{sec:sieving_honeycombs}

Let $\mathcal{T}$ be an oriented surface with boundary obtained 
by gluing $N$ equilateral triangles $T^1,\ldots,T^N$ by pairs along some of their boundaries in an orientation reversing way, 
such that $p$ edges $L_1,\ldots,L_{p}$ of the equilateral triangles are not glued together. For each $1\leqslant i\leqslant N$, let $f_i:T\rightarrow \mathcal{T}$ be an orientation-preserving isometry from $T$ to $T^i$ (viewed as a subset of $\mathcal{T}$). For $1\leqslant j\leqslant p$, let $1 \leqslant s_j \leqslant N $ and 
$ 0 \leqslant \ell_j \leqslant 2$ 
be such that $L_j=\partial_{\ell_j}T^{s_j}$. Each edge $L_j$ has a natural orientation $\ell_j$ coming from the unique equilateral it belongs to.  

\noindent
Recall that a honeycomb on $\mathcal{T}$ is defined as a sequence $(h^i)_{1\leqslant i\leqslant N}$ of triangular honeycombs such that $t_i(h^i)_{\vert \partial_j T^i}=t_{i'}(h^{i'})_{\vert \partial_{j'}T^{i'}}$ whenever $\partial_jT^i$ is glued to $\partial_{j'}T^{i'}$, where for a triangular honeycomb $h=(\mathcal{E},c)$, $t_i(h)=\bigcup_{e\in\mathcal{E}}f_i(e)$. 

Let $h=(h^i)_{1\leqslant i\leqslant N}$ be such a honeycomb and denote by $G_i=(V_i,E_i)$ the structure graph of $h^i$ for $1\leqslant i\leqslant N$. Then, the structure graph $G(h)$ of $h$, as defined after 
Definition \ref{def:(g,p)_honey}, can be describe as 
$$G(h)=\left(\bigcup_{i=1}^NG_i\right)_{S*S'}$$ 
in the sense of Definition \ref{def:sieving}, 
where $S,S'\subset \bigcup_{i=1}^m\partial V_i$ are the vertices 
which are identified together. 
Hence, if $\mathcal{G}_\mathcal{T}$ denotes the set of structure graphs appearing in 
$\honey_{\mathcal{T}}$, there is an injective map
$$i:\mathcal{G}_{\mathcal{T}}\rightarrow  \mathcal{G}^N,$$
where $\mathcal{G}$ is the set of structure graphs appearing in the case $N=1$ and $i(G)$ is the tuple $(G_1,\ldots,G_N)$ such that $G=\left(\bigcup_{i=1}^NG_i\right)_{S*S'}$.
Moreover, $G(h)$
has $pn$ univalent vertices, 
$n$ of them being on each boundary component 
$L_j,\, 1\leqslant j\leqslant p$, and $\frac{(3N-p)n}{2}$ bivalent vertices, 
$n$ of them being on a same boundary of a triangle while being 
in the interior of the surface. 
For $1 \leqslant j \leqslant p$, let us denote by $(v^j_m)_{1\leqslant m\leqslant n}$ 
the univalent boundary vertices on $L_{j}$ ranked decreasingly with respect to 
their $(\ell_j+1)$-coordinate.

In Section \ref{subsec:parametrization_honeycombs}, 
we defined for each $G_i=(V_i,E_i)\in \mathcal{G}_{d_i}$ 
a map $s_i:V_i\rightarrow \{-1,1\}$ such that for each edge 
$\{v,v'\}\in E_i$, $s_i(v)s_i(v')=-1$. 
Let us denote by $\interior_3(V)$ the set of vertices of degree 
$3$ of $G$. By the construction of $G$ in terms of 
the graphs $G_i$ and the fact that all vertices of 
$\interior(V_i)$ have degree $3$, 
$$\interior_3(V)=\bigsqcup_{1\leqslant i\leqslant N}\interior(V_i) \ .$$
We can therefore extend the maps $s_i$ defined on each 
$G_i$ to a map $s:\interior_3(V) \to \{-1,1\}$.

\begin{definition}[Label map of a honeycomb]
    \label{def:map_L_arbitrary_honeycomb}
Let $G=(V,E)\in\mathcal{G}_{\mathcal{T}}$ and $h=(h_i)_{1\leqslant i\leqslant N}\in \honey_{\mathcal{T}}^G$, with $h_i=(\mathcal{E}_i,c_i)$ for $1\leqslant i\leqslant N$. Let $\phi:G(h)\rightarrow G$ be the corresponding graph isomorphism. Then, the \textit{label map} $\mathcal{L}[h]$ is the element of $\Omega^1(G)$ whose value at $(v,v') \in \vec{E}$ such that $v\in \interior_3(V)$ and $\{v,v'\}=\phi(\partial e)$ for $e\in \mathcal{E}_i$ is
$$\mathcal{L}[h](v,v')=s(v)L(e) \ ,$$
where $L$ is the coordinate 
map defined in Definition \ref{def:triangular_honey}. (2) for $h_i$.
\end{definition}
Remark that each edge $e \in E$ of $G$ is adjacent to at 
least one trivalent vertex, and if $e=\{v,v'\}$ with 
$v,v'\in \interior_3(V)$, then necessarily there exists 
$1\leqslant i\leqslant N$ such that $v,v'\in V_i$. 
Therefore $s(v)s(v')=-1$, so that $\mathcal{L}[h] \in \Omega^1(G)$. Hence, the latter construction yields a well-defined map 
$$\mathcal{L}:\honey_{\mathcal{T}}^G 
\rightarrow \Omega^1(G).$$

\begin{figure}[H]
    \centering
    \includegraphics[scale=0.6]{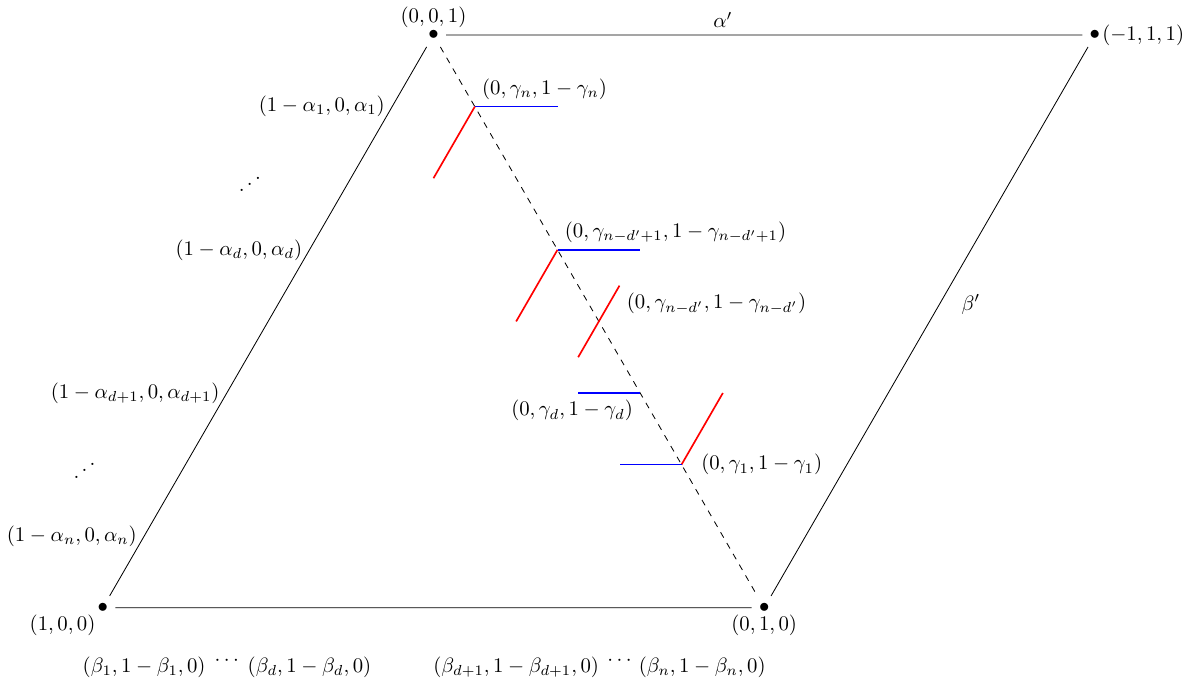}
    \caption{Line segments on the common edge of $h$ and the rotation of $h'$ 
    in $\R^3_{\Sigma=1}$. 
    Here, $d < d'$ which can be read from the orientation of line segments.}
    \label{fig:common_boundary} 
\end{figure}

\noindent
Let $\gamma^{(1)},\ldots,\gamma^{(p)}\in\mathcal{H}_{reg}$, 
and recall that $\honey_{\mathcal{T}}^G(\gamma^{(1)}, \ldots,\gamma^{(p)})$ 
denotes the set of honeycombs $h$ on $\mathcal{T}$ having boundary condition 
$$(v^j_s)_{\ell_j}=\gamma^{(j)}_s.$$
Let us define $\phi_{\gamma^{(1)},\ldots,\gamma^{(p)}}: 
V\rightarrow\mathbb{R}$ by 
\begin{equation}
    \label{eq:def_phi_p}
    \phi_{\gamma^{(1)},\ldots,\gamma^{(p)}}(v)=\left\lbrace\begin{aligned}
    &s(v)&\text{ if }& \deg v=3\\
    &1-c(v,v')-c(v,v'')&\text{ if } 
    & \deg v=2, v\sim v', v\sim v'', v'\not=v''\\
    &\gamma^{(j)}_s &\text{ if } & v=v^j_s, v\sim v'  \text{ and } c(v,v')=0 \\
    &\gamma^{(j)}_s - 1 & \text{ if } & v=v^j_s, v\sim v' \text{ and } c(v,v')=1
    \end{aligned} \right..
\end{equation}

\begin{lemma}[Flow of honeycomb]
    \label{lem:differential_gluing_honey}
    For any $G\in\mathcal{G}_{\mathcal{T}}$, $\mathcal{L}$ 
    is injective and for $\gamma^{(1)},\ldots,\gamma^{(p)} 
    \in\mathcal{H}_{reg}$, 
    $$\mathcal{L}(\honey_{\mathcal{T}}^G(\gamma^{(1)}, 
    \ldots,\gamma^{(p)})) = 
    \mathcal{F}_{G}(\phi_{\gamma^{(1)},\ldots,\gamma^{(p)}}) 
    \cap K_G \ ,$$
    where $i(G)=(G_1,\ldots,G_N)$, $\Omega^1(G)$ 
    is identified with $\prod_{i=1}^N\Omega^1(G_i)$ and 
    $$K_G=K_{G_1}\times \dots\times K_{G_N},$$
    with $K_{G_i}$ defined in Corollary \ref{cor:paramietrization_label}.
\end{lemma}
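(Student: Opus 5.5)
We reduce the statement to the triangular case, namely Proposition \ref{prop:parametrization_triangular_honey_first} and Corollary \ref{cor:paramietrization_label}, applied triangle by triangle. The gluing constraints are then handled through the product (sieving) description of Proposition \ref{prop:divergence_product}. Write $i(G)=(G_1,\dots,G_N)$, so that $G=(\bigcup_i G_i)_{S*S'}$. Sieving preserves edges bijectively, so $\Omega^1(G)\simeq\prod_i\Omega^1(G_i)$. Under this identification, $\mathcal{L}[h]$ restricted to $E_i$ is exactly $\mathcal{L}[h^i]$ for the triangular honeycomb $h^i$. This holds because $s$ on $\interior_3(V)$ is the union of the $s_i$, and every edge has a trivalent endpoint in a single $V_i$.

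\textbf{Injectivity.} If $\mathcal{L}[h]=\mathcal{L}[h']$ with $h,h'\in\honey^G_{\mathcal{T}}$, then $\mathcal{L}[h^i]=\mathcal{L}[h'^i]$ for every $i$. Each $h^i,h'^i$ lies in $\honey^{G_i}$, so Proposition \ref{prop:parametrization_triangular_honey_first} gives $h^i=h'^i$, and hence $h=h'$.

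\textbf{Inclusion $\subset$.} Let $h\in\honey^G_{\mathcal{T}}(\gamma^{(1)},\dots,\gamma^{(p)})$. Corollary \ref{cor:paramietrization_label} gives $\mathcal{L}[h^i]\in K_{G_i}\cap\mathcal{F}_{G_i}(\phi_i)$, where $\phi_i$ is the triangular boundary datum built from the boundary values of $h^i$. Since $K_G=\prod K_{G_i}$, the cone condition is immediate. For the divergence, we check $\mathrm{div}\,\mathcal{L}[h]=\phi_{\gamma}$ vertex by vertex, as in part (1) of Proposition \ref{prop:divergence_product}. At trivalent vertices the value is $s(v)$, by \eqref{eq:honeycomb_divergence_1}. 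At the univalent vertices $v^j_s$ the value is $\gamma^{(j)}_s$ or $\gamma^{(j)}_s-1$, by \eqref{eq:honeycomb_divergence_2}. At a bivalent vertex $\tilde w=\{w,g(w)\}$ obtained by gluing, the two edges come from two different triangles and meet the common side at the same point. The key computation is that the two flow contributions add up to $1-c(v,v')-c(v,v'')$. In each triangle, the flow on the boundary edge equals $\pm$(the coordinate of the point), shifted by $1$ when the colour is $1$. The orientation-reversing isometric identification sends the $(\ell+1)$-coordinate $x$ in one triangle to $1-x$ in the other. Adding the two triangular boundary values (with the $-1$ shifts for colour $1$) then produces exactly $1-c-c'$; the signs are fixed by $s(w)=-s(v')$.

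\textbf{Inclusion $\supset$.} Take $\omega\in\mathcal{F}_G(\phi_\gamma)\cap K_G$ and set $\omega_i=\omega|_{E_i}$. Part (1) of Proposition \ref{prop:divergence_product}, iterated over the gluings, shows that $\omega_i\in\mathcal{F}_{G_i}(\phi_i)$. Here $\phi_i$ is the triangular datum whose boundary values on glued sides are $x_w$ and $\phi(\tilde w)-x_w$, where $x_w$ is read off from $\omega$. These values define boundary triples $(\alpha^i,\beta^i,\gamma^i)$ for each triangle. Corollary \ref{cor:paramietrization_label} then yields a unique $h^i\in\honey^{G_i}$ with these boundaries and $\mathcal{L}[h^i]=\omega_i$. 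The relation $\phi(\tilde w)=1-c-c'$ is exactly the statement that the boundary points of $h^i$ and $h^{i'}$ coincide on each glued side, so $(h^i)$ is a $(g,p)$-honeycomb with structure graph $G$ and the prescribed outer boundary.

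\textbf{Expected obstacle.} The main obstacle is this last step. We must make sure that the reconstructed boundary values on glued sides are admissible, i.e. decreasing, in $[0,1)$, and regular with the right colour pattern, so that Corollary \ref{cor:paramietrization_label} applies; the corollary is stated for $\alpha,\beta,\gamma\in\mathcal{H}_{reg}$. We would first try to deduce this from the cone inequalities $K_{G_i}$ restricted to boundary edges together with the triangle relations. If necessary, we would go back through $\Psi$ and the dual hive constraints of Definition \ref{def:limit_dual_hive}, where strict boundary monotonicity is encoded by condition (2)(b)(ii).
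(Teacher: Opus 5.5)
Your proposal is essentially the paper's proof: identify $\Omega^1(G)$ with $\prod_i\Omega^1(G_i)$ through the sieving, get injectivity triangle by triangle from Proposition~\ref{prop:parametrization_triangular_honey_first}, and show that the outer boundary conditions and the gluing conditions $x_{\ell^{(1)}_j-1}=1-x'_{\ell^{(2)}_j-1}$ become exactly the prescribed values of $\phi_{\gamma^{(1)},\ldots,\gamma^{(p)}}$ at univalent and bivalent vertices. The point you isolate for the reverse inclusion is a real one: Corollary~\ref{cor:paramietrization_label} only applies to a triangle if the data read off on its glued sides lie in $\mathcal{H}_{reg}$ with the right colour pattern. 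The paper's proof does not address this either, since it simply treats these translations of the constraints as equivalences.
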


Let us introduce some notations to prove this lemma. Set $n(\mathcal{T})=\frac{3N-p}{2}$ and denote 
by $B_{1},\ldots,B_{n(\mathcal{T})}$ the segments of $\mathcal{T}$ 
corresponding to boundary of triangles identified together. 
Each $B_j$ is then adjacent to two triangles $T^{r^{(1)}_j}$ and $T^{r^{(2)}_j}$ 
and there exists $\ell^{(1)}_j,\ell^{(2)}_j$ such that 
$B_j=\partial_{\ell^{(1)}_j}T^{r^{(1)}_j}=\partial_{\ell^{(2)}_j}T^{r^{(2)}_j}$.
\noindent

\begin{proof}
    Let $G = (V, E)\in\mathcal{G}_T$. 
    First, by definition, 
    $$\honey_{\mathcal{T}}^G= \left\{(h^1,\ldots,h^N) \in 
    \honey_{T^{1}}^{G_1}\times\ldots\times \honey_{T^N}^{G_N} 
    \mid h^{r_j^{(1)}}_{\vert B_j } = 
    h^{r_j^{(2)}}_{\vert B_j}, 1\leqslant j\leqslant n(\mathcal{T}) \right\}.$$
    Next, recall that  
    $$V=\bigcup_{i=1}^NV_i \, / \left\langle 
    \partial_{\ell_j}G^{r_j^{(1)}}=\partial_{\ell'_j}G^{r_j^{(2)}} 
    , 1\leqslant j\leqslant n(\mathcal{T}) \right\rangle$$ 
    and that $E$ is the image of $\bigcup_{i=1}^N E_i$ 
    in this quotient. 
    Since any element of $\bigcup_{i=1}^N E_i$ has at 
    most one endpoint in $\bigcup_{j=1}^{n(\mathcal{T})} 
    \partial_{\ell_j}G^{r_j^{(1)}} \cup \partial_{\ell'_j}G^{r_j^{(2)}}$, 
    there is a canonical identification 
    $\Omega^1(G)=\prod_{j=1}^N\Omega(G^j)$. 
    Moreover, by the definition of $\mathcal{L}: 
    \honey_G^{\mathcal{T}}\to \Omega^1(G)$ from 
    Definition \ref{def:map_L_arbitrary_honeycomb}, 
    for $h\in \honey_G^{\mathcal{T}}$
    $$\mathcal{L}[h]=\left(\prod_{i=1}^N\mathcal{L}\right)(h^1,\ldots,h^N)$$
    under the previous identification. 
    The injectivity of 
    the map $\mathcal{L}$ from 
    Proposition \ref{prop:parametrization_triangular_honey_first} 
    yield the injectivity of $\mathcal{L}$.
    \\
    \\
    It remains to describe the image 
    $\mathcal{L}\left(\honey_{\mathcal{T}}^G
    (\gamma^{(1)},\ldots,\gamma^{(p)})\right)$. 
    By the previous reasoning, 
    this amounts to describe the image throught
    $\prod_{i=1}^N\mathcal{L}$ of the set
    $$\left\{(h^1,\ldots,h^N) \in 
    \honey_{T^{1}}^{G_1}\times\ldots\times \honey_{T^N}^{G_N} 
    \mid h^{r_j^{(1)}}_{\vert B_j } = 
    h^{r_j^{(2)}}_{\vert B_j}, 1\leqslant j\leqslant n(\mathcal{T}) \right\}.$$
    Let us first describe how the condition 
    $\partial_{\ell_j}h^{s_j}=\gamma^{(j)}$ for 
    $1\leqslant j\leqslant p$ translates through $\mathcal{L}$. 
    Let $v^j_m\in \partial_{\ell_j}G_{s_j}$ and 
    $x\in \partial_{\ell_j}h^{s_j}$ the point such that 
    $\iota(v^j_m) = x$. Since $h^{s_j}\in \honey_{G_{s_j}}$, 
    like in Section \ref{subsec:parametrization_honeycombs}, 
    the condition $x_{\ell_j}=\gamma^{(j)}_m$ is equivalent 
    to the condition 
    $$\mathcal{L}[h](v^j_m,v)=
    \gamma^{(j)}_m-c(v^j_m,v) = \phi_{\gamma^{(1)},\ldots,\gamma^{(p)}}(v) \ ,$$
    where $v$ is the unique element of $V_{s_{j}}$ 
    such that $v\sim v^j_m$.
    \\
    \\
    Let us now consider the condition $h^{r_j^{(1)}}_{\vert B_j } = 
    h^{r_j^{(2)}}_{\vert B_j}$. 
    Let $v\in \partial_{\ell^{(1)}_j}G_{r^{(1)}_j}$ 
    and $v'\in \partial_{\ell^{(2)}_j}G_{r^{(2)}_j}$ 
    such that $v\sim v'$ in $G$, and let 
    $x=\iota_{T^{r^{(1)}_j}}(v), x'=\iota_{T^{r^{(2)}_j}}(v')$ 
    the corresponding image in $h_{\vert T^{r^{(1)}_j}}$ 
    and $h_{\vert T^{r^{(2)}_j}}$. 
    Using that the edges of both triangles are identified in 
    the order-reversing way, the condition 
    $h^{r_j^{(1)}}_{\vert B_j } = 
    h^{r_j^{(2)}}_{\vert B_j}$ implies that 
    $$x_{\ell^{(1)}_j-1}=1-x'_{\ell^{(2)}_j-1} \ .$$
    Let $w\in V_{r^{(1)}}$ and $w'\in V_{r^{(2)}}$ 
    such that $v\sim_{G_{r^{(1)}}}w$ and $v'\sim_{G_{r^{(2)}}}w'$. 
    Then, following \eqref{eq:honeycomb_divergence_2}, 
    the previous equality is equivalent to 
    $\mathcal{L}[h^{r^{(1)}}](v,w)+c(v,w) = 
    1-\mathcal{L}[h^{r^{(1)}}](v,w)-c(v',w')$, 
    which yields
    $$\mathcal{L}[h](\bar{v},w)+\mathcal{L}[h](\bar{v},w') = 
    1-c(v,w)-c(v',w')= \phi_{\gamma^{(1)},\ldots,\gamma^{(p)}}(v) \ .$$
\end{proof}

\noindent
In order to state the volume formula for $(g,p)$ honeycombs, 
let us introduce for $r\in[0,1)$ the notation 
$$\mathcal{H}_{reg}^{r} = \left\{\gamma\in\mathcal{H}_{reg} \mid 
\sum_{i=1}^n\gamma_i=r\mod \mathbb{Z}\right\}.$$
This set is a union of affine polytopes of $\mathcal{H}_{reg}$ 
of dimension $n-1$, and one can check that the volume $d_Ju$ 
on each affine polytope induced by the projection on $\mathbb{R}^J$ 
is independent of $J$ for any $J\subset\{1,\ldots,n\}$ of 
cardinal $n-1$. We simply denote by $\diff u$ this volume form.

\begin{proposition}
    \label{prop:formula_gluing_honey}
    Suppose that $\mathcal{T}'$ is obtained by 
    gluing $\mathcal{T}$ and $T^{N+1}$ along the boundaries 
    $L_p$ and $\partial_0 T^{N+1}$. 
    Then, for all $G\in \mathcal{T}'$ and 
    $\gamma^1,\ldots,\gamma^{p+1}\in \mathcal{H}_{reg}$ such that 
    $\sum_{i=1}^{p+1}\vert \gamma_i\vert\in \mathbb{N}$,
    \begin{align*}
    \sum_{G\in\mathcal{G}_{\mathcal{T}'}} & 
    \Vol \left[ \mathcal{L}\left(\honey_{\mathcal{T}'}^{G}(\gamma^1,\ldots,\gamma^{p+1})\right) \right]\\
    =& \int_{\mathcal{H}_{reg}^{\theta} } 
    \sum_{(G_1,G_2)\in\mathcal{G}\times\mathcal{G}_{\mathcal{T}}} 
    \Vol \left[ \mathcal{L}\left(\honey_{\mathcal{T}}^{G_2}(\gamma^1,\ldots,\gamma^{p-1},u)\right) \right] 
    \Vol \left[ \mathcal{L}\left(\honey^{G_1}(\tilde{u},\gamma^p,\gamma^{p+1})\right) \right] \diff u,
    \end{align*}
    where $\tilde{u}=(1-u_n,\ldots,1-u_1)$ and $\theta=-\sum_{i=1}^{p-1}\vert \gamma^i\vert\mod \mathbb{Z}$.
\end{proposition}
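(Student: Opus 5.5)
The plan is to reduce the statement to the product formula of Proposition \ref{prop:divergence_product}, applied graph by graph, together with the flow description of Lemma \ref{lem:differential_gluing_honey}.

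\emph{Step 1: split the graphs.} Fix $G\in\mathcal{G}_{\mathcal{T}'}$. The structure graph of a honeycomb on $\mathcal{T}'$ restricts to $\mathcal{T}$ and to $T^{N+1}$, so the injection $i$ gives a unique pair $(G_2,G_1)\in\mathcal{G}_{\mathcal{T}}\times\mathcal{G}$ with
\[
G=(G_2\cup G_1)_{W*W'},
\]
where $W$ is the set of $n$ univalent vertices of $G_2$ on $L_p$ and $W'$ is the set of those of $G_1$ on $\partial_0T^{N+1}$. The bijection $g$ is the order-reversing matching coming from the orientation-reversing identification. Conversely, a pair $(G_2,G_1)$ contributes only if its boundary data match. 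Pairs whose boundary data are incompatible should give zero volume on the right-hand side, because their honeycomb sets are then empty for almost every $u$. So the sum over $\mathcal{G}_{\mathcal{T}'}$ should become a sum over pairs.

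\emph{Step 2: apply the product formula.} By Lemma \ref{lem:differential_gluing_honey},
\[
\mathcal{L}(\honey^G_{\mathcal{T}'})=\mathcal{F}_G(\phi)\cap(K_{G_2}\times K_{G_1}),
\]
with $\phi=\phi_{\gamma^1,\ldots,\gamma^{p+1}}$. On each glued bivalent vertex $\tilde w$, $\phi$ takes the value $1-c-c'$. Proposition \ref{prop:divergence_product}(2), with $K_1=K_{G_2}$ and $K_2=K_{G_1}$, then writes the volume as an integral over $(x_w)_{w\in W\setminus\{w_0\}}\in\mathbb{R}^{n-1}$. The remaining coordinate $y$ is fixed by the total divergence condition on $G_2$. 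The integrand is a product of $\Vol[\mathcal{F}_{G_2}(\ldots)\cap K_{G_2}]$ and $\Vol[\mathcal{F}_{G_1}(\ldots)\cap K_{G_1}]$.

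\emph{Step 3: change variables.} Set $u_s=x_{w_s}+c(w_s,\cdot)$, which are the boundary heights on $L_p$. This is the dictionary \eqref{eq:honeycomb_divergence_2}, and it turns the $G_2$ factor into $\Vol[\mathcal{L}(\honey^{G_2}_{\mathcal{T}}(\gamma^1,\ldots,\gamma^{p-1},u))]$, again by Lemma \ref{lem:differential_gluing_honey}. On the $G_1$ side the prescribed flow is $\phi(\tilde w)-x_w=1-c'-u_s$. Reading this in the coordinates of $T^{N+1}$ gives the reversed boundary $\tilde u=(1-u_n,\ldots,1-u_1)$. For regular honeycombs this matches the boundary condition of $\honey^{G_1}(\tilde u,\gamma^p,\gamma^{p+1})$.

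The divergence constraint on $G_2$ says that $\sum_s u_s+\sum_{i<p}|\gamma^i|$ is an integer, namely $(p-1-2)n$ plus the color count. So $u$ ranges over $\mathcal{H}^\theta_{reg}$, and the free coordinates $(x_w)_{w\neq w_0}$ are exactly the projection onto $n-1$ of the $u$'s. This is the projection defining $\diff u$, with unit Jacobian since the map is a translation. The ordering constraint $u_1>\dots>u_n$ is automatic from the cone $K_{G_2}$, and the quantity is integrated piecewise over the components of $\mathcal{H}^\theta_{reg}$ (one per value of $d$).

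\emph{Main obstacle: normalisation.} The subtle point is that $\Vol$ carries the factor $1/\sqrt{\#\text{spanning trees}}$ from \eqref{eq:definition_volume_form}, and it is not multiplicative under sieving. I would handle it as in the proof of Proposition \ref{prop:divergence_product}. Pushing forward via \eqref{eq:pushforward_phi_S} along the spanning trees $T_1$, $T_2$, and the tree of $\tilde G$ obtained by removing $e_1,\ldots,e_{n-1}$, turns every volume into a Lebesgue measure on $\Omega^1(S)$-coordinates. The identity then reduces to Fubini in those coordinates. I would also double-check the sign and color bookkeeping at the glued vertices, in particular the case $c=1$ versus $c=0$ giving $u$ versus $1-u$.
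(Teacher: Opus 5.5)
Your proposal is correct and follows the paper's own argument: write $G=(G_2\cup G_1)_{W*W'}$ via the injection $i$, apply Proposition \ref{prop:divergence_product}(2) to $(K_{G_2}\times K_{G_1})\cap\mathcal{F}_G(\phi)$, identify the two factors with honeycomb volumes through Lemma \ref{lem:differential_gluing_honey}, and resum over pairs, with the spanning-tree normalisation already absorbed in the product formula (your translation $u_s=x_{w_s}+c$ and the resulting $\tilde u$ spell out what the paper leaves implicit). One harmless slip: the integer you name in Step 3 is not $(p-3)n+d_2$ in general (counting over triangles and glued edges gives $(1-g)n+d_2$, with $g$ the genus of $\mathcal{T}$), but the argument only uses that it is an integer.
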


\begin{proof}
Remark that from \eqref{eq:def_phi_p},
$$\sum_{v\in V}\phi_{\gamma^{(1)},\ldots,\gamma^{(p)}}(v) = 
\sum_{j=1}^p\sum_{i=1}^n\gamma_s^{(j)}+\sum_{i=1}^N 
\left(\sum_{v\in \interior(V_i)}s(v)-\sum_{v\in\partial V_i, v\sim v'} 
c(v,v')\right).$$
Hence, in order for $\phi_{\gamma^{(1)},\ldots,\gamma^{(p)}}$ 
to yields a non-empty set $\mathcal{F}_{G}(\phi_{\gamma^{(1)}, 
\ldots,\gamma^{(p)}})$, 
by Proposition \ref{prop:BijectionTrees} it is necessary that 
$\sum_{i=1}^p\vert\gamma^{(i)}\vert\in \mathbb{N}$. 
Hence, $\honey_{\mathcal{T}}^{G_2}(\gamma^1,\ldots,\gamma^{p-1},u)$ 
is non-empty if and only if $u\in\mathcal{H}_{reg}^\theta$ with 
$\theta-\sum_{i=1}^{p-1}\vert \gamma^i\vert\mod \mathbb{Z}$. 
Using Proposition \ref{prop:divergence_product} (2) gives
\begin{align*}
\int_{\mathcal{H}_{reg}^{\theta} } 
    & \Vol \left[ \mathcal{L}\left(\honey_{\mathcal{T}}^{G_2}
    (\gamma^1,\ldots,\gamma^{p-1},u)\right) \right] 
    \Vol \left[ \mathcal{L}\left(\honey^{G_1} 
    (\tilde{u},\gamma^p,\gamma^{p+1})\right) \right] \diff u \\
&= \Vol\left[(K_{G_1}\times K_{G_2})\cap 
\mathcal{F}_{(G_1 \cup G_2)_{S_1 * S_2}} 
\left(\phi_{\gamma^{(1)},\ldots,\gamma^{(p+1)}}\right)\right] \ .
\end{align*}
By Lemma \ref{lem:differential_gluing_honey}, 
the latter volume is exactly
\begin{equation*}
    \Vol \left[\mathcal{L}\left( \honey_{\mathcal{T}}^G \left(\gamma^{(1)}, 
    \ldots,\gamma^{(p+1)}\right) \right) \right] \ .
\end{equation*}
Therefore, 
\begin{align*}
\sum_{(G_1,G_2)\in\mathcal{G}\times\mathcal{G}_{\mathcal{T}}} 
&\int_{\mathcal{H}_{reg}^{\theta} } 
     \Vol \left[ \mathcal{L}\left(\honey_{\mathcal{T}}^{G_2}
    (\gamma^1,\ldots,\gamma^{p-1},u)\right) \right] 
    \Vol \left[ \mathcal{L}\left(\honey^{G_1} 
    \left(\tilde{u},\gamma^p,\gamma^{p+1}\right)\right) \right] \diff u\\
&= \sum_{G\in\mathcal{G}_{\mathcal{T}'}} 
\Vol \left[\mathcal{L}\left( \honey_{\mathcal{T}}^G \left(\gamma^{(1)}, 
    \ldots,\gamma^{(p+1)}\right) \right) \right] \ .
\end{align*}
\end{proof}

\noindent
A similar reasoning using Proposition \ref{prop:divergence_contraction} 
yields the following proposition.
\begin{proposition}
    \label{prop:formula_contracting_honey}
Suppose that $\mathcal{T}'$ is obtained by gluing two boundaries of $\mathcal{T}$. 
Then, for each $G\in \mathcal{G}_{\mathcal{T}'}$, $\honey_{\mathcal{T}'}^{G}$ admits 
a volume form with, for $\gamma^1,\ldots,\gamma^{p-2}\in \mathcal{H}_{reg}$ 
with $\sum_{i=1}^{p-2}\vert \gamma^i\vert\in\mathbb{Z}$,
\begin{align*}
\sum_{G\in\mathcal{G}_{\mathcal{T}'}} 
\Vol \left[\mathcal{L}\left(\honey_{\mathcal{T}'}^G \left(\gamma^{(1)}, 
    \ldots,\gamma^{(p-2)}\right)\right)\right] = 
    \int_{\mathcal{H}_{reg}} 
    \sum_{G\in\mathcal{G}_{T}} 
    \Vol \left[\mathcal{L}\left(\honey_{\mathcal{T}}^G\left(\gamma^{(1)}, 
    \ldots,\gamma^{(p-2)}, u , \tilde{u}\right)\right)\right] \diff u \ .
\end{align*}
\end{proposition}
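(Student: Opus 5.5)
The plan is to mirror the proof of Proposition \ref{prop:formula_gluing_honey}, replacing the product formula by the contraction formula, Proposition \ref{prop:divergence_contraction}.

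Let the two glued boundaries of $\mathcal{T}$ be $L_{p-1}$ and $L_p$. Fix a structure graph $G'\in\mathcal{G}_{\mathcal{T}'}$. Cutting $\mathcal{T}'$ back along the common edge $B=L_{p-1}=L_p$ yields a graph $G\in\mathcal{G}_{\mathcal{T}}$ such that
\[
G'=G_{W*W'},
\]
where $W$ and $W'$ are the $n$ univalent vertices on $L_{p-1}$ and $L_p$, and $g$ matches them in the order-reversing way. Conversely, every $G\in\mathcal{G}_{\mathcal{T}}$ whose boundary data on $L_{p-1},L_p$ is compatible gives such a $G'$. Since the triangle decomposition of $\mathcal{T}'$ refines to that of $\mathcal{T}$, the map $G\mapsto G_{W*W'}$ is a bijection between the relevant graphs. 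Graphs $G$ that are incompatible contribute zero, because their honeycomb sets with boundary $(u,\tilde u)$ are empty. Hence
\[
\sum_{G'}\ \text{(left side)}\ \longleftrightarrow\ \sum_{G}\ \text{(right side)}
\]
term by term.

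Next, for fixed $G$, apply Lemma \ref{lem:differential_gluing_honey} to $G'$. It identifies
\[
\mathcal{L}\bigl(\honey^{G'}_{\mathcal{T}'}(\gamma^{(1)},\dots,\gamma^{(p-2)})\bigr)=\mathcal{F}_{G'}(\phi')\cap K_{G'}.
\]
At the new bivalent vertices $\tilde w$ one has $\phi'(\tilde w)=1-c(w,\cdot)-c(g(w),\cdot)$, and $K_{G'}=K_G$ under the canonical edge identification $\Omega^1(G')=\Omega^1(G)$. Proposition \ref{prop:divergence_contraction} with $K=K_G$ then writes this volume as an integral over $x\in\mathbb{R}^{|W|-1}$ of $\Vol[K_G\cap\mathcal{F}_G(\ldots,(x_w),(\phi'(\tilde w)-x_{g^{-1}(w')}))]$.

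Now compare with \eqref{eq:def_phi_p}. The boundary flow $x_w$ on $L_{p-1}$ corresponds to a boundary value $u_s=x_w+c$ with $c\in\{0,1\}$. The complementary flow $\phi'(\tilde w)-x_w$ then equals $1-u_s-c(g(w),\cdot)$, which is exactly the boundary datum for $\tilde u$ on $L_p$, with the reversal of indices produced by the orientation-reversing gluing. By Lemma \ref{lem:differential_gluing_honey} applied to $G$, each integrand is therefore $\Vol[\mathcal{L}(\honey^G_{\mathcal{T}}(\gamma^{(1)},\dots,\gamma^{(p-2)},u,\tilde u))]$.

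The change of variables $x\mapsto u$ is a translation by integers, hence volume-preserving. The one remaining coordinate (the $n$-th $x_w$) is fixed by the divergence constraint $\sum\phi=0$. However, $|u|+|\tilde u|=n$, so this constraint is automatic here, and the $n$-th direction is a genuine free variable. I expect this to be the main obstacle: matching the dimension count of Proposition \ref{prop:divergence_contraction}, which integrates over $|W|-1$ variables, with the $n$-dimensional integral over $\mathcal{H}_{reg}$. Connectedness of $G$ loses one constraint, since the total divergence condition on $G$ becomes trivial, so the contraction in fact frees all $|W|$ variables.

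I would settle this first by redoing the spanning-tree argument of Proposition \ref{prop:divergence_product}(2) directly. Take a spanning tree $T$ of $G$ and delete all $n$ edges from $W$ to $G$. This leaves a spanning tree of $G'$ only after adding back one cycle edge, which yields the correct count of free variables, and the integral-coefficient property of Proposition \ref{prop:BijectionTrees} keeps the Jacobian equal to $1$. Finally, sum over $G$ and restrict the integration to $\mathcal{H}_{reg}$, using that the regularity and order of $u$ are forced by $K_G$.
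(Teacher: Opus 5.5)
Your route is the paper's: its proof of this statement is the one sentence ``a similar reasoning using Proposition~\ref{prop:divergence_contraction}''. That means Lemma~\ref{lem:differential_gluing_honey} on both surfaces, the contraction formula, and the integer translation $x\mapsto u$, which is exactly what you propose. Your boundary bookkeeping is correct: $\phi'(\tilde w)-x_w=1-u_s-c(g(w),\cdot)$ is indeed the datum of $\tilde u$. You are also right that the dimension count in Proposition~\ref{prop:divergence_contraction} as printed is off. Its integrand involves all $|W|$ values $x_w$, and since $G$ is connected the total-divergence condition on $G$ holds automatically for every $x\in\mathbb{R}^W$. So the integral must run over $\mathbb{R}^{|W|}=\mathbb{R}^n$, which is what the $n$-dimensional integral over $\mathcal{H}_{reg}$ requires.

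However, your repair of that step is misstated. A spanning tree $T$ of $G$ contains every leaf edge. Hence $T\setminus\{e_w : w\in W\}$ is a tree on $V\setminus W$, and the quotient map identifies $V\setminus W$ bijectively with the vertex set of $G'$. So deleting the $n$ edges $e_w$ \emph{already} gives a spanning tree of $G'$. Adding back a cycle edge would create a cycle and leave only $n-1$ free boundary coordinates, contradicting the count you correctly want.

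The correct argument is as follows.
\begin{enumerate}
\item Take $S=(E\setminus T)\cup\{e_w : w\in W\}$ as the cotree in $G'$ and $E\setminus T$ as the cotree in $G$.
\item By Proposition~\ref{prop:divergence_volume} (normalization \eqref{eq:pushforward_phi_S}), both $\varphi_S$ and $\varphi_{E\setminus T}$ push $\diff\Vol$ forward to Lebesgue measure.
\item The fibre of $\mathcal{F}_{G'}(\phi')$ over $x=(x_w)_{w\in W}$ is $\mathcal{F}_G(\ldots,x,\phi'(\tilde w)-x)$.
\item Fubini then gives the formula over $\mathbb{R}^W$ with no Jacobian at all, so integrality of the coefficients is not needed.
\end{enumerate}

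Finally, the restriction to $\mathcal{H}_{reg}$ is best justified through Lemma~\ref{lem:differential_gluing_honey} on $\mathcal{T}'$, not by an unproved claim about the cone $K_G$. Every flow in $\mathcal{F}_{G'}(\phi')\cap K_{G'}$ is $\mathcal{L}$ of a honeycomb on $\mathcal{T}'$. Its $n$ distinct crossing points on the glued edge give $u\in\mathcal{H}_{reg}$, so the fibres over all other $x$ are empty.
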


\section{Proof of Theorem \ref{th:Z_g_p_0} : 
volume of flat $\U(n)$-connections on a compact surface}
\label{sec:proof_of_th_flat_connections}

\noindent
The goal of this section is the proof of Theorem \ref{th:Z_g_p_0}, which gives a 
volume expression for the volume $M_{g,n}(\alpha_1,\ldots,\alpha_p)$ of flat $\SU(n)$--connections on a surface $\mathcal{M}$ of 
genus $g$ with $p$ boundary components for $\alpha_1,\ldots,\alpha_p\in\mathcal{H}_{reg}$. 
Recall that $\mathcal{H}_{reg}^0$ denotes the set of regular conjugacy classes of $\SU(n)$.

\subsection{Parametrizations of conjugacy classes and volume form} 

Let us consider the standard parametrization of conjugacy classes in $\SU(n)$ 
given by
$$\mathcal{A}= \left\{t_1\geqslant\ldots\geqslant t_n \mid \sum_{i=1}^nt_i=0, \ t_1-t_n\leqslant 1 \right\}.$$
The set $\mathcal{A}$ is called an alcove of type $A_{n-1}$. 
Remark that $\mathcal{A}$ is a polytope of dimension $n-1$ in $\mathbb{R}^n$, 
and for any $R\subset \{1,\ldots,n\}$ of cardinal $n-1$, the projection 
$p_{R}:\mathcal{A}\rightarrow \mathbb{R}^R$ yields a non-zero volume form 
$p_{R}^{*}d\ell_{\mathbb{R}^R}$ on $\mathcal{A}$. 
This volume form is independent of $R$ and denoted by $\diff t$ in the sequel. 
Choosing for example $R=\{1,\ldots n-1\}$, we have
\begin{align*}
Vol(\mathcal{A})=&\int_{\mathbb{R}^{n-1}}\indic_{t_1\geqslant\ldots\geqslant t_{n-1},\,t_1+\sum_{i=1}^{n-1}t_i\leqslant 1}\prod_{i=1}^{n-1} \diff t_i\\
=&\int_{\mathbb{R}^{n-1}}\indic_{1\geqslant u_1\geqslant\ldots\geqslant u_{n-1}\geqslant 0}\frac{1}{n}\prod_{i=1}^{n-1} \diff u_i=\frac{1}{n!} \ ,
\end{align*}
where we did the change of variable $\eta:(t_i)_{1\leqslant i\leqslant n-1}\mapsto (t_i+\sum_{j=1}^{n-1}t_j)_{1\leqslant i\leqslant n-1}$ with $Jac(\eta(t))=n$.

Denote by $\mathcal{A}_{reg}$ the subset of $\mathcal{A}$ consisting of tuples $(t_1>\ldots>t_n)$ with $t_1-t_n<1$. The set $\mathcal{A}_{reg}$ parametrizes then the regular conjugacy classes of $\SU(n)$, as $\mathcal{H}^0_{reg}$ does. There is moreover a piecewise affine bijection 
$\phi:\mathcal{A}_{reg}\rightarrow\mathcal{H}^0_{reg}$ whose value on $(t_1>\ldots> t_i> 0>t_{i+1}> \dots> t_n)$ is 
\begin{equation}\label{eq:change_parametrization_conjugacy}
\phi(t_1,\ldots,t_n) =(1+t_{i+1},\ldots,1+t_{n},t_1,\ldots,t_{i}) \ .
\end{equation}
One has $Jac(\phi(t))=1$ for all $t\in\mathcal{A}_{reg}$, and thus $\phi$ is volume preserving. One then checks that $\phi^* \diff \theta= \diff t$.

\subsection{Contraction formula on moduli spaces of flat connections} 
\label{subsec:contraction_formulas_flat_co}

Let $\widehat{\mathcal{M}}$ be a (possibly disconnected) oriented surface with $(p+2)$ boundary components 
$L_1,\ldots,L_p,L_{p+1}, L_{p+2}$. Let $\mathcal{M}$ be the oriented surface obtained by gluing $L_{p+1}$ 
and $L_{p+2}$ in an orientation reversing way and suppose that $\mathcal{M}$ is connected. 
Then, if we denote by $M(\mathcal{M},\alpha_1,\ldots,\alpha_p)$ the moduli space of flat $\SU(n)$-connections on $\mathcal{M}$, 
the following formula from \cite{Meinrenken_Woodward} holds for the its volume.

\begin{theorem}[{\autocite[Prop. 5.4]{Meinrenken_Woodward}}]\label{thm:contraction_formula_connection}
Suppose that $\alpha_1,\ldots,\alpha_p\in \mathcal{H}_{reg}^0$. If $M(\mathcal{M},\alpha_1,\ldots,\alpha_p)$ 
contains at least one connection whose stabilizer is $Z(\SU(n))$, then
\begin{align*}
\Vol(M(\mathcal{M},\alpha_1,\ldots,\alpha_p))=\frac{1}{k} 
\int_{\mathcal{A}}\Vol(M(\widehat{\mathcal{M}},\alpha_1,\ldots,\alpha_p,\phi(t),\phi(-t)))\Delta(t)^2 \diff t \ ,
\end{align*}
where $k=1$ if $\widehat{\mathcal{M}}$ is connected and $\#Z(\SU(n))=n$ otherwise, 
$dt$ is the previous volume form on $\mathcal{A}$ and 
$\Delta(t)=2^{n(n-1)/2}\prod_{1\leqslant i<j\leqslant n}\sin\left(\pi(t_i-t_j)\right)^2$.
\end{theorem}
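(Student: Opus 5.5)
The plan is to realise gluing along $L_{p+1}\sim L_{p+2}$ as a quasi-Hamiltonian reduction, and then compute volumes by reducing in stages together with the Duistermaat--Heckman theorem, in the framework of \cite{Meinrenken_Woodward}.

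\textbf{Step 1 (extended moduli space).} Let $\widehat{X}$ be the moduli space of flat $\SU(n)$-connections on $\widehat{\mathcal{M}}$ with holonomies in $\mathcal{O}(\alpha_1),\dots,\mathcal{O}(\alpha_p)$ around $L_1,\dots,L_p$. Around $L_{p+1},L_{p+2}$ the holonomies are left free, but the connection is framed at one base point on each of these two circles. This is a quasi-Hamiltonian $\SU(n)\times\SU(n)$-space whose group-valued moment map is $(g_{p+1},g_{p+2})$, the two framed boundary holonomies.

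\textbf{Step 2 (gluing as reduction).} Gluing the two boundaries in an orientation-reversing way identifies
\[
M(\mathcal{M},\alpha_1,\dots,\alpha_p)=\{g_{p+1}g_{p+2}=1\}/\SU(n),
\]
where $\SU(n)$ acts through the diagonal. I will split this reduction into two stages. The first stage fixes the conjugacy class of the holonomy $g_{p+1}$ along the glued curve to be $\phi(t)$, with $t\in\mathcal{A}_{reg}$. The second stage reduces at the identity.

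\textbf{Step 3 (fibration over the alcove).} This produces a map from the open dense part of $M(\mathcal{M},\dots)$ onto $\mathcal{A}_{reg}$, recording the class of the glued holonomy. Its fibre over $t$ is a torus bundle over the reduced space
\[
M(\widehat{\mathcal{M}},\alpha_1,\dots,\alpha_p,\phi(t),\phi(-t)).
\]
The torus is $T/Z(\SU(n))$ and parametrises the twist parameter, i.e.\ the choice of relative framing modulo the centraliser of a regular element. The moment map to $\mathcal{A}$ and the twist angles are then symplectically dual coordinates. Duistermaat--Heckman therefore gives the volume of the fibre over $t$ as the reduced volume times the torus volume. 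I then identify the Jacobian between $\diff t$ and the Liouville form on the $\mathcal{A}\times T$ directions. With the normalisation of the volume on $\SU(n)$ used for $\mathcal{O}(\phi(t))$, whose volume is proportional to $\Delta(t)^2$, this Jacobian should produce exactly the density $\Delta(t)^2\,\diff t$.

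\textbf{Step 4 (centre).} The factor $1/k$ comes from the centre. If $\widehat{\mathcal{M}}$ is disconnected, the centre $Z(\SU(n))$ acts independently on the two components, so the fibre product overcounts by $\#Z=n$. If $\widehat{\mathcal{M}}$ is connected, no such overcounting occurs. The hypothesis that some connection has stabiliser exactly $Z(\SU(n))$ ensures that the reduced spaces are generically orbifolds with trivial effective stabiliser. It also ensures that the set of singular $t$ (the walls of $\mathcal{A}$) has measure zero, so integrating over $\mathcal{A}$ is legitimate.

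\textbf{Main obstacle.} I expect the hardest step to be Step 3's constant bookkeeping. This means checking that the normalisation of the symplectic volume on conjugacy classes, of the torus volume, and of $\diff t$ combine to precisely $\Delta(t)^2\diff t$ with no extra power of $2\pi$ or $n$. I would first verify it in the simplest case $(g,p)=(1,0)$ glued from $(0,2)$, where $M(\widehat{\mathcal{M}},\phi(t),\phi(-t))$ is a point. Both sides are then explicit there and can be matched against Witten's formula.
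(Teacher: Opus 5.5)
\textbf{There is a genuine gap in Step~3: the weight $\Delta(t)^2$ cannot come from the Jacobian you describe.} Your outline is otherwise a sound strategy: fuse the two framed boundary holonomies, reduce in stages over the class $t$ of the glued holonomy, and apply Duistermaat--Heckman to the twist torus. It is, in spirit, how the result is proved in \cite{Meinrenken_Woodward}, in loop-group language. The paper does not reprove it; it quotes \cite[Prop.~5.4]{Meinrenken_Woodward} and converts between volume conventions by inserting the $\Delta$ factor.

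The problem is not one of constants. By your own Step~3, $t$ is the moment map of the twist torus and the twist angles $\theta$ are its conjugate coordinates. So the Liouville measure in the $\mathcal A\times T$ directions is a \emph{$t$-independent} multiple of $\diff t\,\diff\theta$. Duistermaat--Heckman therefore gives, with Liouville volumes on both sides,
\[
\Vol\bigl(M(\mathcal M,\alpha_1,\ldots,\alpha_p)\bigr)=\frac{c}{k}\int_{\mathcal A}\Vol\bigl(M(\widehat{\mathcal M},\alpha_1,\ldots,\alpha_p,\phi(t),\phi(-t))\bigr)\,\diff t ,
\]
with $c$ a constant. No Jacobian can create $\Delta(t)^2$. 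The volume of $\mathcal O(\phi(t))$ does not enter as a separate factor either, because the two new conjugacy classes are already built into the reduced space whose volume you are measuring. For $\SU(2)$ and two three-holed spheres this is the familiar fact that the area of the four-holed-sphere moduli space is, up to a constant, the length of the moment interval of the circle action. That is a piecewise linear function of the boundary data; a weight $\Delta(t)^2$ would make it non-polynomial.

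The weight appears only when $\Vol$ means the Liouville volume divided by $\Delta(\alpha_j)$ for each boundary component, reading $\Delta$ as the unsquared denominator introduced before Theorem~\ref{th:Z_g_p_0}. This is the normalization in which the paper's $Z_{g,p}$ are written; note the prefactor $1/\prod_j\Delta(\alpha_j)$ in Theorem~\ref{th:Z_g_p_0}. In that convention the weight is simply $\Delta(\phi(t))\,\Delta(\phi(-t))=\Delta(t)^2$, contributed by the two new boundary circles. You need to fix this convention explicitly and derive the weight from it. As written, your argument proves the unweighted identity and then asserts the weighted one.

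\textbf{The factor $1/k$ is misattributed.} When $\widehat{\mathcal M}$ is connected, the torus acting on the fibre is all of $T$. Twisting by $z\in Z(\SU(n))$ multiplies the holonomy along a loop crossing the glued curve by $z$, and this is not a gauge transformation when the stabiliser is $Z(\SU(n))$. Only in the disconnected case is the fibre $T/Z(\SU(n))$, because there a central twist is a constant gauge transformation on one component. The ratio $n$ of these fibre volumes is the whole source of $1/k$. Taking $T/Z(\SU(n))$ in both cases and then adding a separate overcount by $n$ in the disconnected case gets the ratio right only by accident, with both absolute constants off by $n$.

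\textbf{Your calibration case violates the hypothesis.} The case $(g,p)=(1,0)$ glued from $(0,2)$ is excluded: commuting pairs have generic stabiliser a maximal torus, and the annulus reduced space is a point with stabiliser $T$. So it cannot pin down the constants. Use instead, for instance, $(0,4)$ glued from two copies of $(0,3)$. Alternatively, use Witten's character expansion together with the Weyl integration formula, which is exactly where $\Delta(t)^2$ comes from in the normalized (torsion-type) convention.
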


\noindent
Note that we slightly adapted the result of \cite{Meinrenken_Woodward}, which was stated for the torsion volume of \cite{witten1992two}, 
to translate it in the symplectic picture : this only consists in adding the term $\Delta(t)$. 
Let us first remark that considering $\U(n)$-valued connection instead of $\SU(n)$-valued connection does not change the volume. 
Indeed, suppose that $\mathcal{M}$ corresponds to a surface of genus $g$ with $p$ boundary components. 
Then, for any $\alpha_1,\ldots,\alpha_{p}\in\mathcal{H}_{reg}$,
\begin{align*}
&M_{\U(n)}(\mathcal{M},\alpha_1,\ldots,\alpha_{p})\\
&\simeq \left\{\left((U_i)_{1\leqslant i\leqslant 2g},C_1,\ldots,C_{p}\right) \in \U(n)^{2g}\times\mathcal{O}_{\alpha_1}\times\dots\times 
\mathcal{O}_{\alpha_{p}}\Big\vert\prod_{i=1}^g[U_{2i-1},U_{2i}]=\prod_{i=1}^{p}C_i\right\}/ \U(n) \ ,
\end{align*} 
where $\U(n)$ acts diagonally by conjugation. 
Since $\det\prod_{i=1}^g[U_{2i-1},U_{2i}]=1$, the latter set is non-empty only if $\sum_{i=1}^p\vert \alpha_i\vert\in \mathbb{N}$. 
Next, remark that any conjugacy class of $\SU(n)$ is also a conjugacy class of $\U(n)$ and there is a natural action of $\mathbb{R}$ on $\mathcal{H}_{reg}$ given by 
$$t\cdot (\theta_1>\ldots >\theta_n)=std(\theta_i+t\mod \mathbb{Z}),$$
where $std(x_1,\ldots,x_n)$ denotes the standardization $(x_{i_1}>x_{i_2}>\ldots>x_{i_n})$. For $\alpha\in\mathcal{H}_{reg}$, set $t(\alpha)=\vert \alpha\vert\mod \mathbb{Z}$ 
and $\hat{\alpha}=(-t_{\alpha}/n)\cdot \alpha$, so that $\hat{\alpha}\in \mathcal{H}^0_{reg}$.
Since $Z(\U(n))$ acts trivially by conjugation, when $\alpha_{1},\ldots \alpha_{p}\in \mathcal{H}_{reg}$ we have
\begin{align*}
&M_{\U(n)}(\mathcal{M},\alpha_1,\ldots,\alpha_{p})\\
&\simeq \left\{((U_i)_{1\leqslant i\leqslant 2g},C_1,\ldots,C_{p})\in \U(n)^{2g}\times\mathcal{O}_{\alpha_1}\times\dots\times 
\mathcal{O}_{\alpha_{p}}\Big\vert\prod_{i=1}^g[U_{2i-1},U_{2i}]=\prod_{i=1}^{p}C_i\right\}/ \SU(n)\\
&\simeq \mathbb{T}^{2g}\times\left\{((U_i)_{1\leqslant i\leqslant 2g},C_1,\ldots,C_{p})\in \SU(n)^{2g}\times\mathcal{O}_{\hat{\alpha}_1}\times\dots\times 
\mathcal{O}_{\hat{\alpha}_{p}}\Big\vert\prod_{i=1}^g[U_{2i-1},U_{2i}]=\prod_{i=1}^{p}C_i\right\}/ \SU(n)\\
&\simeq \mathbb{T}^{2g}\times M_{\SU(n)}(\mathcal{M},\hat{\alpha}_1,\ldots,\hat{\alpha}_{p}),
\end{align*} 
so that with the convention that $\Vol(\mathbb{T})=1$, 
\begin{equation}\label{eq:SU_n_equal_Un}
\Vol \left[M_{\U(n)}\left(\mathcal{M},\alpha_1,\ldots,\alpha_{p}\right)\right] = 
\Vol \left[M_{\SU(n)}(\mathcal{M}, \hat{\alpha}_1,\ldots,\hat{\alpha}_{p})\right] .
\end{equation}
We then derive the following proposition. Recall that for $\theta = (\theta_1 > \dots > \theta_n) \in \mathcal{H}_{reg}$, 
we denote by $\tilde{\theta}$ the element of $\mathcal{H}_{reg}$ given by 
\begin{equation*}
    \tilde{\theta} = (1 - \theta_n > \dots > 1 - \theta_1) \ .
\end{equation*}

\begin{proposition}\label{prop:volume_formula_flat}
Suppose that $\alpha_1,\ldots,\alpha_p\in \mathcal{H}_{reg}^0$. If either $p\geqslant 3$ or $(p,g(\mathcal{M}))=(1, 1)$ 
or $g(\mathcal{M})\geqslant 2$, then, if $\widehat{\mathcal{M}}$ is disconnected, 
\begin{align*}
\Vol(M(\mathcal{M},\alpha_1,\ldots,\alpha_p)) = 
\frac{1}{n}\int_{\mathcal{H}_{reg}^0} 
\Vol(M(\widehat{\mathcal{M}},\alpha_1,\ldots,\alpha_p,\theta,\tilde{\theta}))\Delta(\theta)^2 \diff \theta \ ,
\end{align*}
and, if $\widehat{\mathcal{M}}$ is connected,
\begin{align*}
\Vol(M(\mathcal{M},\alpha_1,\ldots,\alpha_p))=\int_{\mathcal{H}_{reg}}\Vol(M(\widehat{\mathcal{M}}, 
\alpha_1,\ldots,\alpha_p,\theta,\tilde{\theta}))\Delta(\theta)^2 \diff \theta \ .
\end{align*}
\end{proposition}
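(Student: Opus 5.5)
The plan is to derive Proposition \ref{prop:volume_formula_flat} from Theorem \ref{thm:contraction_formula_connection} by a change of variables from the alcove $\mathcal{A}$ to $\mathcal{H}^0_{reg}$ (or $\mathcal{H}_{reg}$).

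\textbf{Step 1: the hypotheses.} First we check that Theorem \ref{thm:contraction_formula_connection} applies. The conditions $p\geqslant 3$, or $(p,g)=(1,1)$, or $g\geqslant 2$, together with regularity of the $\alpha_i$, guarantee that $M(\mathcal{M},\alpha_1,\ldots,\alpha_p)$ contains a connection with stabilizer $Z(\SU(n))$. For generic holonomies this is the standard statement that the action on the representation variety is generically free modulo the center, and we would cite \cite{Meinrenken_Woodward} for it. We also note that the boundary strata of $\mathcal{A}\setminus\mathcal{A}_{reg}$ have measure zero, so we may integrate over $\mathcal{A}_{reg}$ only.

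\textbf{Step 2: the change of variables.} Use the piecewise affine, volume-preserving bijection $\phi:\mathcal{A}_{reg}\to\mathcal{H}^0_{reg}$ from \eqref{eq:change_parametrization_conjugacy}, with $\phi^*\diff\theta=\diff t$. We must check three things.
\begin{enumerate}
\item $\phi(-t)=\widetilde{\phi(t)}$ as conjugacy classes, since both are the class of $e^{-2\pi i t}$, the inverse of $e^{2\pi i t}$. Its eigenvalue parameters are exactly $1-\theta_n>\dots>1-\theta_1$ up to reduction mod $\mathbb{Z}$.
\item $\Delta(t)^2=\Delta(\phi(t))^2$. The quantity $\prod_{i<j}\sin^2(\pi(x_i-x_j))$ depends only on the multiset of $e^{2\pi i x_j}$, hence only on the conjugacy class.
\item The integrand $\Vol(M(\widehat{\mathcal{M}},\ldots))$ depends only on conjugacy classes.
\end{enumerate}
This gives the disconnected case directly, with the factor $1/k=1/n$.

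\textbf{Step 3: the connected case.} Here the formula is stated with $\U(n)$ and $\mathcal{H}_{reg}$, and without the $1/n$. We use \eqref{eq:SU_n_equal_Un} to pass between $\U(n)$ and $\SU(n)$ volumes. Decompose $\mathcal{H}_{reg}$ by the value $r=|\theta|\bmod\mathbb{Z}$ and apply the $\mathbb{R}$-action $\theta\mapsto(-r/n)\cdot\theta$, which maps each fibre $\mathcal{H}^r_{reg}$ volume-preservingly onto $\mathcal{H}^0_{reg}$. The intermediate pair $(\theta,\tilde\theta)$ has total $|\theta|+|\tilde\theta|\in\mathbb{Z}$, so the constraint $\sum|\alpha_i|\in\mathbb{N}$ is preserved for every $r$.

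Only the shifts $r\in\{0,1/n,\dots,(n-1)/n\}$ can map onto a central translate. Each of these shifts corresponds to multiplying the glued holonomy by a central element $\zeta\in Z(\SU(n))$, which gives a symplectomorphic moduli space. Summing the $n$ central translates therefore cancels the $1/n$ up to the torus factor, and the remaining fibre integral over $r\in[0,1)$ contributes $\Vol(\mathbb{T})=1$. This accounts for the extra circle of freedom $\U(1)$ of the glued holonomy, which appears in the connected case as the extra $\mathbb{T}$ factor in $\mathbb{T}^{2g}$.

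\textbf{Main obstacle.} The hard part is this bookkeeping of the center and of the $\mathbb{T}^{2g}$ factors between the $\U(n)$ and $\SU(n)$ pictures. If the direct argument gets messy, the first thing to try is to verify the constants on the simplest cases, namely $(g,p)=(1,1)$ obtained from $(0,3)$, and fix the normalization from those.
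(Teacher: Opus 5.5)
Your Steps 1 and 2 match the paper's proof. The paper cites \cite[Thm.~5.20]{bismut5symplectic}, rather than \cite{Meinrenken_Woodward}, for the existence of a connection with stabilizer $Z(\SU(n))$ for regular $\alpha_i$. It then changes variables by $\theta=\phi(t)$, using $\phi(-t)=\widetilde{\phi(t)}$ and unit Jacobian. These two steps already give $\frac{1}{k}\int_{\mathcal{H}^0_{reg}}\cdots\,\diff\theta$ in both cases.

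The gap is in Step 3. In the connected case Theorem~\ref{thm:contraction_formula_connection} gives $k=1$, so after Step 2 there is no factor $1/n$ to cancel. Summing over the $n$ central translates, as you propose, would introduce a spurious factor $n$. What remains is only the identity
\[
\int_{\mathcal{H}^0_{reg}} f(\theta)\,\diff\theta=\int_{\mathcal{H}_{reg}} f(u)\,\diff u,
\qquad f(u)=\Vol\bigl(M_{\U(n)}(\widehat{\mathcal{M}},\alpha_1,\ldots,\alpha_p,u,\tilde u)\bigr)\Delta(u)^2 .
\]
Shifts by $Z(\SU(n))$ do not suffice for this. You need $f$ to be constant along the whole orbit $t\mapsto t\cdot\theta$, $t\in\mathbb{R}$. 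Without that, the fibre integral over $r\in[0,1)$ does not reduce to its value at $r=0$, and your claim that it ``contributes $\Vol(\mathbb{T})=1$'' is unsupported. The factor $\mathbb{T}^{2g}$ in \eqref{eq:SU_n_equal_Un} plays no role here; it has volume $1$ by convention.

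The missing observation is elementary. Since $\widetilde{t\cdot\theta}=(-t)\cdot\tilde\theta$, replacing $\theta$ by $t\cdot\theta$ multiplies the two glued boundary holonomies by $e^{2\pi i t}$ and $e^{-2\pi i t}$. These scalars cancel in the defining product relation. Hence $M_{\U(n)}(\widehat{\mathcal{M}},\alpha,t\cdot\theta,\widetilde{t\cdot\theta})\cong M_{\U(n)}(\widehat{\mathcal{M}},\alpha,\theta,\tilde\theta)$ for every real $t$, not only $t\in\frac1n\mathbb{Z}$. For $\theta\in\mathcal{H}^0_{reg}$ the latter has the $\SU(n)$ volume by \eqref{eq:SU_n_equal_Un}, and $\Delta(t\cdot\theta)^2=\Delta(\theta)^2$.

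With this invariance your fibration works. The map $(\theta,r)\mapsto(r/n)\cdot\theta$ is a bijection $\mathcal{H}^0_{reg}\times[0,1)\to\mathcal{H}_{reg}$, with Jacobian $1$ in the coordinates $(\theta_1,\ldots,\theta_{n-1},r)$. Therefore $\int_{\mathcal{H}_{reg}}f\,\diff u=\int_0^1\int_{\mathcal{H}^0_{reg}}f\,\diff\theta\,\diff r=\int_{\mathcal{H}^0_{reg}}f\,\diff\theta$. The paper does the same with $t=r/n\in[0,1/n)$ and Jacobian $n$. Your fallback of calibrating the constants on $(g,p)=(1,1)$ is not a substitute for this argument.
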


\begin{proof}
First, by \autocite[Thm. 5.20]{bismut5symplectic}, $M_{g,n}(\alpha_1,\ldots,\alpha_p)$ contains at least one element 
for which the stabiliser under the diagonal action of $\SU(n)$ is $Z(\SU(n))$ if $\alpha_1,\ldots,\alpha_p\in\mathcal{H}_{reg}^0$ 
and either $p\geqslant 3$, $p=1$ and $g(\mathcal{M})=1$, or $g(\mathcal{M})\geqslant 2$.
Remark that the volume form $\diff t$ on $\mathcal{A}$ introduced in the previous subsection is such that 
$$\Vol\left(\left\{(t_1,\ldots,t_n)\in \mathbb{R}^n \, \Big\vert \, \sum_{i=1}^nt_i=0, \max_{1\leqslant i,j\leqslant n}t_i-t_j<1\right\}\right)=1 \ .$$
By \eqref{eq:change_parametrization_conjugacy}, we have that $\phi(-t)=\widetilde{\phi(t)}$ and $Jac(\phi(t))=1$ for all $t\in\mathcal{A}$. 
Hence, doing the change of variable $\theta=\phi(t)$ yields
$$\Vol(M(\mathcal{M},\alpha_1,\ldots,\alpha_p))=\frac{1}{k}\int_{\mathcal{H}_{reg}^0}
\Vol(M(\widehat{\mathcal{M}},\alpha_1,\ldots,\alpha_p,\theta,\widetilde{\theta}))\Delta(\theta)^2 \diff \theta,$$
where $k=1$ if $\widehat{\mathcal{M}}$ is connected and $k=n$ otherwise. 
It remains to replace the integration on $\mathcal{H}^0_{reg}$ by the integration on $\mathcal{H}_{reg}$ in the case where $\widehat{\mathcal{M}}$ is connected. 
For all $t\in [0,1/n)$, $\alpha_1,\ldots,\alpha_p\in \mathcal{H}_{reg}^0$ and $\theta\in \mathcal{H}_{reg}^0$, by \eqref{eq:SU_n_equal_Un}
$$\Vol\left[ M_{\U(n)}(\widehat{\mathcal{M}},\alpha_1,\ldots,\alpha_p,t\cdot\theta,\widetilde{t\cdot\theta})\right]  = 
\Vol \left[M_{\SU(n)}\left(\widehat{\mathcal{M}},\alpha_1,\ldots,\alpha_p,\theta,\widetilde{\theta}\right)\right] . $$
Therefore, if $\hat{\mathcal{M}}$ is connected,
\begin{align*}
&\frac{1}{k}\int_{\mathcal{H}_{reg}^0}\Vol(M(\widehat{\mathcal{M}},\alpha_1,\ldots,\alpha_p,\theta,\widetilde{\theta}))\Delta(\theta)^2 \diff \theta\\
&\hspace{2cm}=n\int_{0}^{1/n}\left(\int_{\mathcal{H}_{reg}^0}\Vol(M_{\U(n)}(\widehat{\mathcal{M}},\alpha_1,\ldots,\alpha_p,t\cdot\theta, 
\widetilde{t\cdot\theta}))\Delta(\theta)^2d\theta\right) \diff t\\
&\hspace{2cm}=\int_{\mathcal{H}_{reg}}\Vol(M_{\U(n)}(\widehat{\mathcal{M}},\alpha_1,\ldots,\alpha_p,u,\widetilde{u}))\Delta(u)^2 \diff u \ ,
\end{align*}
where we use that the change of variable $(u_1,\ldots,u_n)=
(\theta_1+t,\ldots,\theta_{n-1}+t,-\sum_{i=1}^{n-1}\theta_i+t)=:\phi(\theta_1,\ldots,\theta_{n-1},t)$ yields $Jac(\phi)=n$.
\end{proof}

\subsection{Proof of Theorem \ref{th:Z_g_p_0}}
\label{subsec:proof_th_flat_co}

The proof of Theorem \ref{th:Z_g_p_0} is then a deduction from the previous results and the previous construction on differential structures.

\begin{proof}[Proof of Theorem \ref{th:Z_g_p_0}]
The proof is done by induction on $N=3g+p$, where $N\geqslant 3$. If $N=3$, the condition 
$p + 2g-2 \geqslant 1$ implies that $p=3$ and $g=0$. The surface is therefore the three-holed sphere and 
the result is given by Theorem \ref{th:volume_flat_connection_0_3}. 
Suppose $N>3$ and let $S$ be a surface of genus $g$ with $p$ points removed. 
Let $\mathcal{T}$ be a surface constructed in Section \ref{subsec:(g,p)_honey}. 
Then $\mathcal{T}$ is obtained either by gluing two edges of a connected surface $\mathcal{T}'$ or by gluing one edge of a connected surface 
$\mathcal{T}'$ to the edge of an equilateral triangle $T$. 
\\
In the first case, $\mathcal{T}'$ is a flat surface associated to a surface $\mathcal{M}'$ with genus $g-1$ and $p+2$ points removed. 
Let $\alpha_1,\ldots,\alpha_p\in\mathcal{H}_{reg}$ with $\sum_{i=1}^p\vert\alpha_{i}\vert\in \mathbb{N}$. 
By applying \eqref{eq:SU_n_equal_Un} and Theorem \ref{prop:volume_formula_flat}, we have 
\begin{align*}
Z_{g,p}(\alpha_1,\dots,\alpha_p)=Z_{g,p}(\hat{\alpha}_1,\dots,\hat{\alpha}_p) = 
&\int_{\mathcal{H}_{reg}}\Vol(M(\mathcal{M}',\hat{\alpha}_1,\ldots,\hat{\alpha}_p,\theta,\tilde{\theta}))\Delta(\theta)^2 \diff \theta \\
=&\int_{\mathcal{H}_{reg}}\Vol(M(\mathcal{M}',\alpha_1,\ldots,\alpha_p,\theta,\tilde{\theta}))\Delta(\theta)^2\diff \theta \ .
\end{align*}
Let us denote by $c_{g, p} = \frac{c_{0,3}^{N}}{n^{2g+p-3}}$ where $c_{0, 3}$ is given in Theorem \ref{th:Z_g_p_0}. 
Since $3(g-1)+p+2<N$, by induction
$$\Vol(M(\mathcal{M}',\alpha_1,\ldots,\alpha_p,\theta,\tilde{\theta})) = 
\frac{c_{g-1,p+2}}{\Delta(\theta)\Delta(\tilde{\theta}) \prod_{j=1}^p\Delta(\alpha_j)}\sum_{\substack{G \in \mathcal{G}^{(g-1,p+2)}}}
        \Vol \left[ \honey^G_{\mathcal{T'}}(\alpha_1, 
        \dots, \alpha_p,\theta,\tilde{\theta}) \right],$$
and thus by Proposition \ref{prop:formula_contracting_honey},
\begin{align*}
Z_{g,p}(\alpha_1,\dots,\alpha_p)=
&\frac{c_{g-1,p+2}}{\prod_{j=1}^{p}\Delta(\alpha_j)}
\int_{\mathcal{H}_{reg}}\sum_{\substack{G \in \mathcal{G}^{(g-1,p+2)}}}
        \Vol \left[ \honey^G_{\mathcal{T'}}(\alpha_1, 
        \dots, \alpha_p,\theta,\tilde{\theta})\right]\frac{\Delta(\theta)^2}{\Delta(\theta)\Delta(\tilde{\theta})} \diff \theta\\
        =&\frac{c_{g-1,p+2}}{\prod_{j=1}^{p}\Delta(\alpha_j)}\sum_{\substack{G \in \mathcal{G}^{(g,p)}}}
        \Vol \left[ \honey^G_{\mathcal{T}}(\alpha_1, 
        \dots, \alpha_p) \right],
 \end{align*}
where we used the fact that $\Delta(\theta)=\Delta(\tilde{\theta})$ on the second equality. 
Since $c_{g-1, p+2} = c_{g, p}$, one gets the result.
In the second case, $\mathcal{T}$ is obtained by gluing a 
surface $\mathcal{M}'$ with genus $g$ and $p-1$ points removed and a triangle $T$ associated to a three-holed sphere. 
Let $\widehat{M}=\mathcal{M'}\cup T$ be the corresponding disconnected surface. Then, by \eqref{eq:SU_n_equal_Un} and Proposition \ref{prop:volume_formula_flat},
\begin{align*}
Z_{g,p}(\alpha_1,\dots,\alpha_p)=
&Z_{g,p}(\hat{\alpha}_1,\dots,\hat{\alpha}_p)\\
=&\frac{1}{n}\int_{\mathcal{H}_{reg}^0}\Vol(M(\mathcal{M}', \hat{\alpha}_1,\ldots,\hat{\alpha}_{p-2},\theta)) 
\Vol(M(T,\tilde{\theta},\hat{\alpha}_{p-1},\hat{\alpha}_p)) \diff \theta.
\end{align*}
Set $s=-\sum_{i=1}^{p-2}\vert \alpha\vert_i$. By \eqref{eq:SU_n_equal_Un},
$$\Vol(M(\mathcal{M}',\hat{\alpha}_1,\ldots,\hat{\alpha}_{p-2},\theta)) = 
\Vol(M(\mathcal{M}',\alpha_1,\ldots,\alpha_{p-2},\theta+s))$$
and
$$ \Vol(M(T,\tilde{\theta},\hat{\alpha}_{p-1},\hat{\alpha}_p))=\Vol(M(T,\tilde{\theta}-s,\alpha_{p-1},\alpha_p)).$$
Since the map $\theta\mapsto \theta-s$ is volume preserving,
\begin{align*}
&\int_{\mathcal{H}_{reg}^0}\Vol(M(\mathcal{M}',\hat{\alpha}_1,\ldots,\hat{\alpha}_{p-2},\theta)) 
\Vol(M(T,\tilde{\theta},\hat{\alpha}_{p-1},\hat{\alpha}_p)) \diff \theta\\
&\hspace{3cm}=\int_{\mathcal{H}_{reg}^{s}}\Vol(M(\mathcal{M}',\alpha_1,\ldots,\alpha_{p-2},\theta))
\Vol(M(T,\tilde{\theta},\alpha_{p-1},\alpha_p))\diff \theta.
\end{align*}
By induction,
$$\Vol(M(\mathcal{M}',\alpha_1,\ldots,\alpha_{p-2},\theta,)) = 
\frac{c_{g,p-1}}{\Delta(\theta)\prod_{i=1}^{p-2}\Delta(\alpha_i)}\sum_{\substack{G \in \mathcal{G}^{(g,p-1)}}}
        \Vol \left[ \honey^G(\alpha_1, 
        \dots, \alpha_{p-2},\theta) \right],$$
 and
 $$\Vol(M(T,\tilde{\theta},\alpha_{p-1},\alpha_p)) = 
 \frac{c_{0,3}}{\Delta(\tilde{\theta})\Delta(\alpha_{p-1}),\Delta(\alpha_{p})}\sum_{\substack{G \in \mathcal{G}^{(0,3)}}}
        \Vol \left[ \honey^G(\tilde{\theta}, \alpha_{p-1},\alpha_{p}) \right],$$
 and thus, by Proposition \ref{prop:formula_gluing_honey},
\begin{align*}
Z_{g,p}(\alpha_1,\dots,\alpha_p)
 =&\frac{c_{g,p-1}c_{0,3}}{\prod_{i=1}^{p}\Delta(\alpha_{i})}\frac{1}{n}\int_{\mathcal{H}_{reg}^s}\sum_{\substack{G_1 \in \mathcal{G}^{(g,p-1)},G_2\in \mathcal{G}^{(0,3)}}}
        \Vol \left[ \honey^{G_1}(\alpha_1, 
        \dots, \alpha_{p-2},\theta)\right]\\
        &\hspace{5cm}\Vol \left[ \honey^{G_2}(\tilde{\theta}, 
        \alpha_{p-1},\alpha_{p})\right]\frac{\Delta(\theta)^2}{\Delta(\theta)\Delta(\tilde{\theta})}d\theta\\
        =&\frac{c_{g,p}}{\prod_{i=1}^{p}\Delta(\alpha_{i})}\sum_{\substack{G \in \mathcal{G}^{(g,p)}}}
        \Vol \left[ \honey^G(\alpha_1, 
        \dots, \alpha_p) \right] 
\end{align*}
since $c_{g,p}=\frac{c_{g,p-1}c_{0,3}}{n}$.
\end{proof}

\section{Yang-Mills partition function on compact oriented surfaces}
\label{sec:proof_positive_area}

The goal of this section is to prove Corollary \ref{cor:yang_mills_g_p}, which provides an explicit volume formula 
for the marginal Yang–Mills partition function on an oriented surface of genus $g$ 
with prescribed non-degenerate holonomies (up to conjugation) 
along a finite collection of disjoint loops. 
As shown in \cite{levy2003yang}, this partition function depends 
only on the prescribed conjugacy classes and on the areas of the 
connected components delimited by the loops. 

To each disjoint loops configuration $\mathcal{L}$, recall the construction of the oriented graph $\mathcal{T}(\mathcal{L})=(V,E)$ from Section \ref{subsec:Yang-mills_result} :
\begin{itemize}
\item the set $V$ of vertices of $\mathcal{T}(\mathcal{L})$ is the set of connected components of $S\setminus \bigcup_{i=1}^p\Gamma_i$. 
Each vertex $v\in V$ is labelled $(A_v,g_v)$ where $A_v$ is the area of the corresponding connected component and $g_v$ is its genus.
\item For $v_1,v_2\in V$, there is a directed edge $e$ from $v_1$ to $v_2$ 
if $v_1$ and $v_2$ are boundary components of a loop, and we write $v=s(e)$. We label the corresponding oriented edge $e$ by $\alpha_e$, with the condition that $\alpha_{\bar{e}}=1-\alpha_e$ if $\bar{e}$ is the oriented edge $(v_2,v_1)$.
\end{itemize}

\noindent
In this section, 
for $T \geqslant 0$ and $x,y \in \mathcal{H}$, let us denote by $k_T(x,y)$ the kernel of the unitary Dyson Brownian motion on $\mathcal{H}$ at time $T$. This kernel has an explicit expression for $x,y\in\mathcal{H}_{reg}$ as 
\begin{equation}\label{eq:kernel_brownian}
k_{T}(x,y)=\frac{\Delta(y)}{\Delta(x)}\det(p_T(2\pi x_i,2\pi y_j),1\leqslant i,j\leqslant n),
\end{equation}
where $p_{T}$ is the heat kernel of the Brownian motion on the circle $\mathbb{R}/2\pi \mathbb{Z}$.

\begin{lemma}[Partition function of a cylinder]
    Let $T > 0$, $\alpha_1\in\mathcal{H},\alpha_2 \in \mathcal{H}_{reg}$. Then, the Yang-Mills partition function associated to a cylinder of volume $T$ with holonomies $\alpha_1,\alpha_2$ on its boundaries is
    \begin{equation*}
        Z_{0, 2, T}(\alpha_1, \alpha_2) =\frac{1}{\Delta(\alpha_2)^2} k_T(\alpha_1,\alpha_2).
    \end{equation*}
\end{lemma}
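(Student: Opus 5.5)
We identify the Yang--Mills partition function of the cylinder with the density of the holonomy around one boundary, conditioned on the holonomy around the other. We then translate this density into the Dyson Brownian motion kernel through the Weyl integration formula. By definition (see \cite{levy2003yang}), the Yang--Mills measure on a cylinder of area $T$ with boundary holonomy prescribed in $\mathcal{O}(\alpha_1)$ is given by the heat kernel on $\U(n)$. Concretely, the holonomy along the second boundary is $g_1^{-1}B_T$, where $B$ is a Brownian motion on $\U(n)$ and $g_1\in\mathcal{O}(\alpha_1)$ is fixed (or Haar-averaged over the conjugacy class). The partition function with holonomies $\alpha_1,\alpha_2$ is then the density of the conjugacy class of $g_1 B_T$ at $\alpha_2$, computed with respect to the reference measure on $\mathcal{H}$ used for flat connections. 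This reference measure is the pushforward convention in which the symplectic volume of the orbit $\mathcal{O}(\alpha_2)$ appears.

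The argument has three steps. First, we write
\[
Z_{0,2,T}(\alpha_1,\alpha_2)=\int_{\mathcal{O}(\alpha_1)} p_T^{\U(n)}(g^{-1}h)\,\diff g,
\]
with $h\in\mathcal{O}(\alpha_2)$, where $p_T^{\U(n)}$ is the heat kernel on $\U(n)$. Here we view the measure on holonomies modulo conjugation as a density against the orbit volume. Second, we apply the Weyl integration formula. Haar measure pushed to $\mathcal{H}$ is $\tfrac{1}{n!}\Delta(\theta)^2\diff\theta$ up to the normalisation fixed in Section \ref{sec:proof_of_th_flat_connections}. Hence the law of the eigenvalue angles of $g_1B_T$ has density $k_T(\alpha_1,\cdot)$ against $\diff\theta$. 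Dividing by the Weyl density shows that the class function $\int_{\mathcal{O}(\alpha_1)}p_T(g^{-1}\cdot)$ evaluated at $\alpha_2$ equals $k_T(\alpha_1,\alpha_2)/\Delta(\alpha_2)^2$, up to the constants absorbed by the volume normalisations. Third, we check the explicit form \eqref{eq:kernel_brownian} via the Karlin--McGregor formula applied to $n$ independent Brownian motions on the circle conditioned not to collide. The Doob $h$-transform by $\Delta$ produces precisely the factor $\Delta(y)/\Delta(x)$. This is consistent with the conjugacy class of unitary Brownian motion being the $\Delta$-transformed killed process.

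The main obstacle is the bookkeeping of normalisations. We must match the volume form on $\U(n)$, the symplectic volume on orbits that replaces the torsion volume via one factor $\Delta$, and the $\diff\theta$ convention. The goal is to obtain exactly $\Delta(\alpha_2)^{-2}$ with no stray $n!$ or powers of $2\pi$. Since $Z_{0,2,T}$ degenerates to $\delta$-type behaviour as $T\to0$, we plan to fix the constants as follows. We check that, as $T\to0$, the formula reduces to the identity kernel $\delta_{\alpha_1}(\alpha_2)/\Delta(\alpha_2)^2$. We also check its compatibility with the gluing rule of Proposition \ref{prop:volume_formula_flat}, which integrates against $\Delta(\theta)^2\diff\theta$. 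This gluing rule forces the weight $\Delta^{-2}$ on the cylinder. The semigroup property of $k_T$ together with this weight then confirms consistency under concatenating cylinders.
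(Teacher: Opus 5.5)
Your route is essentially the paper's, whose proof is a bare citation of \cite[Proposition 4.2.4 and $(5.3)$]{levy2003yang}: the heat-kernel description $Z_{0,2,T}(\alpha_1,\alpha_2)=\int_{\mathcal{O}(\alpha_1)}p_T(g^{-1}h)\,\diff g$, $h\in\mathcal{O}(\alpha_2)$, followed by Weyl's integration formula is the standard derivation behind that citation, provided $p_T$ is the heat kernel relative to the Haar probability measure and $\diff g$ is the invariant probability measure on the orbit (symplectic orbit volumes play no role at $T>0$). The normalisation you defer is in fact exact, so you need neither the $T\to0$ limit nor the gluing check: on the ordered chamber $\mathcal{H}$ with Lebesgue measure $\diff\theta$, Weyl's formula gives density $\prod_{j<k}|e^{2\pi i\theta_j}-e^{2\pi i\theta_k}|^2=\Delta(\theta)^2$ (ordering cancels the $1/n!$ you wrote), whence $k_T(\alpha_1,\alpha_2)=\Delta(\alpha_2)^2Z_{0,2,T}(\alpha_1,\alpha_2)$ exactly; your third step is unnecessary since $k_T$ is defined as the transition kernel, and as written it is not quite right anyway, because the Doob transform by $\Delta$ also produces a factor $e^{cT}$ ($\Delta$ is a Laplacian eigenfunction with nonzero eigenvalue) and for even $n$ the circle Karlin--McGregor kernel must be sign-twisted.
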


\begin{proof}
    This is a consequence of \cite[Proposition 4.2.4 and $(5.3)$]{levy2003yang}.
\end{proof}

\begin{proposition}[Volume formula for Yang-Mills partition function]\label{prop:volume_formula_yang_mills}
    Let $ g \geqslant 0,p\geq 1$ be integers, $(\alpha_1, \dots, \alpha_p) \in \mathcal{H}^p$ 
    and $T > 0$. Assume that loops $\gamma=\{\gamma_1,\ldots,\gamma_p\}$ associated to $\alpha_1, \dots, \alpha_p$ enclose contractible domains $D_1,\ldots,D_p$ and that $S\setminus \bigcup_{i=1}^pD_i$ has volume $T$. Then, the Yang--Mills partition function on $S\setminus \bigcup_{i=1}^pD_i$ with holonomy $\alpha_i$ around $\gamma_i$ is given by 
    \begin{equation*}
        Z_{g,p, T}(\alpha_1, \dots, \alpha_p) 
        =\int_{\mathcal{H}} Z_{g, p}(u_1, \dots, u_p) \, 
        \prod_{i=1}^pk_{\frac{T}{p}}(1-u,\alpha_i)  \prod_{\ell=1}^p\diff u_i  \ .
    \end{equation*}
\end{proposition}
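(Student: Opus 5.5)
The plan is to use the gluing (sewing) property of the Yang--Mills partition function together with the cylinder lemma. Cut $S\setminus\bigcup_{i=1}^pD_i$ along $p$ auxiliary simple loops $\gamma'_1,\ldots,\gamma'_p$, where each $\gamma'_i$ is parallel to $\gamma_i$. This produces $p$ annuli $A_i$, each bounded by $\gamma_i$ and $\gamma'_i$ and each of area $T/p$, together with a surface $S'$ of genus $g$ with $p$ boundary components. We may take $S'$ to have area $\varepsilon$, or in the limit area $0$. The invariance result of \cite{levy2003yang} says the partition function depends only on the areas and the topology, so this choice of decomposition is allowed.

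Next, apply the sewing formula for the Yang--Mills measure along the loops $\gamma'_i$. This formula integrates over the conjugacy class $u_i\in\mathcal{H}$ of the holonomy along $\gamma'_i$, with respect to the Weyl measure $\Delta(u_i)^2\diff u_i$:
\begin{equation*}
Z_{g,p,T}(\alpha_1,\ldots,\alpha_p)=\int_{\mathcal{H}^p} Z_{g,p,\varepsilon}(u_1,\ldots,u_p)\prod_{i=1}^p Z_{0,2,T/p}(1-u_i,\alpha_i)\,\Delta(u_i)^2\diff u_i .
\end{equation*}
The reversed orientation of $\gamma'_i$, seen from the annulus, turns $u_i$ into $1-u_i$, that is, into $\tilde{u}_i$. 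By the cylinder lemma, $Z_{0,2,T/p}(1-u_i,\alpha_i)=k_{T/p}(1-u_i,\alpha_i)/\Delta(\alpha_i)^2$. I expect that the Vandermonde factors rearrange once the normalization conventions of the partition function with fixed boundary holonomies (as in \cite{levy2003yang}) are accounted for: the $\Delta(u_i)^2$ should be absorbed so that the kernel carries exactly the prefactor $\Delta(y)/\Delta(x)$ of \eqref{eq:kernel_brownian}. The remaining work is to check that the powers of $\Delta$ cancel to give the stated integrand.

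Finally, let $\varepsilon\to0$. Atiyah--Bott and Witten \cite{witten1992two} show that the zero-area limit of the Yang--Mills partition function with regular boundary holonomies is the symplectic volume $Z_{g,p}(u_1,\ldots,u_p)$ of the moduli space of flat connections. Convergence of the integral follows by dominated convergence: use the heat-kernel representation $Z_{g,p,\varepsilon}=\sum_\lambda d_\lambda^{2-2g-p}e^{-\varepsilon c_2(\lambda)/2}\prod\chi_\lambda(u_i)$, which is bounded uniformly in $\varepsilon$ for regular $u_i$ when $2g+p\ge 3$ (the case $g=0,p\le2$ is handled by the cylinder lemma directly). Alternatively, one can avoid the limit: split the area entirely into the annuli and use the semigroup property of $k$ together with the fact that $Z_{g,p,0}$ is the flat volume. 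The main obstacle is the bookkeeping of the normalization factors, namely the powers of $\Delta(u_i)$ and the orientation convention $u\mapsto 1-u$, so that the final formula matches exactly. I would settle this first on the case $g=0$, $p=2$, where both sides are known explicitly.
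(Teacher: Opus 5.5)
Your plan is the paper's proof: peel off annuli of area $T/p$ around each $\gamma_i$ using the Markov (sewing) property with the Weyl measure $\Delta(u)^2\diff u$, evaluate each annulus with the cylinder lemma, and identify the zero-area core with the flat volume $Z_{g,p}$. The only differences are cosmetic: the paper cuts one loop at a time and sets the core area to $0$ directly via Sengupta, rather than letting $\varepsilon\to0$. The $\Delta$-bookkeeping you defer is not carried out in the paper either, and it does not work as a cancellation: with the cylinder lemma and sewing measure exactly as stated, $\frac{\Delta(u_i)^2}{\Delta(\alpha_i)^2}k_{T/p}(1-u_i,\alpha_i)=k_{T/p}(\alpha_i,1-u_i)$ by \eqref{eq:kernel_brownian}, the symmetry of $p_T$ and $\Delta(1-u)=\Delta(u)$, so the factors recombine into a kernel with its arguments reversed. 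Matching the displayed integrand therefore amounts to fixing a convention: read both $Z_{g,p}$ and $Z_{g,p,T}$ as densities with respect to $\diff u$ on $\mathcal{H}$ rather than with respect to Haar measure.
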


\begin{proof}
Recall that the partition function is invariant by area preserving diffeomorphisms. Let us introduce a curve $\gamma$ enclosing the curve $\gamma_p$ such that the area of the domain enclosed by $\gamma_p$ and $\gamma$ is $T'$, with $0< T'< T$. Then, by the Markov property of the Yang-Mills partition function, see \cite[Proposition 5.1.2]{levy2003yang} or \cite[Eq. (2.63)]{witten1992two},
$$Z_{g, p, T}(\alpha_1, \dots, \alpha_p)=\int_{\mathcal{H}} Z_{g, p, T-T'}(\alpha_1, \dots, \alpha_{p-1}, u) \, 
        Z_{0, 2, T'}(1-u, \alpha_p) \Delta(u)^2\diff u \ .$$
Choosing $T'=T(1-\frac{1}{p})$ and doing the same for the $p-1$ other curves yields
$$Z_{g, p, T}(\alpha_1, \dots, \alpha_p)=\int_{\mathcal{H}} Z_{g, p}(u_1, \dots, u_p) \, 
        \prod_{i=1}^pk_{\frac{T}{p}}(1-u_i, \alpha_i)\prod_{i=1}^p \diff u_i \ ,$$
where $Z_{g, p}(u_1, \dots, u_p)=Z_{g, p,0}(u_1, \dots, u_p)$ is the 
the volume of flat connections with holonomies $u_1,\ldots,u_p$ on the boundary, see \cite{sengupta2003volume}. 
\end{proof}
\begin{proof}[Proof of Corollary \ref{cor:yang_mills_g_p}]

The proof is done by induction on the number $p$ of loops on $S$. Let $\alpha_p$ be the holonomy around the oriented loop $\gamma_p$ of $S$, and denote by $\tilde{S}$ the surface obtained by cutting $S$ along $\gamma_p$. 

If $\tilde{S}=S_1\sqcup S_2$ is disconnected, with $\gamma_1$ being positively oriented on $S_1$, then by  \cite[Proposition 5.1.3]{levy2003yang}.
$$\operatorname{YM}_{\mathcal{L}}(\alpha_1,\ldots,\alpha_p)=\operatorname{YM}_{\mathcal{L}_{1}}(\alpha_{i_1},\ldots,{i_s},\alpha_p)\operatorname{YM}_{\mathcal{L}_2}(\alpha_{j_1},\ldots,\alpha_{j_s},1-\alpha_p),$$
where $\mathcal{L}_1$ is the loop configuration given by $S_1$ together with the loops $\gamma_{i_1},\ldots,\gamma_{i_r},\gamma_{p}$ which lie on $S_1$, and $\mathcal{L}_2$ is the loop configuration given by $S_2$ and the remaining loops $\gamma_{i_1},\ldots,\gamma_{i_r},\gamma_{p}$ lying on $S_2$. 

If $\tilde{S}$ is connected and $\tilde{\mathcal{L}}$ is the loop configuration on $\tilde{S}$ obtained by keeping the loops $\gamma_1,\ldots,\gamma_{p-1}$ and the two copies $\gamma_p^+,\gamma_p^-$ obtained by cutting along $\gamma_p$, then by  \cite[Proposition 5.4.3]{levy2003yang},
$$\operatorname{YM}_{\mathcal{L}}(\alpha_1,\ldots,\alpha_p)=\operatorname{YM}_{\tilde{\mathcal{L}}}(\alpha_1,\ldots,\alpha_p,1-\alpha_p).$$
Iterating on all the loops yields 
$$\operatorname{YM}_{\mathcal{L}}(\alpha_1,\ldots,\alpha_p) 
        = 
        \prod_{v\in V}Z_{g_v, d_v,T}(\alpha_{e_1^v}, \dots, \alpha_{e_{d_v}^v}) ,$$
        where for each $v\in V$, $e_1^v, \dots, e_{d_v}^v$ are the oriented edges starting from $v$ in the graph $\mathcal{T}(\mathcal{L})$ constructed from $\mathcal{L}$ above. Using Proposition \ref{prop:volume_formula_yang_mills} yields the result.
\end{proof}
\section{Acknowledgments}
Q.F would like to thank Thibaut Lemoine for useful discussions on this paper and related notions. P.T is supported by the Agence Nationale de la Recherche funding ANR CORTIPOM 21-CE40-001.

\printbibliography

\end{document}